\documentclass[12pt,leqno]{article}
\usepackage{amsthm}
\usepackage{titlesec,amsfonts,amsmath}
\usepackage{amssymb,latexsym}
\usepackage{indentfirst}
\usepackage{color}
\usepackage[utf8]{inputenc}
\usepackage{mathrsfs}
\usepackage{bbm}
\usepackage{mathtools}
\usepackage[inline]{enumitem}
\usepackage{mathrsfs}

\usepackage{xcolor}
\usepackage{hyperref}
\hypersetup{
  colorlinks   = true,    
  urlcolor     = blue,    
  linkcolor    = blue,   
  citecolor    = red     
}

\newcommand{\curl}{\operatorname{curl}}

\newcommand{\dive}{\operatorname{div}}
\newcommand{\ep}{\varepsilon}
\newcommand{\zl}{{z_L}}
\newcommand{\ul}{{u_L}}
\newcommand{\wl}{{w_L}}

\newcommand{\hl}{{h_L}}
\newcommand{\Omegal}{{\Omega_L}}

\newcommand{\uep}{u_\varepsilon}
\newcommand{\uepn}{u_{\varepsilon_n}}
\newcommand{\unep}{u^n_\varepsilon}

\newcommand{\wep}{w_\varepsilon}
\newcommand{\wepn}{w_{\varepsilon_n}}
\newcommand{\zepn}{z_{\varepsilon_n}}
\newcommand{\wnep}{w^n_\varepsilon}

\newcommand{\jep}{J_\varepsilon}
\newcommand{\jepn}{J_{\varepsilon_n}}
\newcommand{\al}{\alpha}
\newcommand{\jal}{J_\alpha}

\newcommand{\zep}{z_\varepsilon}
\newcommand\nl[2]{\|#2\|_{L^{#1}}}
\def\dert{\partial_t }
\DeclareMathOperator*{\esssup}{ess\,sup}
 
\newcommand{\dd}{\,{\rm d}}

\newcommand\R{{\mathbb{R}}}
\renewcommand\P{{\mathbb{P}}}

\newcommand{\EE}{\mathbb{E}}

\renewcommand\div{{\rm div\,}}
\newcommand{\el}{\mathcal{E}_L}

\newtheorem{theorem}{Theorem}[section]
\newtheorem{propappendix}{Proposition}[section]
\newtheorem{proposition}[theorem]{Proposition}
\newtheorem{lemma}[theorem]{Lemma}
\newtheorem{corollary}[theorem]{Corollary}
\theoremstyle{definition}
\newtheorem{definition}[theorem]{Definition}

\theoremstyle{remark}
\newtheorem{remark}[theorem]{Remark}
\numberwithin{equation}{section}

\usepackage{framed,color}
\definecolor{shadecolor}{rgb}{0.9,0.9,0.9}

\renewcommand{\P}{\mathbb{P}}

\title{Asymptotic profiles and large-time behavior for 3D micropolar fluid equations with possibly vanishing spin viscosity}

\author{L. Brandolese, P. Braz e Silva, A. V. Busuioc, \\ D. Iftimie and C. F. Perusato}

\date\today

\begin{document}

\maketitle

\tableofcontents

\begin{abstract}
We consider 3D micropolar flows with possible vanishing spin viscosity and investigate the decay of the energy for large times. We compute first the exact $L^2$-asymptotic profile, as $t\to+\infty$, for solutions to the linear 3D micropolar equations, up to the second order. For the nonlinear micropolar system, we first establish the existence of restricted Leray solutions. This new notion of solutions is required because it is not known whether the weak finite energy solutions verify a strong energy inequality. Next, we study the large-time behavior of restricted Leray solutions, and prove that they behave asymptotically in $L^2$ like their linear counterpart, up to the critical algebraic decay rate $O(t^{-5/2})$ for the energy. Applying a remarkable linear enstrophy identity, we show that the microrotation field exhibits faster decay in $L^2$ than the velocity field, allowing us to impose our hypothesis on the velocity field only and not on the angular velocity.

\medskip
\noindent
\textbf{Keywords:} Micropolar, Energy decay, Enstrophy, Restricted weak solution, Navier-Stokes.

\noindent
\medskip
\textbf{MSC 2010 classification:} 76D99, 35B40, 35C20, 74A35.

\end{abstract}

\section{Introduction}

In this paper, we investigate the large-time behavior of solutions to the three-dimensional micropolar fluid equations. We consider the nonlinear system, including the case of vanishing spin viscosity, which is physically relevant, as discussed in \cite{brandolese_2d_2024}. This situation is particularly challenging, as it leads to a loss of regularizing effects in the microrotation equation. As we shall see below, the 3D case considered here is quite more complicated than the 2D case studied in \cite{brandolese_2d_2024} for at least two reasons: the lack of an enstrophy identity for the nonlinear system (see \cite[Section 3]{brandolese_2d_2024}) and the lack of a strong energy inequality (see \cite[Proposition 2.5]{brandolese_2d_2024}). We will discuss this in detail later.

The micropolar fluids equations model a class of fluids where the microstructure is taken into account. Physically, they may represent fluids where the rotation of suspended micro-particles is considered, and can be used to model polymeric suspensions, liquid crystals, and blood flow, for example. They were introduced by Eringen in \cite{Eringen}, and are a generalization of the classical Navier-Stokes equations. Details on the derivation of the model may also be seen in \cite{Condiff-Dahler} and \cite{Lukaszewicz}. In \cite{Lukaszewicz}, there is also a detailed mathematical analysis of the model, including many existence and uniqueness results for both stationary and non-stationary problems for such models.  

The system describing the motion of micropolar fluids in the whole space $\R^3$ is  
\begin{equation}
\label{MP}
\left\{
\begin{aligned}
&\dert u+u\cdot \nabla u+\nabla p=(\mu+\chi)\Delta u+2\chi\curl  w, \qquad 
x\in\R^3, \;t>0 , \\
&\dert w+u\cdot\nabla w=\gamma\Delta w+\kappa\nabla(\dive  w)+2\chi\curl  u-4\chi w, \\
&\dive  u=0,  
\end{aligned}
\right.
\end{equation}
 where $u$ is the translational velocity and $w$ is the angular velocity of the micro-particles in the fluid. The coefficients $\mu, \chi > 0$ denote the kinematic and vortex viscosities, respectively, while $\kappa,\gamma \geq 0$ denote the gyroviscosity and the spin viscosity, respectively. These viscosities are assumed to be constant throughout this work.
The initial data associated with system \eqref{MP} are denoted, as usual, by $u_0$ and $w_0$.
 
The three-dimensional problem differs in an essential way from the planar case. In 3D, the microrotation field no longer has a fixed direction, so the planar enstrophy identity used in the 2D analysis \cite{brandolese_2d_2024} (which is crucial in the spin-viscosity-free regime $\gamma = 0$) is no longer available. The lack of an enstrophy-like identity prevents the use of the monotonicity argument \cite{GuterresNichePerusatoZingano2023, ZinganoJMFM} which was quite useful in the two-dimensional setting, as shown in \cite{brandolese_2d_2024}. The analysis here requires a new notion of solution to deal with the lack of a strong energy inequality and combines a refined Fourier splitting argument with the damping mechanism in the $w$-equation which compensates for the absence of diffusion when $\gamma =0$.

In our first result, we obtain the precise behavior of the \emph{linear} micropolar system
\begin{equation}\label{LMP1}
\left\{
\begin{aligned}
&\dert \ul =(\mu+\chi)\Delta \ul+2\chi \curl  \wl, \\
&\dert \wl =\gamma\Delta \wl+\kappa\nabla(\dive   \wl)+2\chi\curl  \ul-4\chi \wl,\\
&\dive \ul=0.
\end{aligned}
\right.
\qquad 
x\in\R^3, \;t>0
\end{equation}
We observe that the absence of the pressure in the equation of $\ul$ is due to the fact that all the terms in that equation are divergence free. We will prove in Section \ref{linearsect} the following result. 
\begin{theorem}
\label{th:linear-pro}
Assume $\gamma\ge0$.
The solution $(\ul,\wl)$ of the linear problem~\eqref{LMP1} with initial data $(u_0,w_0)\in L_\sigma^2(\R^3)\times L^2(\R^3)$ has the following asymptotic behavior in $L^2$ as $t\to+\infty$:
\begin{equation*}
\big\|\ul-e^{\mu t\Delta}u_0-\frac12\curl e^{\mu t\Delta}w_0\big\|_{L^2}\leq \frac Ct \|u_0\|_{L^2}+\frac C{t^{\frac32}} \|\P w_0\|_{L^2},
\qquad\forall t\geq1
\end{equation*}
and 
\begin{equation*}
\big\|\wl-\frac12\curl e^{\mu t\Delta}u_0+\frac14\Delta e^{\mu t\Delta}\P w_0\big\|_{L^2}\leq\frac C{t^{\frac32}} \|u_0\|_{L^2}+\frac C{t^2} \|w_0\|_{L^2},
\qquad\forall t\geq1
\end{equation*}
for some constant $C$ depending only on the material coefficients.
\end{theorem}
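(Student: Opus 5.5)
The plan is to work entirely in Fourier variables, using the Helmholtz splitting of the microrotation $w=\P w+(I-\P)w$ and the identity $\curl\curl=-\Delta$ on divergence-free fields.

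\textbf{Reduction.} Since $\curl$ annihilates gradients, the gradient part $\mathcal Q\wl:=(I-\P)\wl$ decouples. It solves
$$\dert \mathcal Q\wl=(\gamma+\kappa)\Delta\mathcal Q\wl-4\chi\mathcal Q\wl,$$
so $\|\mathcal Q\wl(t)\|_{L^2}\le e^{-4\chi t}\|w_0\|_{L^2}$, which is harmless for both estimates. The pair $(\ul,\P\wl)$ solves a closed system of divergence-free fields. On $\{\xi\cdot\hat v=0\}$ the symbol of $\curl$ is the self-adjoint map $v\mapsto i\xi\times v$, whose square is $|\xi|^2$. It has eigenvalues $s=\pm r$, with $r=|\xi|$, on the two helical directions.

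Projecting on these helical components reduces everything, for each $\xi$, to the scalar $2\times2$ systems
$$\frac{d}{dt}\begin{pmatrix}a\\ b\end{pmatrix}=A_s(r)\begin{pmatrix}a\\ b\end{pmatrix},\qquad A_s(r)=\begin{pmatrix}-(\mu+\chi)r^2 & 2\chi s\\ 2\chi s & -\gamma r^2-4\chi\end{pmatrix},\qquad s=\pm r.$$
Since $s^2=r^2$ corresponds to $-\Delta$ and $s$ to $\curl$, every expansion in powers of $s$ can be translated back into operators. For instance, $\frac{s^2}{4}\hat b$ is the symbol of $-\frac14\Delta\P w_0$, which explains the sign in the second estimate.

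\textbf{Low frequencies ($r\le\delta$).} The trace of $A_s$ is about $-4\chi$ and its determinant is
$$4\chi\mu r^2+\gamma(\mu+\chi)r^4>0,$$
so the eigenvalues are real and distinct for small $r$:
$$\lambda_1=-\mu r^2+O(r^4),\qquad \lambda_2=-4\chi+O(r^2).$$
I will write $e^{tA_s}=e^{t\lambda_1}\Pi_1+e^{t\lambda_2}\Pi_2$ with spectral projectors analytic in $(r,s)$. The $\lambda_2$ part is $O(e^{-ct})$.

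Expanding the slow eigenvector $(1,\,s/2+O(r^3))$ and its projector gives:
\begin{itemize}
\item $a$-component of $\Pi_1(a,b)$: $a+\frac s2 b+O(r^2)|a|+O(r^3)|b|$;
\item $b$-component of $\Pi_1(a,b)$: $\frac s2 a+\frac{s^2}{4}b+O(r^3)|a|+O(r^4)|b|$.
\end{itemize}
I will then replace $e^{t\lambda_1}$ by $e^{-\mu tr^2}$, at the cost of
$$|e^{t\lambda_1}-e^{-\mu tr^2}|\le Ctr^4e^{-ctr^2}.$$
Each remainder is controlled by $\sup_r r^k e^{-ctr^2}\le Ct^{-k/2}$ and Plancherel. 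The error $O(r^2)$ in the velocity gives $t^{-1}\|u_0\|_{L^2}$. The term $r^3$ gives $t^{-3/2}\|\P w_0\|_{L^2}$ for the velocity and $t^{-3/2}\|u_0\|_{L^2}$ for the microrotation. The term $r^4$, including the $tr^4\cdot r^2$ type terms, gives $t^{-2}\|w_0\|_{L^2}$. The leading terms reassemble exactly into $e^{\mu t\Delta}u_0+\frac12\curl e^{\mu t\Delta}w_0$ and $\frac12\curl e^{\mu t\Delta}u_0-\frac14\Delta e^{\mu t\Delta}\P w_0$. Since the profile terms are themselves $O(e^{-ct})$ on $r\ge\delta$, cutting them off there costs nothing.

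\textbf{High frequencies ($r\ge\delta$), the main obstacle.} Here I need a bound $|e^{tA_s(r)}|\le Ce^{-ct}$ that is uniform in $r\ge\delta$, including when $\gamma=0$. The plain energy form only gives $\mathrm{Re}\langle A_sv,v\rangle\le-\mu r^2|a|^2$, which is not coercive in $b$.

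The first thing to try is a modified Lyapunov functional
$$E=|a|^2+|b|^2-\varepsilon\,\mathrm{Re}\Big(\frac{s}{r}\,a\bar b\Big)\,\rho(r),$$
with a suitable weight $\rho$, chosen so that the cross term produces $-\varepsilon c(|a|^2+|b|^2)$ while staying equivalent to $|a|^2+|b|^2$ uniformly in $r\ge\delta$ (and in $\gamma\ge0$ fixed).

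As a fallback, I can argue directly on the matrix. On a compact range $\delta\le r\le R$, continuity and $\mathrm{Re}\,\lambda_j<0$ give the bound, using a uniform resolvent/Jordan argument to handle possible eigenvalue coalescence. For $r\ge R$, the eigenvalues separate as roughly $-(\mu+\chi)r^2$ and $-\gamma r^2-4\chi+O(1)$, and the projectors are uniformly bounded.

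Either way, the high frequencies contribute $Ce^{-ct}(\|u_0\|_{L^2}+\|w_0\|_{L^2})$, which is dominated by every right-hand side for $t\ge1$.
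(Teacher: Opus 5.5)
Your proposal is correct, and it reaches the estimates by a different route from the paper's. The paper works in the vorticity variables $(\Omega_L,h_L)$ and takes the $2\times2$ matrix exponential explicitly from the Putzer formula. It then imports from the two-dimensional paper the pointwise multiplier bounds of Proposition~\ref{prop-e}, which control $E_{1,1}-e^{-\mu t|\xi|^2}$, $E_{2,1}-\frac12e^{-\mu t|\xi|^2}$ and $E_{2,2}-\frac{|\xi|^2}{4}e^{-\mu t|\xi|^2}$. It concludes with the same Plancherel argument and bound $\sup_\xi|\xi|^ke^{-ct|\xi|^2}\lesssim t^{-k/2}$ that you use, and it disposes of the gradient part of $\wl$ by exponential decay, as you do.

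You instead derive the content of Proposition~\ref{prop-e} yourself. The helical splitting turns $i\xi\times$ into the scalar $s=\pm|\xi|$. Analytic perturbation at $r=0$ then identifies the profile as the contribution of the slow eigenpair $\lambda_1=-\mu r^2+O(r^4)$, $v_1=(1,\tfrac s2+O(r^3))$. Your expansions of $\Pi_1$, and the resulting powers of $t$ for each remainder, check out. This gives a self-contained argument that explains where the coefficients $1,\frac12,\frac14$ come from, and it would produce higher-order profiles mechanically. The paper's route is much shorter given the earlier work, and it keeps the explicit $E_{i,j}$, which it reuses for the kernel bounds on $K$.

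The high-frequency step you flag as the main obstacle is actually immediate, and neither the weighted Lyapunov functional nor a Jordan/resolvent argument is needed.
\begin{itemize}
\item In your velocity variables $A_s(r)$ is real symmetric. This reflects the symmetry of $\curl$, which makes the linear micropolar operator self-adjoint; the paper's vorticity matrix is only similar to a symmetric one. Hence $\|e^{tA_s(r)}\|=e^{t\lambda_+(r)}$, where $\lambda_+(r)$ is the largest eigenvalue.
\item The trace is negative and the determinant $4\chi\mu r^2+\gamma(\mu+\chi)r^4$ is positive, so $\lambda_+(r)<0$ for every $r>0$.
\item The discriminant $((\mu+\chi-\gamma)r^2-4\chi)^2+16\chi^2r^2$ is positive, so the eigenvalues never coalesce.
\item $\lambda_+$ is continuous, and as $r\to\infty$ it tends to $-4\chi\mu/(\mu+\chi)$ when $\gamma=0$ and to $-\infty$ when $\gamma>0$. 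Therefore $\sup_{r\ge\delta}\lambda_+(r)<0$ uniformly, including in the case $\gamma=0$.
\end{itemize}
Symmetry also makes $\Pi_1=v_1v_1^{T}/|v_1|^2$ an orthogonal projector, which gives your low-frequency expansions in one line.
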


Our next result deals with the full nonlinear system~\eqref{MP} when the initial data $u_0$ and $w_0$ are square integrable. Let us comment first on the existence of the solutions of \eqref{MP} under this assumption.  The standard energy estimate for \eqref{MP}, see relation \eqref{enerzero}, together with a compactness argument implies the existence of the so-called Leray solutions of \eqref{MP}. These are finite energy solutions satisfying \eqref{MP} in the sense of the distributions (one needs to write the nonlinear term  $u\cdot \nabla w$ as $\dive  (u\otimes w)$) and such that the weak energy inequality \eqref{enerzero} holds true. Unfortunately, the weak energy inequality \eqref{enerzero} does not suffice to obtain the decay of the energy at large times. We need instead the so-called strong energy inequality \eqref{strongen}. When $\gamma>0$, the existence of global Leray solutions
satisfying the strong energy inequality~\eqref{strongen} below
can be established exactly as for the Navier--Stokes equations. But in the case $\gamma=0$, which is the focus of the present work, it is not clear why solutions of \eqref{MP} verifying the strong energy inequality \eqref{strongen} should exist. To deal with this issue, we will work with a class of solutions which we call \emph{restricted Leray solutions} following a terminology introduced by P.G.~Lemari\'e-Rieusset~\cite{Lem02}. The restricted Leray solutions are solutions obtained as a limit of solutions of some mollified approximation of \eqref{MP}, see Definition \ref{def:restricted} and Proposition \ref{prop:exist} below. 
 
For restricted Leray solutions, we are able to obtain
sharp $L^2$-decay results for large time.
The next theorem improves several partial results available in the literature in this direction.
Its statement is inspired by Wiegner's classical decay result \cite{Wie87} for the Navier-Stokes equation.
It allows one to obtain solutions to the micropolar system with algebraic decay of the energy
up to the rate $O((1+t)^{-5/2})$.
As it will be seen, our analysis takes fully advantage of the special structure of the micropolar system, thus leading to a deeper understanding on how microrotation affects the large time behavior of the fluid motion.

\begin{theorem}
\label{th:decay}
Let $\mu,\chi>0$ and $\gamma,\kappa\ge0$.
Let $z:=(u,w)$ be a restricted Leray solution
to~\eqref{MP}, with initial data $z_0:=(u_0,w_0)\in L^2_\sigma(\R^3)\times L^2(\R^3)$.
The following hold true:
\begin{itemize}
\item[i)] $\|u(t)\|_{L^2}^2+\|w(t)\|_{L^2}^2\to0$ as $t\to+\infty$.
\item[ii)] If $0\le \Gamma\le 5/2$ and the solution $(\ul,\wl)$ of the linear problem~\eqref{LMP1}
with initial data $(u_0,w_0)$ satisfies $\|\ul(t)\|_{L^2}^2=O(t^{-\Gamma})$,
 then $\|u(t)\|_{L^2}^2+\|w(t)\|_{L^2}^2=O(t^{-\Gamma})$.
\item[iii)] Under the condition of the previous item (in the case $\gamma=0$ we also require the additional
condition $\int (1+|\xi|)|\widehat z_0(\xi)|\dd\xi <\infty $), one has
	\begin{equation}\label{fluctuation}
	\|(u-\ul)(t)\|_{L^2}^2+\|(w-\wl)(t)\|_{L^2}^2\lesssim
	\begin{cases}
	(1+t)^{-1/2}\zeta(t) , &\text{if $\Gamma=0$},\\
	(1+t)^{-1/2-2\Gamma} , & \text{if $0<\Gamma<1$} , \\
	(1+t)^{-5/2}(\log(e+t))^2, &\text{if $\Gamma=1$} , \\
	(1+t)^{-5/2} , &\text{if $1<\Gamma\le 5/2$} , 
	\end{cases}
	\end{equation}
	with $\lim_{t\to+\infty}\zeta(t)=0$.
\end{itemize} 
\end{theorem}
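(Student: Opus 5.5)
We plan to prove Theorem~\ref{th:decay} by a Fourier splitting argument in the style of Wiegner~\cite{Wie87}. We first establish it for the smooth solutions $z_\varepsilon=(u_\varepsilon,w_\varepsilon)$ of the mollified system used in Definition~\ref{def:restricted}, with constants uniform in $\varepsilon$. We then pass to the limit $\varepsilon\to0$ along the subsequence defining the restricted Leray solution, using weak lower semicontinuity of the $L^2$ norm and strong $L^2_{loc}$ convergence for a.e.\ $t$. This avoids the missing strong energy inequality, since the approximate solutions satisfy the energy equality on every interval $[s,t]$.

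\textbf{Energy inequality and damping.} The key structural fact is the energy identity
\[
\tfrac12\tfrac{d}{dt}\big(\|u\|_{L^2}^2+\|w\|_{L^2}^2\big)+\mu\|\nabla u\|_{L^2}^2+\gamma\|\nabla w\|_{L^2}^2+\kappa\|\dive w\|_{L^2}^2+\chi\|\curl u-2w\|_{L^2}^2=0 .
\]
When $\gamma=0$ the $w$-component has no diffusion. We compensate with the damping: $\|w\|^2\le 2\|\curl u-2w\|^2/4+\tfrac12\|\nabla u\|^2$ up to constants. The dissipation therefore controls $c(\|\nabla u\|^2+\|w\|^2)$, and $\|w\|^2\ge \|\mathbf 1_{|\xi|\le\rho}\widehat w\|^2$ trivially.

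\textbf{Fourier splitting.} With $\rho(t)^2=k/(1+t)$ we obtain
\[
\tfrac{d}{dt}E+\tfrac{ck}{1+t}E\le \tfrac{ck}{1+t}\int_{|\xi|\le\rho}|\widehat u|^2\dd\xi ,
\]
where $E=\|u\|^2+\|w\|^2$. The low-frequency part of $\widehat u$ is then estimated via Duhamel: $\widehat z=\widehat z_L+\int_0^t \widehat{\mathcal{G}}(t-s)\,\widehat{\text{NL}}(s)\dd s$. Here $\mathcal G$ is the linear micropolar semigroup, whose symbol is bounded (Section~\ref{linearsect}), and $|\widehat{\text{NL}}(s,\xi)|\le|\xi|\,\|z(s)\|_{L^2}^2$, because $u\cdot\nabla u$ and $u\cdot\nabla w$ are in divergence form. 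This gives
\[
\int_{|\xi|\le\rho}|\widehat u|^2\lesssim \|u_L\|^2+\rho^{5}\Big(\int_0^t E\Big)^2 .
\]
Part (i) follows by the standard argument: $\|u_L\|\to0$ and $\int_0^tE\lesssim t$, and a first bootstrap yields decay; the approximation of data in $L^2$ takes care of uniformity. For part (ii) we iterate as Wiegner does: with $E=O(t^{-\alpha})$ we improve $\alpha$, and the nonlinear contribution $\rho^5(\int E)^2$ yields up to $t^{-5/2}$. This is exactly the critical threshold $\Gamma\le5/2$, with a log loss at $\Gamma=1$ that disappears after one more iteration.

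\textbf{Fluctuation estimate (iii).} We estimate $z-z_L$ through the energy-type identity for the difference, whose forcing is the nonlinearity. Low frequencies are again handled by Duhamel with $\rho^5(\int_0^t E)^2$ multiplied appropriately, which produces the rates $(1+t)^{-1/2-2\Gamma}$, $(1+t)^{-5/2}\log^2$, and $(1+t)^{-5/2}$. High frequencies are handled through the dissipation, where a term $\int |\langle u\cdot\nabla u_L,u-u_L\rangle|$ requires bounds on $\|\nabla z_L\|_{L^\infty}$. For $\gamma>0$ these follow from parabolic smoothing. For $\gamma=0$ the $w_L$ component has no smoothing, which is precisely why the hypothesis $\int(1+|\xi|)|\widehat z_0|<\infty$ appears: it gives $\|z_L\|_{L^\infty}+\|\nabla z_L\|_{L^\infty}\lesssim1$, together with the decay inherited from Theorem~\ref{th:linear-pro}.

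I expect the main obstacle to be the $\gamma=0$ case of (iii), since the term $u\cdot\nabla w_L$ must be paired with $w-w_L$ without any $\nabla w$ dissipation. I would integrate it by parts onto $w_L$ and use the $L^\infty$ bound on $\nabla w_L$ combined with the damping $-4\chi\|w-w_L\|^2$. The second delicate point is making every constant uniform in $\varepsilon$ so the estimates survive the limit.
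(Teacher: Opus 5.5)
Your plan matches the paper's architecture. It uses mollified solutions satisfying the energy equality on every $[s,t]$, which replaces the missing strong energy inequality. The damping $\chi\|\curl u-2w\|_{L^2}^2$ compensates for the absent $w$-diffusion. Then come Schonbek splitting with $g(t)^2\sim(1+t)^{-1}$, the Duhamel bound $g^5(\int_0^t\|z\|_{L^2}^2)^2$, Wiegner's iteration, a difference estimate in which $(J_\varepsilon u)\cdot\nabla w_L$ is absorbed by the damping of $w-w_L$, and passage to the limit. For the limit, $w_{\varepsilon_n}$ has no strong convergence; what you need is weak $L^2$ convergence of $z_{\varepsilon_n}(t)$ for every $t$, as in Proposition~\ref{prop:exist} e)--f).

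One ingredient genuinely differs. You keep only $\int_{|\xi|\le\rho}|\widehat u|^2$ on the right of the splitting inequality and bound it by $\|u_L\|_{L^2}^2$ plus the nonlinear term. This is legitimate, since only $I_{u_\varepsilon}$ actually appears there, and it makes (i)--(ii) independent of any decay of $w_L$. The paper instead bounds $I_{z_\varepsilon}$ by $\|e^{tM}z_0\|_{L^2}^2+\dots$. So already for (ii) it needs $\|z_L\|_{L^2}^2=O(t^{-\Gamma})$, which it obtains from the linear enstrophy identity (Propositions~\ref{prop:enstro} and~\ref{prop:impro}).

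In (iii), however, $w_L$ cannot be avoided, and this is the step your sketch only asserts. The rates rest entirely on $\|u_L\|_{L^\infty}^2+\|\nabla w_L\|_{L^\infty}^2=O(t^{-3/2-\Gamma})$. The boundedness supplied by $\int(1+|\xi|)|\widehat z_0|\dd\xi<\infty$ alone gives a forcing of size $t^{-\Gamma}$, hence only $\|z-z_L\|_{L^2}^2\lesssim t^{1-\Gamma}$. Write $\widehat{z_L}(\xi,t)=K(\xi,t/2)\widehat{z_L}(\xi,t/2)$ and use $|K|\lesssim e^{-ct|\xi|^2}$ for $|\xi|\le1$. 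Then the low frequencies contribute $\lesssim t^{-3/4}\|z_L(t/2)\|_{L^2}$, so you need $L^2$ decay of $w_L$ too, for $\gamma>0$ as well.

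Theorem~\ref{th:linear-pro} does provide this, but you must say how. Set $f=u_0+\tfrac12\curl w_0$. Since $\curl\curl w_0=-\Delta\P w_0$, the theorem reads $u_L=e^{\mu t\Delta}f+O_{L^2}(t^{-1})$ and $w_L=\tfrac12\curl e^{\mu t\Delta}f+O_{L^2}(t^{-3/2})$. Hence $\|e^{\mu t\Delta/2}f\|_{L^2}\le\|u_L(t/2)\|_{L^2}+Ct^{-1}$, and therefore $\|w_L(t)\|_{L^2}^2\lesssim t^{-1}\|e^{\mu t\Delta/2}f\|_{L^2}^2+t^{-3}=O(t^{-\min\{1+\Gamma,3\}})$. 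This suffices for $\Gamma\le5/2$ and is a valid substitute for the enstrophy argument. The enstrophy route gives more: the sharp $O(t^{-1-\Gamma})$ for every $\Gamma$, together with decay of $\nabla u_L$.

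The pairing $\langle (J_\varepsilon u)\cdot\nabla u_L,u-u_L\rangle$ you single out is the awkward one, because $u-u_L$ has no $L^2$ damping. Absorbing it into $\delta g^2\|u-u_L\|_{L^2}^2$ costs a factor $g^{-2}\sim t$ and needs $\|\nabla u_L\|_{L^\infty}^2=O(t^{-5/2-\Gamma})$, which is sharper than \eqref{AA}. The paper instead writes it as $-\langle (J_\varepsilon u)\cdot\nabla(u-u_L),u_L\rangle$ and absorbs it into $\|\nabla(u-u_L)\|_{L^2}^2$ using only $\|u_L\|_{L^\infty}$.

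A minor point: no logarithm arises in (ii) at $\Gamma=1$, whereas the $\log^2$ in (iii) is genuine and stays.
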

We established in Theorem \ref{th:linear-pro} some approximate formulas for $\ul$ and $\wl$ up to a $O(\frac1t)$ error for $\ul$ and up to a  $O(\frac1{t^{3/2}})$ error for $\wl$. So we can replace in \eqref{fluctuation} the term $\wl$ with the quantity $\frac12\curl e^{\mu t\Delta}u_0-\frac14\Delta e^{\mu t\Delta}\P w_0$ for all $\Gamma$, and we can replace $\ul$ by $e^{\mu t\Delta}u_0+\frac12\curl e^{\mu t\Delta}w_0$ for any $\Gamma\leq\frac34$.

Results on the decay of the energy for the micropolar system were established before in~\cite{BCFZ,Guterres2,Niche-Perusato}, but Theorem~\ref{th:decay} improves them
in several ways. In particular, we note that 
Theorem~\ref{th:decay} remains valid in the case $\gamma=0$,
whereas the condition $\gamma>0$ was required, and used in an essential way, in~\cite{BCFZ,Guterres2,Niche-Perusato}.
Including the partially inviscid case $\gamma=0$ represents an important new technical contribution of the present work.

As in classical Wiegner's theorem~\cite{Wie87}, the $L^2$-decay of the solutions agrees
with the decay of the linear part up to a critical decay rate, beyond which
the nonlinear effects are predominant. 
The restriction $\Gamma\le 5/2$ also appears in the case of the
Navier-Stokes equations and it is known to be optimal.
Remarkably, in assertions~\emph{ii)-iii)}, no assumptions
on the decay of the linear microrotation $\|\wl(t)\|_{L^2}^2$ is needed: this is due
to the surprising fact that the algebraic decay of $\|\ul(t)\|_{L^2}^2$ forces
a faster algebraic decay for $\|\wl(t)\|_{L^2}^2$. For this, see  Proposition \ref{prop:impro} in the Appendix.

Our method is somehow close to that of~\cite{BCFZ}, based on the Fourier splitting method introduced in~\cite{Sch85}.
The novelty of our approach relies in taking advantage of the presence of a damping term in the equation for~$w$ and showing that this term compensates the lack of dissipation in this equation.
Results for $\gamma=0$ were addressed, for the 2D case, in~\cite{Guo-Jia-Dong}  at a formal level (i.e., relying on regularity properties of solutions that are not rigorously established) and, in a much more systematic way, in~\cite{brandolese_2d_2024}. For the three-dimensional case, in a recent paper \cite{Niu-Shang}, the authors obtained corresponding decay estimates, when $\gamma=0$, under much stronger assumptions (both smoothness and smallness) on the initial data. More precisely, assuming $(u_0,w_0) \in (L^1  \cap H^s_\sigma) \times (L^1  \cap H^s)$, with $s>3/2$ and sufficiently small in $H^s$, they proved that global strong solutions exhibit heat-like decay rates for $u$, together with the improved decay for the microrotation $w$, which, in particular, corresponds to the case $\Gamma = 3/2$ in our result (see also Corollary \ref{cor:decay} below). It is also worth mentioning that the 2D case has a quite different structure, as the micro-rotational angular velocity $w$ has a fixed direction, 
and crucial features of the planar case are no longer valid in~3D. For example, the 2D enstrophy identity shown in ~\cite{brandolese_2d_2024}, which has deep consequences
on the large time behavior. Another important contribution of the present work is deriving and applying to the decay problem 
a new enstrophy-type identity for the 3D \emph{linearized problem}. 

\begin{remark}
Let $z_0=(u_0,w_0) \in L^2_\sigma(\R^3)\times L^2(\R^3)$
and let us compare, for example, our result with the ones  in~\cite{BCFZ,Niche-Perusato}.
In~\cite{BCFZ}, the authors dealt with the case $\gamma>0$ and 
proved that, if one additionally assumes
 $z_0\in L^1(\R^3)\times L^1(\R^3)$,
 then $\|u(t)\|_{L^2}^2+\|w(t)\|_{L^2}^2=O(t^{-3/2})$ as $t\to+\infty$.
Our theorem, besides encompassing the case $\gamma=0$, allows one to obtain the
same decay rate under the more general condition
$\|\ul(t)\|_{L^2}^2=O(t^{-3/2})$:
the result of Theorem~\ref{th:linear-pro} implies that
for the latter condition it is enough to assume that $u_0+\frac12\curl w_0\in \dot B^{-3/2}_{2,\infty}(\R^3)$,
since $f\in \dot B^{-3/2}_{2,\infty}(\R^3)$ if and only if
$\|e^{\mu t\Delta}f\|_{L^2}^2=O(t^{-3/2})$, see~\cite[Chapter 2]{BahCD11}.
This technical improvement is important because it shows
that solutions of the micropolar system \emph{can decay faster} than
the solution of the Navier-Stokes equation with the same
initial velocity~$u_0$, since the latter condition
holds without the need of $u_0\in  \dot B^{-3/2}_{2,\infty}(\R^3)$.
Similarly, Theorem~\ref{th:decay} may be used to improve the result in~\cite{Niche-Perusato}: therein, optimal decay rates of solutions (for $\gamma>0$) are obtained under assumptions on the \emph{decay character} of $u_0$ and $w_0$.
But, in fact, a more general assumption on the decay character of $u_0-\frac12\curl w_0$
would be enough.  
\end{remark}

We finish this introduction by discussing improved decay rates for $\|w(t)\|_{L^2}^2$. 
This issue is physically relevant due to the strong alignment phenomena of $w$ with the vorticity discovered
in~\cite{Guterres}. 
Results in this direction appeared 
in~\cite{Guterres2}, \cite{Guterres3}, but only for $\gamma>0$.
In all these references, the dissipation term $\gamma\Delta w$, in the equation for $w$, plays a crucial role in the arguments.
Corollary~\ref{cor:decay} below illustrates that an improved decay rate for $\|w(t)\|_{L^2}^2$ can also be obtained when $\gamma=0$.
Here, we just state a simple particular case of this corollary:

\emph{If $u_0\in (L^1\cap L^2_\sigma)(\R^3)$ and $w_0\in (L^{3/2}\cap L^2)(\R^3)$ (if $\gamma=0$, we also require $\int (1+|\xi|)|\widehat z_0(\xi)|\dd\xi <\infty $), then, as $t\to+\infty$,}
\[
\begin{split}
&\|u(t)\|_{L^2}^2=O(t^{-3/2})
\quad\text{(same decay rate as for the heat equation}) , \\
&\|w(t)\|_{L^2}^2=O(t^{-5/2})
\quad\text{(the decay for the heat equation would be slower:
$\|e^{\mu t\Delta }w_0\|_{L^2}^2=O(t^{-1/2})$}).
\end{split}
\]

\subsection*{Notations}
The symbol $\mathbb{P}$ denotes the $L^2$ orthogonal projector onto divergence-free vector fields, and $L^2_\sigma(\R^3)$ will represent the space of 3D vector fields in $L^2(\R^3)$ that are divergence-free in the sense of distributions. 
In our estimates, we will often write 
$f(t)\lesssim g(t)$ to indicate there is a constant $C>0$, independent on time, such that $f(t)\le C g(t)$. The symbol $\mathcal{F}$ denotes the Fourier transform. We will also use the classical notation $H^s$ for $L^2$ based Sobolev spaces and $\dot B^{s}_{p,q}$ for the homogeneous Besov spaces.

\section{Restricted Leray solutions for the micropolar system}

The aim of this section is to prove the existence of a special class of Leray solutions of the micropolar fluids equations \eqref{MP}. The $L^2$ energy estimates for \eqref{MP} consists in multiplying the first equation by $u$, the second equation by $w$, integrating in space and adding. Doing some integrations by parts and using the classical cancellations $\int u\cdot\nabla u \cdot u=\int u\cdot\nabla w \cdot w=\int\nabla p\cdot u=0$ we formally obtain the following relation:
\begin{equation*}
\frac12\partial_t (\nl2u^2+\nl2w^2)+(\mu+\chi)\nl2{\nabla u}^2+\gamma\nl2{\nabla w}^2+\kappa\nl2{\div w}^2+4\chi\nl2w^2-4\chi \int_{\R^3}w\cdot(\curl  u)=0
\end{equation*}
Observing, by integration by parts, that
\begin{equation*}
\nl2{\nabla u}=\nl2{\curl  u},
\end{equation*}
the energy estimate can be written under the form 
\begin{equation}\label{energ}
\frac12\partial_t (\nl2u^2+\nl2w^2)+\mu\nl2{\nabla u}^2+\gamma\nl2{\nabla w}^2+\kappa\nl2{\div w}^2+\chi\nl2{\curl  u-2w}^2=0.
\end{equation}
The above relation is formal, it must be viewed as an a priori estimate. 

The existence of finite energy solutions of \eqref{MP} in the case $\gamma>0$ follows in the same manner as for the classical incompressible Navier-Stokes equations. The case $\gamma=0$ is a bit more delicate, but still easily doable with classical compactness methods (see \cite{brandolese_2d_2024}). Indeed, if $\uep,\wep$ is a sequence of suitable approximate solutions, the $L^2$ estimates from \eqref{energ} imply $L^2_tH^1_x$ estimates for $u_\ep$, which in turn implies compactness in $L^2_{loc}$ for  $u_\ep$. For $\wep$ we have no $H^1$ estimates in $x$, so we get only weak convergence in $L^2$. But one can easily check that strong convergence for $\uep$ in $L^2_{loc}$ and weak convergence for $\wep$ in $L^2$ suffices to pass to the limit in \eqref{MP}. Taking the $\liminf\limits_{\ep\to0}$ in the energy estimate for $(\uep,\wep)$ and using the weak lower semicontinuity of norms, one can also easily prove that the solution obtained with such a limiting process verifies the energy inequality
\begin{multline}
\label{enerzero}
\nl2{u(t)}^2+\nl2{w(t)}^2+2\mu\int_0^t \nl2{\nabla u}^2+2\gamma\int_0^t\nl2{\nabla w}^2+2\kappa\int_0^t\nl2{\div w}^2\\
+2\chi\int_0^t\nl2{\curl  u-2w}^2
\leq\nl2{u_0}^2+\nl2{w_0}^2
\end{multline}
for all $t\geq0$.

Unfortunately, the method of proof that we will employ in Section~\ref{sec:decay} requires the following strong energy inequality to hold true:
\begin{multline}\label{strongen}
\nl2{u(t)}^2+\nl2{w(t)}^2+2\mu\int_s^t \nl2{\nabla u}^2+2\gamma\int_s^t\nl2{\nabla w}^2+2\kappa\int_s^t\nl2{\div w}^2\\
+2\chi\int_s^t\nl2{\curl  u-2w}^2
\leq\nl2{u(s)}^2+\nl2{w(s)}^2
\end{multline}
for all $t\geq s$ and for almost all $s\geq0$. Such an inequality can be proved in the case $\gamma>0$ with the same proof as for the Navier-Stokes equations. In the case $\gamma=0$, the proof given in the case of the Navier-Stokes equations does not go through to the micropolar fluid equations. Let us briefly explain why. The method of proof of the  strong energy inequality consists in writing \eqref{strongen} for 
solutions $(\uep,\wep)$, of a mollified system, in taking  the $\liminf\limits_{\ep\to0}$  and using the weak lower semicontinuity of norms to deal with the left-hand side. To pass to the limit in the right-hand side we would need strong convergence in $L^2$ for $\uep(s)$ and $\wep(s)$. This is essentially true for $\uep(s)$ for almost all $s$ due to the strong convergence of $\uep$ in $L^2_{loc}(\R_+\times\R^3)$. But for the $\wep$ we only have weak $L^2$ convergence and this is why the method fails. The existence of weak solutions of \eqref{MP} verifying the strong energy inequality \eqref{strongen} is open in the case $\gamma=0$.

To deal with this issue, we will use a notion of a so-called restricted Leray solution. This terminology is borrowed from the book of Lemarié-Rieusset~\cite{Lem02}. The construction of restricted Leray solutions proceeds as follows.

Let $\theta\in C^\infty_0(\R^3)$ be a non-negative and even function, such that $\int\theta=1$ and let, for 
$\varepsilon>0$ and $x\in\R^3$, $\theta_\varepsilon(x)=\varepsilon^{-3}\theta(x/\varepsilon)$.
For any $L^1_{\rm loc}(\R^3)$ function $f$, we denote its mollification by
\[
J_\varepsilon f:=\theta_\varepsilon*f.
\]
Let us consider, for $\varepsilon>0$, the following mollified system
\begin{equation}
\label{MMP}
\left\{
\begin{aligned}
&\partial_t\uep+\P((J_\varepsilon \uep)\cdot \nabla \uep)=(\mu+\chi)\Delta \uep+2\chi\curl  \wep,\\
&\partial_t\wep+(J_\varepsilon \uep)\cdot \nabla \wep=\gamma\Delta \wep+\kappa\nabla(\dive  \wep)
	+2\chi\curl  \uep-4\chi \wep\\
&(\uep,\wep)|_{t=0}=(u_0,w_0)\\
&\dive  \uep=0
\end{aligned}
\right.
 \qquad 
x\in\R^3, \;t>0\\
\end{equation}

Since $J_\varepsilon \uep$ is smooth, the proof of the existence of finite energy solutions of the above system is classical. In addition, we can prove that every finite energy solution of this mollified system verifies a strong energy inequality. More precisely, we have the following proposition.
\begin{proposition}\label{existep}
Let $(u_0,w_0) \in L^2_\sigma(\R^3) \times L^2(\R^3) $ and let $\gamma\geq0$. There exists a weak solution $(\uep,\wep)$ to \eqref{MMP} with initial data $(u_0,w_0)$ such that:
\begin{enumerate}[label=\alph*)]
\item $\uep\in L^\infty(\R_+;L^2(\R^3))\cap C^0_w([0,\infty);L^2(\R^3))\cap L^2(\R_+;\dot H^1(\R^3))$;
\item
\begin{itemize*}
\item $\wep\in L^\infty(\R_+;L^2(\R^3))\cap C^0_w([0,\infty);L^2(\R^3))\cap L^2(\R_+;\dot H^1(\R^3))$ if $\gamma>0$\\
\item $\wep\in L^\infty(\R_+;L^2(\R^3))\cap C^0_w([0,\infty);L^2(\R^3))$ if $\gamma=\kappa=0$;\\
\item $\wep\in L^\infty(\R_+;L^2(\R^3))\cap C^0_w([0,\infty);L^2(\R^3))$ and $\dive\wep\in L^2(\R_+\times\R^3)$ if $\gamma=0$ and $\kappa>0$.
\end{itemize*} 
\item the following energy equality holds true:
\begin{multline}\label{enereqep}
\nl2{\uep(t)}^2+\nl2{\wep(t)}^2+2\mu\int_s^t\nl2{\nabla \uep}^2+2\gamma\int_s^t\nl2{\nabla \wep}^2+2\kappa\int_s^t\nl2{\dive \wep}^2\\
+2\chi\int_s^t\nl2{\curl  \uep-2\wep}^2
=\nl2{\uep(s)}^2+\nl2{\wep(s)}^2
\end{multline}
for all $t\geq s\geq0$.
\end{enumerate}
\end{proposition}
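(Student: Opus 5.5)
We construct $(\uep,\wep)$ by a Galerkin/Friedrichs scheme with $\ep$ fixed, and the real task is to prove the energy \emph{equality} \eqref{enereqep}. First we regularize in frequency. Let $S_n$ be the Fourier truncation to $|\xi|\le n$. We solve
\[
\partial_t u^n+S_n\P\big((J_\ep u^n)\cdot\nabla u^n\big)=(\mu+\chi)\Delta u^n+2\chi\curl w^n,
\]
\[
\partial_t w^n+S_n\big((J_\ep u^n)\cdot\nabla w^n\big)=\gamma\Delta w^n+\kappa\nabla\dive w^n+2\chi\curl u^n-4\chi w^n,
\]
with data $(S_nu_0,S_nw_0)$. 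This is an ODE in the closed subspace of $L^2$ functions with Fourier support in $\{|\xi|\le n\}$. The transport terms conserve $L^2$ norms because $\dive J_\ep u^n=0$, and the linear part yields exactly \eqref{energ}. Hence the solutions are global, satisfy the energy equality, and give bounds uniform in $n$: $u^n$ is bounded in $L^\infty L^2\cap L^2\dot H^1$, and $w^n$ is bounded in $L^\infty L^2$. In addition $\nabla w^n$ is bounded in $L^2L^2$ when $\gamma>0$, and $\dive w^n$ is bounded in $L^2L^2$ when $\kappa>0$.

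Next we pass to the limit $n\to\infty$. The equations bound $\partial_t u^n$ and $\partial_t w^n$ in $L^2_{loc}H^{-N}$; here we use that $J_\ep u^n$ is bounded in $L^\infty_{t,x}$ by $C_\ep\|u^n\|_{L^2}$. Aubin--Lions then gives strong $L^2_{loc}$ convergence of $u^n$, and weak-$*$ convergence of $w^n$ in $L^\infty L^2$ together with $C^0_w$ convergence in time. The nonlinear terms pass to the limit: $J_\ep u^n\to J_\ep u$ strongly (even locally uniformly in $x$ for a.e.\ $t$), and $\nabla w^n$ converges weakly in $L^2H^{-1}$. This gives a weak solution with the regularity listed in a) and b). The limit energy \emph{inequality} follows by lower semicontinuity.

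The main step, and the expected obstacle, is the energy equality when $\gamma=0$, because then $\wep$ has no spatial regularity. We argue directly on the limit equation, in the spirit of DiPerna--Lions and Constantin--E--Titi commutator estimates. Mollify in space with $J_\delta$, test the $w$ equation against $J_\delta J_\delta\wep$ and the $u$ equation against $J_\delta J_\delta \uep$, and integrate over $[s,t]$. The linear terms are harmless and converge as $\delta\to0$, because every term is a quadratic form in $(\uep,\wep)$ and these lie in the spaces provided by a) and b). The transport term for $u$ is fine since $\uep\in L^2H^1$. The only delicate term is
\[
\int J_\delta\big((J_\ep\uep)\cdot\nabla\wep\big)\cdot J_\delta\wep
=\int\big[J_\delta,(J_\ep\uep)\cdot\nabla\big]\wep\cdot J_\delta\wep ,
\]
which uses $\int (J_\ep\uep)\cdot\nabla J_\delta\wep\cdot J_\delta\wep=0$. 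The key point is that the transport field $v=J_\ep\uep$ is smooth in $x$: it satisfies $\nabla v\in L^2_tL^\infty_x$, with $\|\nabla J_\ep \uep\|_{L^\infty}\le C_\ep\|\nabla\uep\|_{L^2}$. The Friedrichs commutator lemma then gives
\[
\|[J_\delta,v\cdot\nabla]f\|_{L^2}\lesssim\|\nabla v\|_{L^\infty}\|f\|_{L^2},
\]
together with convergence to $0$ in $L^2$ for each fixed $f$. By dominated convergence in time, the term therefore tends to $0$ in $L^1(s,t)$.

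We conclude by letting $\delta\to0$. Since $\|J_\delta \wep(t)\|_{L^2}^2\to\|\wep(t)\|_{L^2}^2$ for every $t$ (we use the $C^0_w$ representative, so the identity holds for every $t$), we obtain \eqref{enereqep} for all $t\ge s\ge0$. Continuity of the norm in time then follows, which upgrades weak to strong continuity. The same argument covers $\gamma>0$ trivially. It also covers the $\kappa$ term, using $\dive\wep\in L^2$.
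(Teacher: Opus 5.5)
Your proposal is correct and follows essentially the paper's route: a Fourier-truncated approximation solved as an ODE in $\{|\xi|\le n\}$, compactness to pass to the limit, and then the energy equality for $w_\varepsilon$ obtained by mollifying with $J_\delta$ and applying the Friedrichs commutator estimate, which works because the transport field $J_\varepsilon u_\varepsilon$ is Lipschitz in $x$. The differences are minor and harmless: you treat the $u_\varepsilon$-equation with the same mollification instead of the paper's $L^2\dot H^1$--$L^2\dot H^{-1}$ duality argument, and you cover the case $\gamma=\kappa=0$ with the same commutator argument instead of citing the 2D reference, which is legitimate since the $\kappa$-term plays no role in that argument.
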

\begin{remark}
We stress the fact that the energy equality above covers the cases $\gamma=0$ or $\kappa=0$, with the convention that 
the integrals preceded by a zero coefficient do not appear.
\end{remark}
\begin{proof}
Since $\jep\uep$ is smooth, the existence of a weak solution with the regularity stated in items a) and b) follows in a classical manner. For the convenience of the reader, we sketch the proof following the approach given in \cite[Proposition~2.1]{DanP}.

For any positive integer $n$, let us introduce the frequency truncation operator $\EE_n$, defined by
\[
\widehat{\EE_n f}(\xi):=\widehat f(\xi){\bf 1}_{ |\xi|\le n}.
\]
We consider the filtered mollified system, for  $x\in\R^3$ and $t>0$:
\begin{equation}
\label{MMP-fil}
\left\{
\begin{aligned}
&\partial_t \unep =-\EE_n\P((J_\varepsilon \P \EE_n \unep )\cdot \nabla \EE_n \unep )
+(\mu+\chi)\Delta \EE_n \unep +2\chi\curl  \EE_n \wnep ,\\
&\partial_t \wnep =-\EE_n((J_\varepsilon\P\EE_n\unep) \cdot \nabla \EE_n \wnep )
+\gamma\Delta\EE_n\wnep
	+\kappa\nabla(\dive  \EE_n \wnep )+2\chi\curl  \EE_n\unep-4\chi \EE_n \wnep .\\
&(\unep ,\wnep )|_{t=0}=(\EE_n u_0,\EE_n w_0).
\end{aligned}
\right.
\end{equation}
If $(\unep ,\wnep )\in \EE_nL^2$, then by standard calculations (see, e.g., \cite[Section~6.1]{BahCD11}) one verifies that all the terms appearing in the right-hand side of the first two equations in~\eqref{MMP-fil} belong to~$\EE_nL^2(\R^3)$. These calculations also imply that the Cauchy--Lipschitz theorem applies in $\EE_nL^2$, endowed with the usual $L^2$-norm.
Hence, the filtered mollified system~\eqref{MMP-fil} admits a maximal time $0<T_n\le+\infty$ and unique maximal solution 
$(\unep ,\wnep )\in C^1([0,T_n),\EE_nL^2(\R^3))$. Applying the projection $\EE_n$ to \eqref{MMP-fil} and using that   $\EE^2_n=\EE_n$ we observe that $(\EE_n \unep,\EE_n \wnep)$ also solves   \eqref{MMP-fil}. By uniqueness we must have that $\EE_n \unep=\unep$ and $\EE_n \wnep=\wnep$ so \eqref{MMP-fil} becomes
\begin{equation}
\label{MMP-fil-bis}
\left\{
\begin{aligned}
&\partial_t \unep =-\EE_n\P((J_\varepsilon \P  \unep )\cdot \nabla  \unep )
+(\mu+\chi)\Delta \unep +2\chi\curl   \wnep ,\\
&\partial_t \wnep =-\EE_n((J_\varepsilon\P\unep) \cdot \nabla  \wnep )
+\gamma\Delta\wnep
	+\kappa\nabla(\dive   \wnep )+2\chi\curl  \unep-4\chi  \wnep .\\
&(\unep ,\wnep )|_{t=0}=(\EE_n u_0,\EE_n w_0).
\end{aligned}
\right.
\end{equation}

Clearly $(\unep ,\wnep )=(\EE_n \unep ,\EE_n \wnep )\in H^\infty(\R^3)$, so scalar multiplication of the first equation by $u_n$ and of the second equation by $w_n$ is allowed. Using again that $(\unep ,\wnep )=(\EE_n \unep ,\EE_n \wnep )$ together with the self-adjointness of $\EE_n$ we get that
$(\unep ,\wnep )$ satisfies the energy equality for $0\le t<T_n$:
\begin{multline}\label{Difi}
\nl2{\unep(t)}^2+\nl2{\wnep(t)}^2+2\mu\int_0^t \nl2{\nabla \unep}^2+2\gamma\int_0^t\nl2{\nabla \wnep}^2+2\kappa\int_0^t\nl2{\div \wnep}^2\\
+2\chi\int_0^t\nl2{\curl  \unep-2\wnep}^2
=\nl2{\EE_n u_0}^2+\nl2{\EE_n w_0}^2
\leq \nl2{u_0}^2+\nl2{w_0}^2.
\end{multline}
In particular, $(\unep ,\wnep )\in L^\infty([0,T_n),\EE_nL^2(\R^3))$, so by the Cauchy-Lipschitz theory  $T_n=+\infty$.

Relation \eqref{Difi} also implies that $\unep$ and $\wnep$ are bounded independently of $n$ (and of $\ep$ too but this is irrelevant at this stage) in the spaces mentioned in the items a) and b) of the statement. We can therefore extract a subsequence, also denoted by 
$(\unep ,\wnep )$ which converges weakly to a couple $(\uep,\wep)$ belonging to the spaces announced in the items a) and b) of the statement. One can pass to the limit $n\to\infty$ in \eqref{MMP-fil-bis} with classical compactness arguments and show that $(\uep,\wep)$ verifies the weak formulation of \eqref{MMP}, as in \cite[Proposition~2.1]{DanP}, where the existence of weak solution for a similar system of equations --- the Boussinesq system without dissipation in the temperature equation, was established. Note that the weak continuity in time of $\uep$ and $\wep$ with values in $L^2$ follows from the boundedness in $L^2$, the continuity in time with values in $\mathscr{D}'$ (which can be easily deduced from the PDE verified by  $\uep$ and $\wep$) and the density of $\mathscr{D}$ in $L^2$.  

It remains to show item c), the energy equality \eqref{enereqep}.
This energy equality will follow from two facts. One fact is that the equation of $\uep$ can be multiplied by $\uep$ yielding
\begin{equation}\label{enerequ}
\nl2{\uep(t)}^2+2(\mu+\chi)\int_s^t\nl2{\nabla \uep}^2-4\chi \int_s^t\int_{\R^3}\wep\cdot(\curl  \uep)= \nl2{\uep(s)}^2
\end{equation}
 for all $t\geq s\geq0$. The second fact is that $\wep$ verifies the energy equality
\begin{equation}\label{enereqw}
\nl2{\wep(t)}^2+2\gamma\int_s^t\nl2{\nabla \wep}^2
+2\kappa\int_s^t\nl2{\div \wep}^2
+8\chi\int_s^t\nl2{\wep}^2-4\chi \int_s^t\int_{\R^3}\wep\cdot(\curl  \uep)=\nl2{\wep(s)}^2
\end{equation}
for all $t\geq s\geq0$.

Adding \eqref{enerequ} and \eqref{enereqw} clearly implies the  energy equality \eqref{enereqep}. It remains to show \eqref{enerequ} and \eqref{enereqw}.

One can easily verify that the equation of $\uep$ can be multiplied by $\uep$ yielding  \eqref{enerequ}. Indeed, we know that $\uep\in  L^2([0,\infty);\dot H^1(\R^3))$ so $\Delta\uep\in  L^2([0,\infty);\dot H^{-1}(\R^3))$. Also $\wep\in L^\infty(\R_+;L^2(\R^3))$ so $\curl\wep\in  L^\infty([0,\infty);\dot H^{-1}(\R^3))$. Since $\jep\uep$ is smooth, we clearly have that $\P((\jep\uep)\cdot\nabla\uep)=\P\dive((\jep\uep)\otimes\uep)\in L^\infty([0,\infty);\dot H^{-1}(\R^3))$. We infer that $\partial_t\uep=-\P((J_\varepsilon \uep)\cdot \nabla \uep)+(\mu+\chi)\Delta \uep+2\chi\curl  \wep\in L^2_{loc}([0,\infty);\dot H^{-1}(\R^3))$. Since all the terms in the equality
\begin{equation*}
\partial_t\uep+\P((J_\varepsilon \uep)\cdot \nabla \uep)=(\mu+\chi)\Delta \uep+2\chi\curl  \wep
\end{equation*}
belong to $L^2_{loc}([0,\infty);\dot H^{-1}(\R^3))$, the relation above can be multiplied by $\uep$ which belongs to the dual space $L^2([0,\infty);\dot H^{-1}(\R^3))$ (see for instance \cite{lions_problemes_1968}). This proves \eqref{enerequ}.

\medskip

To prove \eqref{enereqw} we distinguish three cases.

{\it Case 1}: $\gamma>0$. Since there is a diffusion term in the equation of $\wep$ the same argument as above applies.

{\it Case 2}: $\gamma=\kappa=0$. In this case the proof of \eqref{enereqw} follows the argument given in \cite[Proposition 2.5]{brandolese_2d_2024} where a similar energy equality was proved (see relation (2.5) from that reference) for a similar equation in a more complicated setting (there $\jep \wep$ is replaced by $w$). The equation considered in that reference is scalar but we can work on the equation of $\wep$ from \eqref{MMP} component by component. The proof of \cite[Proposition 2.5]{brandolese_2d_2024} does not depend on the space dimension. Relation  \eqref{enereqw} is therefore a direct consequence of the argument given in the proof of \cite[Proposition 2.5]{brandolese_2d_2024}.

{\it Case 3}: $\gamma=0$ and $\kappa>0$.

We apply $\jal$ to the equation of $\wep$ given in \eqref{MMP}, we multiply it by $\jal\wep$ and integrate on $[s,t]\times\R^3$. We get after some integrations by parts
\begin{multline}\label{eqhep}
\nl2{\jal\wep(t)}^2
+2\kappa \int_s^t\nl2{\jal\dive\wep}^2
+8\chi \int_s^t\nl2{\jal\wep}^2
=\nl2{\jal\wep(s)}^2\\
+4\chi\int_s^t\int_{\R^3}\jal \curl\uep\cdot\jal\wep
-2\int_s^t\int_{\R^3}\jal((\jep\uep)\cdot\nabla \wep)\cdot\jal\wep. 
\end{multline}

We let $\al\to0$. Since $\jal$ is a mollifier and $\wep(t)\in L^2$ we know that $\jal\wep(t)\to\wep(t)$ in $L^2$ as $\al\to0$, so
\begin{equation*}
\lim_{\al\to0}\nl2{\jal\wep(t)}^2=\nl2{\wep(t)}^2.
\end{equation*}
Similarly
\begin{align*}
\lim_{\al\to0}\int_s^t\nl2{\jal\dive\wep}^2&=\int_s^t\nl2{\dive\wep}^2\\
\lim_{\al\to0}\int_s^t\nl2{\jal\wep}^2&=\int_s^t\nl2{\wep}^2\\
\lim_{\al\to0}\nl2{\jal\wep(s)}^2&=\nl2{\wep(s)}^2.
\end{align*}

Next, we have that $\curl\uep$ and $\wep$ belong to the space $L^2\big((s,t)\times\R^3\big)$ so $\jal\curl\uep\to\curl\uep$ and  $\jal\wep\to\wep$ in $L^2\big((s,t)\times\R^3\big)$ as $\al\to0$. Therefore
\begin{equation*}
\lim_{\al\to0}\int_s^t\int_{\R^3}\jal \curl\uep\cdot\jal\wep
=\int_s^t\int_{\R^3} \curl\uep\cdot\wep.
\end{equation*}

We bound the last term of \eqref{eqhep} with a commutator estimate:
\begin{align}
\int_s^t\int_{\R^3}\jal((\jep\uep)\cdot\nabla \wep)\cdot\jal\wep
&=\int_s^t\int_{\R^3}\big[\jal, (\jep\uep)\cdot\nabla\big]\wep\cdot \jal\wep
+\int_s^t\int_{\R^3}(\jep\uep)\cdot\nabla \jal\wep\cdot \jal\wep  \notag\\
&=\int_s^t\int_{\R^3}\big[\jal, (\jep\uep)\cdot\nabla\big]\wep\cdot \jal\wep\label{commutator}\\
&\leq \|\big[\jal, (\jep\uep)\cdot\nabla\big]\wep\|_{L^2((s,t)\times\R^3)}\|\jal\wep\|_{L^2((s,t)\times\R^3)}\notag\\
&\stackrel{\al\to0}\longrightarrow 0\notag
\end{align}
where we used the classical properties of commutators see for example \cite[Lemma 2.4]{brandolese_2d_2024} and the fact that $\jep\uep$ is Lipschitz in the space variable.

We conclude that letting $\al\to0$ in \eqref{eqhep} implies \eqref{enereqw} and completes the proof.
\end{proof}

Following (with slight modification) Lemari\'e-Rieusset's terminology~\cite{Lem02}, we introduce the following notion.
\begin{definition}
Let $(u_0,w_0)\in L^2_\sigma(\R^3) \times L^2(\R^3)$.
\label{def:restricted}
\begin{itemize}
\item A solution $(\uep,\wep)$ of \eqref{MMP} as obtained in Proposition \ref{existep} is called a \textit{mollified solution}.
\item A \emph{restricted Leray solution} of the micropolar system~\eqref{MP} with initial data $(u_0,w_0)$ is a couple $(u,w)$ obtained as a limit in the sense of distributions of a sequence of mollified solutions: there exists $\ep_n\to0$ and a solution  $(\uepn,\wepn)$ of \eqref{MMP} with $\ep=\ep_n$ such that $(\uepn,\wepn)\to (u,w)$ in $\mathscr{D}'\big((0,\infty)\times\R^3\big)$.
\end{itemize}
\end{definition}

It is not hard to show that a restricted Leray solution of \eqref{MP} is a solution in the sense of the distributions, that it verifies an energy estimate and that every bound in $L^2$ for $\uep$ and $\wep$ independent of $\ep$ is also true for the restricted Leray solution. More precisely, we have the following result.
\begin{proposition}\label{prop:exist}
Let $(u_0,w_0)\in L^2_\sigma(\R^3) \times L^2(\R^3)$ and let $\gamma\geq0$. Then there exists a restricted Leray solution of \eqref{MP}. In addition, any restricted Leray solution $(u,w)$ of \eqref{MP} verifies the following properties:
\begin{enumerate}[label=\alph*)]
\item The couple $(u,w)$ solves \eqref{MP} in the sense of the distributions and $u\big|_{t=0}=u_0$, $w\big|_{t=0}=w_0$.
\item $u\in L^\infty(\R_+;L^2(\R^3))\cap C^0_w([0,\infty);L^2(\R^3))\cap L^2(\R_+;\dot H^1(\R^3))$;
\item
\begin{itemize*}
\item $w\in L^\infty(\R_+;L^2(\R^3))\cap C^0_w([0,\infty);L^2(\R^3))\cap L^2(\R_+;\dot H^1(\R^3))$ if $\gamma>0$;\\
\item $w\in L^\infty(\R_+;L^2(\R^3))\cap C^0_w([0,\infty);L^2(\R^3))$ if $\gamma=\kappa=0$;\\
\item $w\in L^\infty(\R_+;L^2(\R^3))\cap C^0_w([0,\infty);L^2(\R^3))$ and $\dive w\in L^2(\R_+\times\R^3)$ if $\gamma=0$ and $\kappa>0$.
\end{itemize*} 
\item the following energy inequality holds true:
\begin{multline}\label{enereq}
\nl2{u(t)}^2+\nl2{w(t)}^2+2\mu\int_0^t\nl2{\nabla u}^2+2\gamma\int_0^t\nl2{\nabla w}^2+2\kappa\int_0^t\nl2{\dive w}^2\\
+2\chi\int_0^t\nl2{\curl  u-2w}^2
\leq\nl2{u_0}^2+\nl2{w_0}^2
\end{multline}
for all $t\geq0$.
\item If $f,g:\R_+\to\R_+$ do not depend on $\ep_n$ and if we have the bounds $\nl2{\uepn(t)}\leq f(t)$ and $\nl2{\wepn(t)}\leq g(t)$ for all $t\geq0$, then the same bounds hold true for the restricted Leray solution $(u,w)$ which is the limit of $(\uepn,\wepn)$: $\nl2{u(t)}\leq f(t)$ and $\nl2{w(t)}\leq g(t)$ for all $t\geq0$.
\item 
Let $\zl=(\ul,\wl)$ be the solution of the linear micropolar system, $\zepn=(\uepn,\wepn)$ and $z=(u,w)$.
If $h:\R_+\to\R_+$ does not depend on $\ep_n$ and if we have the bounds $\nl2{\zepn(t)-\zl(t)}\leq h(t)$ for all $t\geq0$, then  $\nl2{z(t)-\zl(t)}\leq h(t)$ for all $t\geq0$.
\end{enumerate}
\end{proposition}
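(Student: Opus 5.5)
We will argue by compactness, starting from the mollified solutions of Proposition~\ref{existep}. For existence, take any sequence $\ep_n\to0$ and the corresponding mollified solutions $(\uepn,\wepn)$. The energy equality \eqref{enereqep} with $s=0$ gives bounds independent of $n$: $\uepn$ is bounded in $L^\infty(\R_+;L^2)\cap L^2(\R_+;\dot H^1)$, and $\wepn$ is bounded in $L^\infty(\R_+;L^2)$, with $\nabla\wepn$ (resp.\ $\dive\wepn$) bounded in $L^2$ when $\gamma>0$ (resp.\ $\kappa>0$). We extract a subsequence converging weak-$*$ in these spaces; the limit $(u,w)$ then lies in the spaces of items b) and c), except for weak time continuity, which is treated below. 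From the equations \eqref{MMP} we bound $\partial_t\uepn$ in $L^{4/3}_{loc}(H^{-s})$ for some large $s$; the nonlinear term $\P\dive(\jepn\uepn\otimes\uepn)$ is controlled by the standard interpolation $L^\infty L^2\cap L^2\dot H^1\subset L^{8/3}L^4$. The Aubin--Lions lemma then gives strong convergence $\uepn\to u$ in $L^2_{loc}(\R_+\times\R^3)$, and also $\jepn\uepn\to u$ in $L^2_{loc}$.

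To pass to the limit in the weak formulation, note that $(\jepn\uepn)\otimes\uepn\to u\otimes u$ strongly in $L^1_{loc}$. For the term $(\jepn\uepn)\otimes\wepn$, the first factor converges strongly in $L^2_{loc}$ and the second weakly in $L^2$, so the product converges in $\mathscr D'$. The linear terms pass to the limit trivially, so $(u,w)$ solves \eqref{MP} in the distributional sense, with the nonlinear $w$-term written in divergence form. For time continuity, the $\partial_t$ bounds show that $\uepn,\wepn$ are equicontinuous in $H^{-s}$. Arzel\`a--Ascoli then gives convergence in $C([0,T];H^{-s}_{loc})$, hence in $C_w([0,T];L^2)$ by the uniform $L^2$ bound and density. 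This yields weak continuity, the convergence $\zepn(t)\rightharpoonup z(t)$ weakly in $L^2$ \emph{for every} $t\ge0$, and the initial condition $z(0)=z_0$.

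The same argument applies to an arbitrary restricted Leray solution, which gives the "in addition" part. By definition the full sequence $\zepn$ converges in $\mathscr D'$, so every subsequence limit equals $(u,w)$. The compactness just described therefore applies to the whole sequence: it gives a), b), c), and the pointwise-in-time weak convergence $\zepn(t)\rightharpoonup z(t)$ in $L^2$ for all $t$. Items e) and f) follow from this convergence and weak lower semicontinuity of the norm. For f), $\zepn(t)-\zl(t)\rightharpoonup z(t)-\zl(t)$, so $\nl2{z(t)-\zl(t)}\le\liminf\nl2{\zepn(t)-\zl(t)}\le h(t)$, and likewise for e). For the energy inequality d), start from \eqref{enereqep} with $s=0$. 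The left-hand terms at time $t$ are handled by weak convergence at every $t$. The dissipation integrals are handled by weak lower semicontinuity of the $L^2((0,t)\times\R^3)$ norms, since $\nabla\uepn$, $\nabla\wepn$, $\dive\wepn$ and $\curl\uepn-2\wepn$ all converge weakly in $L^2((0,t)\times\R^3)$ to the corresponding quantities for $(u,w)$. The right-hand side is fixed and equal to $\nl2{u_0}^2+\nl2{w_0}^2$.

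The main obstacle is establishing, uniformly in $n$, the time-derivative bound and the weak-in-time convergence at \emph{every} $t$ (not just almost every $t$), since $\wepn$ has no spatial compactness when $\gamma=0$. This is resolved by working only in negative Sobolev spaces for $w$, where the transport term $\dive((\jepn\uepn)\otimes\wepn)$ is bounded in $L^{2}_{loc}(H^{-5/2})$ using $\jepn\uepn\in L^2L^6$ and $\wepn\in L^\infty L^2$ (so the product lies in $L^2L^{3/2}\subset L^2H^{-3/2}$, by Sobolev embedding $L^{3/2}\subset H^{-1/2}$ in three dimensions). Combined with the uniform $L^2$ bound, this gives the needed equicontinuity.
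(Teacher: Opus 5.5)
Your proposal is correct and follows essentially the same route as the paper. Both arguments use uniform bounds from \eqref{enereqep}, then a uniform bound on $\partial_t z_{\varepsilon_n}$ in a negative Sobolev space, giving convergence in $C^0([0,\infty);H^{-s}_{loc})$ and hence $z(0)=z_0$, weak continuity and weak $L^2$ convergence at every $t$; then strong $L^2_{loc}$ convergence of $u_{\varepsilon_n}$ and $J_{\varepsilon_n}u_{\varepsilon_n}$ to pass to the limit; and finally weak lower semicontinuity for d), e), f). The only real difference is that the paper uses the cruder bound $\partial_t z_{\varepsilon_n}\in L^\infty(\R_+;H^{-3})$, since both products lie in $L^\infty L^1\subset L^\infty H^{-2}$, so your $L^{8/3}L^4$ and $L^2L^{3/2}$ refinements are valid but not needed.
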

\begin{proof}
The existence of a restricted Leray solution is trivial. Indeed, thanks to \eqref{enereqep}, any sequence of mollified solutions is bounded in $L^\infty(\R_+;L^2(\R^3))$ so it has a subsequence converging in the sense of the distributions. The limit of such a subsequence is a restricted Leray solution.

Let now $(u,w)$ be a restricted Leray solution and let $\zepn=(\uepn,\wepn)$ be a sequence of mollified solutions  \eqref{MMP} converging in the sense of the distributions to $z=(u,w)$. We write the energy equality \eqref{enereqep} for $s=0$:
\begin{multline}\label{Difi1}
\nl2{\uepn(t)}^2+\nl2{\wepn(t)}^2+2\mu\int_0^t \nl2{\nabla \uepn}^2+2\gamma\int_0^t\nl2{\nabla \wepn}^2+2\kappa\int_0^t\nl2{\div \wepn}^2\\
+2\chi\int_0^t\nl2{\curl  \uepn-2\wepn}^2
= \nl2{u_0}^2+\nl2{w_0}^2.
\end{multline}

This relation implies that  $\uepn$ and $\wepn$ are bounded independently of $\ep_n$ in the spaces mentioned in the items b) and c) of the statement. So necessarily $u$ and $w$ must belong to these spaces (we will justify later the weak continuity in time with values in $L^2$). Moreover we can subtract a subsequence, also denoted by 
$(\uepn ,\wepn )$, which converges weakly to $(u,w)$ in the spaces announced in the items b) and c) of the statement.

From \eqref{MMP} we see that $\partial_t \zepn$ is bounded in $L^\infty(\R_+;H^{-3})$. The Ascoli theorem together with the compact embedding of $H^{-3}$ into $H^{-4}_{loc}$ allow then to extract a sub-sequence, also denoted by $\zepn$, such that $\zepn\to z$ in $C^0\big([0,\infty);H^{-4}_{loc}\big)$. In particular $\zepn(0)\to z(0)$ in $H^{-4}_{loc}$, so $z(0)=z_0$.

The weak continuity in time of $u$ and $w$ with values in $L^2$ follows from the boundedness in $L^2$, the continuity in time with values in $\mathscr{D}'$ (which follows from $u,w\in C^0\big([0,\infty);H^{-4}_{loc}\big)$ as seen above) and the density of $\mathscr{D}$ in $L^2$. This proves items b) and c) of the statement.

We observed that $\uepn$ converges to $u$ weakly in $L^2_{loc}(\R_+;H^1)$ and strongly in  $C^0\big([0,\infty);H^{-4}_{loc}\big)$. The quantity $\jepn\uepn$ is bounded in the same spaces as $\uepn$ so, after extracting another subsequence, enjoys the same convergence properties. Clearly $\jepn\uepn-\uepn$ goes to 0 in the sense of the distributions, so $\jepn\uepn$ and $\uepn$ have the same limit. We infer that $\jepn\uepn$ converges to $u$ weakly in $L^2_{loc}(\R_+;H^1)$ and strongly in  $C^0\big([0,\infty);H^{-4}_{loc}\big)$, so $\jepn\uepn\to u$ strongly in $L^2_{loc}(\R_+\times\R^3)$. Recalling that $\wepn$ converges to $w$ weak$*$ in $L^\infty(\R_+;L^2)$, one can pass to the limit $\varepsilon_n\to0$ in \eqref{MMP} (where $\ep$ is replaced by $\varepsilon_n$) and obtain that the couple $(u,w)$ verifies the micropolar system \eqref{MP} in the sense of the distributions. We already observed that $z(0)=(u_0,w_0)$, so item a) is completely proved.

We prove now the energy inequality \eqref{enereq}. Observe first that the strong convergence $\zepn\to z$ in $C^0\big([0,\infty);H^{-4}_{loc}\big)$ implies the time-pointwise convergence $\zepn(t)\to z(t)$ in $H^{-4}_{loc}$, so in $\mathscr{D}'$ too, for all $t\geq0$. Recalling that $\zepn(t)$ is also bounded in $L^2$, we infer that $\zepn(t)\to z(t)$ weakly in $L^2$ for  all $t\geq0$. The weak lower semi-continuity of norms implies that
\begin{equation*}
\nl2{z(t)}\leq \liminf_{n\to\infty}\nl2{\zepn(t)}
\end{equation*}
for all $t\geq0$. Similarly
\begin{equation*}
\int_0^t \nl2{\nabla u}^2
\leq \liminf_{n\to\infty}\int_0^t \nl2{\nabla \uepn}^2
\end{equation*}
and
\begin{equation*}
\int_0^t\nl2{\curl  u-2w}^2
\leq \liminf_{n\to\infty}\int_0^t\nl2{\curl  \uepn-2\wepn}^2
\end{equation*}
for all $t\geq0$. In addition, if $\gamma>0$ we also have that
\begin{equation*}
\int_0^t \nl2{\nabla w}^2
\leq \liminf_{n\to\infty}\int_0^t \nl2{\nabla \wepn}^2
\end{equation*}
and  if $\kappa>0$ we  have that
\begin{equation*}
\int_0^t \nl2{\dive w}^2
\leq \liminf_{n\to\infty}\int_0^t \nl2{\dive \wepn}^2.
\end{equation*}

We conclude that taking the $\liminf_{n\to\infty}$ in \eqref{Difi1} implies the energy inequality \eqref{enereq}.

We prove now item e). Assume that $\nl2{\uepn(t)}\leq f(t)$. We take  the $\liminf\limits_{n\to\infty}$ and use the weak lower semi-continuity of norms together with the fact that $\uepn(t)$ converges weakly in $L^2$ to $u(t)$ for all $t\geq0$:
\begin{equation*}
\nl2{u(t)}\leq \liminf_{n\to\infty} \nl2{\uepn(t)}\leq f(t).
\end{equation*}
Similarly, the bound  $\nl2{\wepn(t)}\leq g(t)$ implies the bound  $\nl2{w(t)}\leq g(t)$.

Item f) follows in the same manner since we of course have that $\zepn(t)-\zl(t)\to z(t)-\zl(t)$ weakly in $L^2$ for all $t\geq0$. This completes the proof.
\end{proof}

\section{Explicit formulas and precise asymptotic behavior for the linear part}
\label{linearsect}

The linear part of the micropolar system reads:
\begin{equation}
\label{LMP}
\left\{
\begin{aligned}
&\dert \ul =(\mu+\chi)\Delta\ul +2\chi\curl  \wl, \qquad 
x\in\R^3, \;t>0\\
&\dert \wl =\gamma\Delta \wl+\kappa\nabla(\dive  \wl)+2\chi\curl  \ul-4\chi \wl.\\
&\dive  \ul=0
\end{aligned}
\right.
\end{equation}
We define $\zl:=(\ul,\wl)$.

At the linear level, the two equations are not too strongly coupled. To take advantage of this fact we consider the Helmholtz decomposition
\[
\wl =\P \wl +(\wl -\P \wl )=\hl -\nabla q, \qquad\text{with }\hl :=\P \wl .
\]
Taking the divergence in the first equation above yields
\begin{equation}\label{eqq}
\Delta q=-\div \wl . 
\end{equation}
Taking the divergence in the second equation of \eqref{LMP} yields
\begin{equation*}
\partial_t(\div \wl )=(\gamma+\kappa)\Delta\div \wl -4\chi\div \wl 
\end{equation*}
which implies exponential decay for $\div \wl $. Indeed, taking the Fourier transform above results in 
\begin{equation*}
\partial_t(\widehat{\div \wl })=\bigl[-4\chi-(\gamma+\kappa)|\xi|^2\big]\widehat{\div \wl }
\end{equation*}
which can be solved explicitly:
\begin{equation*}
\widehat{\div \wl} =e^{-t(4\chi+(\gamma+\kappa)|\xi|^2)}\widehat{\div w_0}
=e^{-t(4\chi+(\gamma+\kappa)|\xi|^2)}i\xi\cdot\widehat{w_0}.
\end{equation*}
From \eqref{eqq} we get that
\begin{equation*}
\nabla q=-\nabla\Delta^{-1}\div \wl 
\end{equation*}
so
\begin{equation*}
\widehat{\nabla q}=i\frac{\xi}{|\xi|^2}\widehat{\div \wl }
=-e^{-4\chi t}e^{-(\gamma+\kappa)t|\xi|^2}\frac{\xi(\xi\cdot\widehat{w_0})}{|\xi|^2}. 
\end{equation*}
Then
\begin{equation}\label{soldivw}
\widehat \wl =\widehat \hl -\widehat{\nabla q}=\widehat \hl +e^{-4\chi t}e^{-(\gamma+\kappa)t|\xi|^2}\frac{\xi(\xi\cdot\widehat{w_0})}{|\xi|^2}.
\end{equation}

It suffices now to compute $\ul $ and $\hl $. Since $\curl \nabla q=0$ we have that $\curl  \wl =\curl  \hl $. Applying the Leray projector $\P$ to the equation of $\wl $ in \eqref{LMP} implies the following system for $(\ul ,\hl )$:
\begin{equation}
\label{sysL}
\begin{cases}
\dert \ul =(\mu+\chi)\Delta \ul +2\chi\curl  \hl \\
\dert \hl =\gamma\Delta \hl +2\chi\curl  \ul -4\chi \hl \\
\dive  \ul =0\\
\dive  \hl =0.
\end{cases}
\end{equation}
We can further simplify this system by setting $\Omega_L:=\curl  \ul $. Taking the curl of the first PDE above yields
\begin{equation}
\label{LVMP}
\begin{cases}
\dert \Omega_L=(\mu+\chi)\Delta \Omega_L-2\chi\Delta \hl \\
\dert \hl =\gamma\Delta \hl +2\chi\Omega_L-4\chi \hl \\
\dive \Omega_L=0\\
\dive  \hl =0.
\end{cases}
\end{equation}
To solve~\eqref{LVMP}, with initial data $\Omega_L^0$ and $h_0$, we take the Fourier transform and we get
\begin{equation}
\label{FVL}
\begin{pmatrix}
\dert \widehat \Omega_L(\xi,t)\\
\dert \widehat \hl (\xi,t)
\end{pmatrix}
=
\begin{pmatrix}
-(\mu+\chi)|\xi|^2 I_3 & 2\chi|\xi|^2 I_3\\
2\chi I_3 & -(\gamma|\xi|^2 +4\chi)I_3
\end{pmatrix}
\begin{pmatrix}
\widehat \Omega_L(\xi,t)\\
\widehat \hl (\xi,t)
\end{pmatrix}
\end{equation}
Here $I_3$ denotes the $(3\times3)$ identity matrix.

Let $L\colon {\rm Mat}_{2\times 2}(\R)\to{\rm Mat}_{6\times 6}(\R)$ be the linear transformation
\[
A=\begin{pmatrix} a_{11}&a_{12}\\a_{21}&a_{22}\end{pmatrix}
\mapsto L(A)
=
\begin{pmatrix}
a_{11}I_3&a_{12}I_3\\a_{21}I_3&a_{22}I_3
\end{pmatrix}.
\] 
One easily proves by induction that, for any natural integer $k$, $L(A^k)=L(A)^k$,
and as $L$ is a linear continuous mapping, we get
\[
\exp(L(A))=L(\exp(A)).
\]
This means that the solution of the linear system~\eqref{FVL} is
\begin{equation}
\label{SFV}
\begin{pmatrix}
\widehat \Omega_L(\xi,t)\\
\widehat \hl (\xi,t)
\end{pmatrix}
(\xi,t)
=L\left(\exp \left[ t
\begin{pmatrix}
-(\mu+\chi)|\xi|^2  & 2\chi|\xi|^2 \\
2\chi  & -(\gamma|\xi|^2 +4\chi)
\end{pmatrix}\right]
\right)
\begin{pmatrix}
\widehat \Omega_L^0(\xi)\\
\widehat h_0(\xi)
\end{pmatrix}.
\end{equation}
The exponential of the $2\times2$ matrix above was computed in \cite[Lemma 5.1]{brandolese_2d_2024} using the Putzer spectral formula. The result is the following expression:
\begin{multline}\label{exp1}
\exp \left[ t
\begin{pmatrix}
-(\mu+\chi)|\xi|^2  & 2\chi|\xi|^2 \\
2\chi  & -(\gamma|\xi|^2 +4\chi)
\end{pmatrix}\right]=\\
=\frac1{2\sqrt D}
\begin{pmatrix}
 e^{-t(\alpha-\sqrt D)}(\sqrt D-\beta)+e^{-t(\alpha+\sqrt D)}(\sqrt D+\beta)&
2\chi R \big(e^{-t(\alpha-\sqrt D)}-e^{-t(\alpha+\sqrt D)}\big)\\
2\chi \big(e^{-t(\alpha-\sqrt D)}-e^{-t(\alpha+\sqrt D)}\big)&
e^{-t(\alpha-\sqrt D)}(\sqrt D+\beta)+e^{-t(\alpha+\sqrt D)}(\sqrt D-\beta)
\end{pmatrix},
\end{multline}
where we used the notation
\begin{align*}
\alpha&=\frac12(\mu+\chi+\gamma)R+2\chi\\
\beta&=\frac12(\mu+\chi-\gamma)R-2\chi\\
 D &=\beta^2+4\chi^2R\\
R&=|\xi|^2.
\end{align*}

We know by the Biot-Savart law that
\begin{equation}\label{BS}
\widehat \ul (\xi,t)=\frac{i\xi}{|\xi|^2}\times\widehat\Omega_L(\xi,t).
\end{equation}

Putting together relations \eqref{SFV}, \eqref{exp1} and \eqref{BS} imply the following formulas for $\widehat \ul $ and $\widehat \hl $:
\begin{equation}
\label{solou2}
\left\{\begin{aligned}
\widehat \ul (\xi,t)&= E_{1,1}(\xi,t)\widehat u_0(\xi)+E_{2,1}(\xi,t)i\xi\times\widehat h_0(\xi)\\
\widehat \hl (\xi,t)&= E_{2,1}(\xi,t)i\xi\times\widehat u_0(\xi)+E_{2,2}(\xi,t)\widehat h_0(\xi)\\
\end{aligned}
\right.
\end{equation}
where
\begin{align}
E_{1,1}&=\frac{1}{2\sqrt{ D }}\bigl[e^{-(\alpha-\sqrt{ D })t}(-\beta+\sqrt{ D })+e^{-(\alpha+\sqrt{ D })t}(\beta+\sqrt{ D })\bigr]\notag\\
E_{2,1}&=\frac{\chi}{\sqrt D }e^{-(\alpha-\sqrt{ D })t}(1-e^{-2t\sqrt{ D }})\notag\\
E_{2,2}&=\frac{1}{2\sqrt{ D }}\bigl[e^{-(\alpha-\sqrt{ D })t}(\beta+\sqrt{ D })+e^{-(\alpha+\sqrt{ D })t}(-\beta+\sqrt{ D })\bigr].\label{e22}
\end{align}

Using \eqref{soldivw} and \eqref{solou2} we conclude that the solution of \eqref{LMP} is given by 
\begin{equation}
\label{solou3}
\left\{\begin{aligned}
\widehat \ul (\xi,t)&= E_{1,1}(\xi,t)\widehat u_0(\xi)+E_{2,1}(\xi,t)i\xi\times\widehat h_0(\xi)\\
\widehat \wl (\xi,t)&= E_{2,1}(\xi,t)i\xi\times\widehat u_0(\xi)+E_{2,2}(\xi,t)\widehat h_0(\xi)+e^{-4\chi t}e^{-(\gamma+\kappa)t|\xi|^2}\frac{\xi(\xi\cdot\widehat{w_0})}{|\xi|^2}.
\end{aligned}
\right.
\end{equation}

The quantities $E_{1,1}$, $E_{2,1}$ and $E_{2,2}$ have been introduced and studied in \cite{brandolese_2d_2024}, see relations (5.6)--(5.8) of that paper. We will rely on the estimates from that paper.

Let  $M$ be the linear operator associated with the linear problem~\eqref{LMP}, so that 
$\zl=e^{tM}z_0$.
Let $K(\cdot,t)$ be the symbol of the linear micropolar operator $e^{tM}$ so that 
\begin{equation}\label{notationsgr}
\widehat \zl(\xi,t)=K(\xi,t)\widehat z_0(\xi)
=\mathcal{F}(e^{tM}z_0)(\xi).
\end{equation}
Since $\zl$ has 6 components, $K$ is a $6\times 6$ matrix. Thanks to \eqref{solou3}, \eqref{soldivw} and using that $\xi\times\widehat h_0(\xi)=\xi\times\widehat w_0(\xi)$, one can show the following explicit formula for $K$:
\begin{equation}
\label{formulaK}
K(\xi,t)=
\begin{pmatrix}
E_{11}(\xi,t)I_3&iE_{21}(\xi,t)\ \xi\times\\
iE_{21}(\xi,t)\ \xi\times&E_{22}(\xi,t) I_3
\end{pmatrix}
+
\begin{pmatrix}
O_3&O_3\\
O_3&e^{-4\chi t}e^{-(\gamma+\kappa)t|\xi|^2} (1-E_{22}(\xi,t)) \frac{\xi\otimes\xi}{|\xi|^2}
\end{pmatrix}
\end{equation}
where $I_3$ denotes the $3\times3$ identity, $O_3$ is the $3\times3$ matrix with zero entries, $\xi\otimes\xi$ is the matrix $(\xi_i\xi_j)_{1\leq i,j\leq3}$ and $\xi\times$ is the matrix whose action on a vector is the exterior product with $\xi$:
\begin{equation*}
\xi\times=\begin{pmatrix}
0 & -\xi_3 & \xi_2 \\
\xi_3 & 0 & -\xi_1 \\
-\xi_2 & \xi_1 & 0
\end{pmatrix}
.
\end{equation*}

We observe that the first term on the right-hand side of \eqref{formulaK} is quite similar to the formula for the symbol $K$ from \cite[relation (5.9)]{brandolese_2d_2024}, so we will be able to use the estimates proved in that reference to bound this first term. The second term on the right-hand side of \eqref{formulaK} is exponentially decaying as $t\to\infty$ because one can easily check from \eqref{e22} that $E_{22}$ is uniformly bounded in time and space. Therefore, this second term is negligible. 

The same proof as in  \cite[Proposition 5.3]{brandolese_2d_2024} shows the following bound for $K$:
\begin{lemma}
There exists $c>0$ such that 
\begin{equation*}
|K(\xi,t)| \lesssim
\begin{cases}  
e^{-c|\xi|^2t} &\text{if $\gamma>0$}\\
\max\{e^{-ct},e^{-ct|\xi|^2}\}\ &\text{if $\gamma=0$}.
\end{cases}
\end{equation*}
In particular
\begin{equation}
\label{infoK}
\left\{
\begin{aligned}
& \esssup_{\xi\in\R^3,\,t\ge0}|K(\xi,t)|<\infty\\
& \text{for a.e. $\xi\in\R^3$}, \quad K(\xi,t)\to0\quad\text{as $t\to+\infty$}.
\end{aligned}
\right.
\end{equation}
\end{lemma}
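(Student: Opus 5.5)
The plan is to bound the symbol $K$ entrywise through the explicit formula \eqref{formulaK}, splitting it as the paper does into the ``2D-like'' block built from $E_{11},E_{21},E_{22}$ and the exponentially damped gradient part. For the second block there is little to do. We have $|\xi\otimes\xi|/|\xi|^2\le1$, and $E_{22}$ is uniformly bounded: since $D=\beta^2+4\chi^2R\ge\beta^2$, we get $|\beta|\le\sqrt D$ and $\alpha-\sqrt D\ge0$. The last fact follows because $\alpha^2-D=(\mu+\chi)\gamma R^2+4\chi\mu R\ge0$. Hence this block is $\lesssim e^{-4\chi t}$, which is dominated by both right-hand sides: for $\gamma>0$ we have $e^{-4\chi t}\le e^{-4\chi t}e^{-(\gamma+\kappa)t|\xi|^2}$, which is in fact already of the form $e^{-ct}e^{-ct|\xi|^2}$.

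For the first block, $|\xi\times|\lesssim|\xi|$, so it suffices to show that $|E_{11}|$, $|\xi||E_{21}|$ and $|E_{22}|$ satisfy the stated bounds. I would study the two eigenvalues $\lambda_\pm=\alpha\pm\sqrt D$, which are both nonnegative. The argument splits into low and high frequencies.

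\emph{Low frequencies, $R\le R_0$ small.} Expanding gives $\sqrt D=2\chi-\tfrac12(\mu+\chi-\gamma)R+\chi R+O(R^2)$, so that $\lambda_-=\mu R+O(R^2)\ge\tfrac{\mu}{2}R$ and $\lambda_+\ge 2\chi$. The prefactors are harmless here, since $\sqrt D\approx2\chi$ is bounded below. In $E_{11}$ the factor $\sqrt D-\beta$ is bounded. In $E_{21}$ the factor $1/\sqrt D$ is bounded and $1-e^{-2t\sqrt D}\le1$. We obtain $|E_{ij}|\lesssim e^{-\mu Rt/2}+e^{-2\chi t}$, and on $R\le R_0$ one has $e^{-2\chi t}\le e^{-ct|\xi|^2}$ with $c=2\chi/R_0$. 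This gives $e^{-c|\xi|^2t}$ in both cases.

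\emph{High frequencies, $R\ge R_0$.}
\begin{itemize}
\item If $\gamma>0$: $\lambda_-\ge$ the smaller eigenvalue, which grows like $\min(\mu+\chi,\gamma)R$ up to lower order. More precisely, $\lambda_-=(\alpha^2-D)/(\alpha+\sqrt D)\gtrsim R$ because $\alpha^2-D\gtrsim R^2$ and $\alpha+\sqrt D\lesssim R$. So all entries decay like $e^{-cRt}$. The factor $|\xi|E_{21}$ is controlled since $\sqrt D\gtrsim R$ when $\mu+\chi\ne\gamma$; when $\beta$ is small, $D\ge4\chi^2R$ still gives $|\xi|/\sqrt D\lesssim1$.
\item If $\gamma=0$: $\alpha^2-D=4\chi\mu R$ while $\alpha+\sqrt D\sim(\mu+\chi)R$. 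Hence $\lambda_-\to 4\chi\mu/(\mu+\chi)>0$ and $\lambda_-\ge c$ for $R\ge R_0$, giving $e^{-ct}$. The factor $|\xi|/\sqrt D\lesssim|\xi|/R\lesssim1$, and $\sqrt D\pm\beta\lesssim\sqrt D$.
\end{itemize}
Combining the two frequency regimes yields the displayed bound. The consequences \eqref{infoK} are then immediate: the bound is uniformly bounded, and it tends to $0$ for every $\xi\ne0$.

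The main obstacle is the uniformity of the lower bound on $\lambda_-$ in the intermediate range, together with the control of $E_{21}$, where $1/\sqrt D$ could be dangerous if $D$ vanished. I will handle this by noting that $D\ge4\chi^2R$ never vanishes for $\xi\ne0$. Moreover, $\lambda_-=(\alpha^2-D)/(\alpha+\sqrt D)$ is continuous and positive on any compact annulus $R_0\le R\le R_1$. This gives a uniform constant $c$ there, which can be absorbed either into $e^{-ct|\xi|^2}$ or into $e^{-ct}$ by shrinking $c$. This is exactly the strategy of \cite[Proposition 5.3]{brandolese_2d_2024}, which I would follow line by line.
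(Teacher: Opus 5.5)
Your proposal is correct and is essentially the argument the paper defers to, namely the proof of \cite[Proposition 5.3]{brandolese_2d_2024}. Every entry of the main block is bounded by a constant times $e^{-t(\alpha-\sqrt D)}$, because $|\beta|\le\sqrt D$ and $\sqrt D\ge 2\chi|\xi|$. Moreover $\alpha-\sqrt D=(\alpha^2-D)/(\alpha+\sqrt D)\ge(\alpha^2-D)/(2\alpha)$, which is at least $c|\xi|^2$ if $\gamma>0$ and at least $c\min\{1,|\xi|^2\}$ if $\gamma=0$; this gives the bound for all $\xi$ at once, so your Taylor expansion and compactness step are not needed.

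There is one slip to fix in the gradient block when $\gamma>0$. The inequality $e^{-4\chi t}\le e^{-4\chi t}e^{-(\gamma+\kappa)t|\xi|^2}$ is reversed, and $e^{-4\chi t}$ alone is not $\lesssim e^{-c|\xi|^2t}$ uniformly in $\xi$. You must keep the Gaussian factor and bound that block by $Ce^{-4\chi t}e^{-(\gamma+\kappa)t|\xi|^2}\le Ce^{-\gamma t|\xi|^2}$.

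Incidentally, recomputing from \eqref{soldivw} and \eqref{solou3}, the coefficient of $\xi\otimes\xi/|\xi|^2$ in that block is really $e^{-4\chi t}e^{-(\gamma+\kappa)t|\xi|^2}-E_{22}$, which is not $O(e^{-4\chi t})$. Since $|E_{22}|$ already obeys the target bound, your conclusion is unaffected.
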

A second estimate that we require is similar to \cite[Lemma 6.4]{brandolese_2d_2024} and the proof is the same with the appropriate modifications due to the change of dimension:
\begin{lemma}

Let $\Gamma\ge0$ and 
$z_0\in L^2(\R^3)^6$, $\dive u_0=0$. Assume also that
$\|\zl(t)\|_{L^2}^2=O(t^{-\Gamma})$ as $t\to+\infty$. 
In the case $\gamma=0$ we additionally assume $\int (1+|\xi|)|\widehat z_0(\xi)|\dd\xi <\infty$.
Then we have
\begin{equation}
\label{AA}
\begin{split}
\|\zl(t)\|_{L^\infty}^2+\|\nabla \zl(t)\|_{L^\infty}^2=O(t^{-3/2-\Gamma}),
\end{split}
\qquad\text{as $t\to+\infty$}.
\end{equation}
\end{lemma}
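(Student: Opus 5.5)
We bound the sup norms through the Fourier transform: $\|\zl(t)\|_{L^\infty}+\|\nabla\zl(t)\|_{L^\infty}\lesssim\int(1+|\xi|)|K(\xi,t)\widehat z_0(\xi)|\dd\xi$. Using the semigroup property, we write $\widehat\zl(\xi,t)=K(\xi,t/2)\widehat\zl(\xi,t/2)$ and split the frequency integral into a low-frequency region $|\xi|\le1$ and a high-frequency region $|\xi|>1$.

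\emph{Low frequencies.} Here we use the bound of the first lemma, which for $|\xi|\le1$ gives $|K(\xi,t/2)|\lesssim e^{-ct|\xi|^2}$ whenever $\gamma\ge0$, since $\max\{e^{-ct},e^{-ct|\xi|^2}\}=e^{-ct|\xi|^2}$ when $|\xi|\le1$. By Cauchy--Schwarz and Plancherel,
\begin{equation*}
\int_{|\xi|\le1}(1+|\xi|)|K(\xi,t/2)||\widehat\zl(\xi,t/2)|\dd\xi\lesssim\Big(\int_{\R^3}e^{-2ct|\xi|^2}\dd\xi\Big)^{1/2}\|\zl(t/2)\|_{L^2}\lesssim t^{-3/4}\,t^{-\Gamma/2}.
\end{equation*}
Squaring this gives $O(t^{-3/2-\Gamma})$, which is the claimed rate.

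\emph{High frequencies, $\gamma>0$.} For $|\xi|>1$ we have $|K(\xi,t/2)|\lesssim e^{-ct|\xi|^2}\le e^{-ct/2}e^{-ct|\xi|^2/2}$. Cauchy--Schwarz then gives a bound $e^{-ct/2}\|z_0\|_{L^2}$ up to polynomial factors, which is exponentially small. No extra hypothesis on $z_0$ is needed in this case.

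\emph{High frequencies, $\gamma=0$.} This is the main obstacle, because for $|\xi|>1$ the symbol $K$ is only exponentially decaying in $t$ and not in $\xi$, so there is no smoothing. We therefore avoid the semigroup splitting here and work with the initial data directly: $|K(\xi,t)|\lesssim e^{-ct}$, so
\begin{equation*}
\int_{|\xi|>1}(1+|\xi|)|K(\xi,t)\widehat z_0(\xi)|\dd\xi\lesssim e^{-ct}\int(1+|\xi|)|\widehat z_0(\xi)|\dd\xi.
\end{equation*}
This is finite and exponentially decaying precisely because of the extra assumption $\int(1+|\xi|)|\widehat z_0|<\infty$. Exponential decay beats any algebraic rate.

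Combining the two regions gives \eqref{AA}. The only point to check carefully is that the bound of the first lemma is indeed of the heat type $e^{-ct|\xi|^2}$ uniformly on $|\xi|\le1$ when $\gamma=0$, which follows from the $\max$ form stated there. The dimension enters only through $\int_{\R^3}e^{-ct|\xi|^2}\dd\xi\sim t^{-3/2}$, which is the change with respect to the 2D argument of \cite{brandolese_2d_2024}.
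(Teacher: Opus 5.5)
Your proof is correct and follows essentially the route the paper intends; the paper gives no proof here and defers to the 2D argument of \cite[Lemma 6.4]{brandolese_2d_2024}, with only the dimension-dependent Gaussian integral now producing $t^{-3/2}$. That argument is the same as yours: factor $K(\xi,t)=K(\xi,t/2)K(\xi,t/2)$ and apply Cauchy--Schwarz with the heat-type bound on $K$ at low frequencies, which turns the $L^2$ decay of $\zl(t/2)$ into the extra factor $t^{-3/4}$. For $\gamma=0$, the high frequencies are damped exponentially but not smoothed, and they are controlled exactly by the hypothesis $\int(1+|\xi|)|\widehat z_0(\xi)|\dd\xi<\infty$.
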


The precise asymptotic behavior of the quantities $E_{1,1}$, $E_{2,1}$ and $E_{2,2}$ have been studied in \cite{brandolese_2d_2024}. We collect in the following proposition the estimates proved in Propositions 5.5, 5.6 and 5.7 from \cite{brandolese_2d_2024}.
\begin{proposition}\label{prop-e}
There exist two positive constants $c=c(\mu,\chi,\gamma)$ and $C=C(\mu,\chi,\gamma)$ such that:
\begin{enumerate}[label=\alph*)]
\item if $\gamma>0$ then 
\begin{align*}
|E_{1,1}(\xi,t)-e^{-\mu t|\xi|^2}|&\leq \frac{C}{t}e^{-ct|\xi|^2}\\
\big|E_{2,2}(\xi,t)-\frac{|\xi|^2}4e^{-\mu t|\xi|^2}\big|&\leq \frac{C}{t^2}e^{-c t|\xi|^2}\\
\big|E_{2,1}(\xi,t)-\frac12e^{-\mu t|\xi|^2}\big|&\leq \frac{C}{t}e^{-c t|\xi|^2}
\end{align*}
for all $ t\geq 1$ and $\xi\in\R^3$.
\item if $\gamma=0$ then 
\begin{align*}
|E_{1,1}(\xi,t)-e^{-\mu t|\xi|^2}|&\leq \frac{C}{t}e^{-ct|\xi|^2}+\frac{C}{1+|\xi|^2}e^{-ct}\\
\big|E_{2,2}(\xi,t)-\frac{|\xi|^2}4e^{-\mu t|\xi|^2}\big|&\leq \frac{C}{t^2}e^{-c t|\xi|^2}+Ce^{-c t}\\
\big|E_{2,1}(\xi,t)-\frac12e^{-\mu t|\xi|^2}\big|&\leq \frac{C}{t}e^{-c t|\xi|^2}+\frac{C}{1+|\xi|^2}e^{-c t}
\end{align*}
for all $ t\geq 1$ and $\xi\in\R^3$.
\end{enumerate}
\end{proposition}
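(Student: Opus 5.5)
The plan is to work directly from the explicit formulas for $E_{1,1}$, $E_{2,1}$, $E_{2,2}$ in terms of the two eigenvalues
\[
\lambda_\pm:=\alpha\pm\sqrt D ,\qquad R=|\xi|^2 ,
\]
and to split frequency space into low frequencies $R\le\delta$ and high frequencies $R\ge\delta$, with $\delta>0$ a small fixed constant.

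\textbf{Step 1: two structural identities.} First I would record
\[
\lambda_+\lambda_-=\det=R\bigl(\gamma(\mu+\chi)R+4\chi\mu\bigr),\qquad \lambda_++\lambda_-=2\alpha .
\]
Since $D=\beta^2+4\chi^2R>0$ and $\lambda_+\ge\alpha\ge2\chi$, these give two things:
\begin{itemize}
\item $\lambda_+\ge 2\chi$ always, so every term carrying $e^{-\lambda_+t}$ is bounded by $Ce^{-2\chi t}$;
\item $\lambda_-=\det/\lambda_+\ge c\min(R,1)$ in the case $\gamma=0$, and $\lambda_-\ge cR$ in the case $\gamma>0$.
\end{itemize}
The second point uses $\lambda_+\lesssim 1+R$.

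\textbf{Step 2: low frequencies $R\le\delta$.} Here $\beta=-2\chi+O(R)$ and $\sqrt D=2\chi+O(R)$, so $\lambda_-=\mu R+O(R^2)$ by the product formula. The prefactors expand as
\[
\frac{\sqrt D-\beta}{2\sqrt D}=1+O(R),\qquad \frac{\chi}{\sqrt D}=\frac12+O(R),\qquad \frac{\beta+\sqrt D}{2\sqrt D}=\frac{4\chi^2R}{2\sqrt D(\sqrt D-\beta)}=\frac R4+O(R^2).
\]
Writing $e^{-\lambda_-t}-e^{-\mu Rt}$ with the mean value theorem gives a bound $CtR^2e^{-ctR}$. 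The elementary inequality $(tR)^ke^{-ctR}\lesssim e^{-c'tR}$ then yields:
\begin{itemize}
\item for $E_{1,1}$ and $E_{2,1}$, the error is $\lesssim (R+tR^2)e^{-ctR}\lesssim t^{-1}e^{-c'tR}$;
\item for $E_{2,2}$, the extra factor $R$ gives $\lesssim (R^2+tR^3)e^{-ctR}\lesssim t^{-2}e^{-c'tR}$.
\end{itemize}
The fast terms $e^{-\lambda_+t}$ are $\le e^{-2\chi t}\lesssim t^{-2}e^{-c'tR}$, since $R$ is bounded in this region.

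\textbf{Step 3: high frequencies $R\ge\delta$.} If $\gamma>0$, both $\lambda_\pm\ge cR$ and $\mu R\ge cR$, with all prefactors bounded. Hence each of $E_{i,j}$ and its approximant is $\lesssim e^{-ctR}\lesssim t^{-2}e^{-c'tR}$ for $t\ge1$, which proves a).

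If $\gamma=0$, Step 1 gives $\lambda_-\ge c>0$; indeed $\lambda_-\to 4\chi\mu/(\mu+\chi)$ as $R\to\infty$. So $e^{-\lambda_\pm t}\le e^{-ct}$. The prefactors behave as follows for large $R$: $\beta\sim\sqrt D\sim (\mu+\chi)R/2$, so $\sqrt D-\beta=4\chi^2R/(\sqrt D+\beta)=O(1)$. Consequently:
\begin{itemize}
\item the $E_{1,1}$ prefactors are $O((1+R)^{-1})$ and $O(1)$;
\item $\chi/\sqrt D=O((1+R)^{-1})$;
\item the $E_{2,2}$ prefactors are $O(1)$.
\end{itemize}
The second exponential in $E_{1,1}$ contains $e^{-\lambda_+t}\le e^{-(\mu+\chi)Rt/2}$, because $\lambda_+\ge\alpha$. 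This can be bounded by $(1+R)^{-1}e^{-ct}$ for $t\ge1$. The approximants are handled the same way: $e^{-\mu tR}\le e^{-\mu\delta t/2}e^{-\mu tR/2}$ together with $Re^{-\mu tR/2}\lesssim1$ for $t\ge1$ gives $e^{-\mu tR}\lesssim(1+R)^{-1}e^{-ct}$ and $Re^{-\mu tR}\lesssim e^{-ct}$. This yields b).

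\textbf{Main obstacle.} I expect the main difficulty to be making the constants uniform in the transition zone near $R\approx\delta$ and, for $\gamma=0$, checking the lower bound on $\lambda_-$ uniformly on $[\delta,\infty)$. I would rely on the product identity $\lambda_-=\det/\lambda_+$ rather than on asymptotic expansions of $\sqrt D$, since it gives the bound in one line for every $R$. The $E_{2,2}$ estimate needs the most care, because it requires the sharper $t^{-2}$ rate. For it I would use the exact identity for $\beta+\sqrt D$ in Step 2 rather than a Taylor expansion, so that the leading term $R/4$ and the $O(R^2)$ remainder are visible exactly.
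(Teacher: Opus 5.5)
Your proof is correct, but it takes a different route from the paper, which gives no argument of its own. The paper simply imports Propositions 5.5--5.7 of \cite{brandolese_2d_2024}. This is legitimate because $E_{1,1}$, $E_{2,1}$, $E_{2,2}$ depend on $\xi$ only through $R=|\xi|^2$ and are literally the same functions as in the planar case.

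Your self-contained derivation rests on the identity $\lambda_-=\lambda_+^{-1}R\bigl(\gamma(\mu+\chi)R+4\chi\mu\bigr)$ together with $2\chi\le\lambda_+\lesssim 1+R$. This gives at once $\lambda_-\gtrsim R$ when $\gamma>0$ and $\lambda_-\gtrsim\min(R,1)$ when $\gamma=0$. The remaining ingredients all check out:
\begin{itemize}
\item the expansion $\lambda_-=\mu R+O(R^2)$;
\item the exact relation $\beta+\sqrt D=4\chi^2R/(\sqrt D-\beta)$, which produces the prefactor $R/4+O(R^2)$ in $E_{2,2}$;
\item the mean-value bound $|e^{-\lambda_-t}-e^{-\mu Rt}|\lesssim tR^2e^{-ctR}$.
\end{itemize}

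Compared with the citation, your route makes the structure of the $\gamma=0$ estimates transparent. The slow eigenvalue saturates at $4\chi\mu/(\mu+\chi)$, so at high frequency only a factor $e^{-ct}$ survives. The extra gain $(1+|\xi|^2)^{-1}$ for $E_{1,1}$ and $E_{2,1}$ comes exactly from $\sqrt D-\beta=O(1)$ and $\chi/\sqrt D=O((1+R)^{-1})$. By contrast, the slow-mode prefactor of $E_{2,2}$ tends to $1$, which is why its remainder is only $Ce^{-ct}$. The citation is shorter; your argument shows directly that nothing in these bounds depends on the dimension.

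The obstacle you anticipate is not a real one. $D$ is continuous and strictly positive on $[0,\infty)$, with $D(0)=4\chi^2$, so all prefactors are smooth in $R$. Hence any fixed $\delta>0$ works for the Taylor remainders on $[0,\delta]$; no smallness is needed. The bounds on $[\delta,\infty)$ then follow from continuity on compact sets plus the large-$R$ asymptotics.
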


We can now prove the main result of this section.
\begin{theorem}
Assume that $(u_0,w_0)\in L^2_\sigma(\R^3) \times L^2(\R^3)$. The solution of \eqref{LMP} has the following asymptotic behavior in $L^2$:
\begin{equation*}
\big\|\ul -e^{\mu t\Delta}u_0-\frac12\curl e^{\mu t\Delta}w_0\big\|_{L^2}\leq \frac Ct \|u_0\|_{L^2}+\frac C{t^{\frac32}} \|\P w_0\|_{L^2}
\qquad\forall t\geq1
\end{equation*}
and 
\begin{equation*}
\big\|\wl -\frac12\curl e^{\mu t\Delta}u_0+\frac14\Delta e^{\mu t\Delta}\P w_0\big\|_{L^2}\leq\frac C{t^{\frac32}} \|u_0\|_{L^2}+\frac C{t^2} \|w_0\|_{L^2}
\qquad\forall t\geq1
\end{equation*}
for some constant $C$ depending only on the material coefficients.
\end{theorem}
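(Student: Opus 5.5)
We work entirely on the Fourier side and use Plancherel. Formula \eqref{solou3} writes $\widehat\ul$ and $\widehat\wl$ as explicit combinations of $\widehat u_0$, $i\xi\times\widehat h_0$ and $\widehat h_0$, where $h_0=\P w_0$, plus an exponentially decaying gradient part in $\widehat\wl$. The candidate profiles also have simple symbols:
\[
\mathcal F\big(e^{\mu t\Delta}u_0+\tfrac12\curl e^{\mu t\Delta}w_0\big)=e^{-\mu t|\xi|^2}\widehat u_0+\tfrac12 e^{-\mu t|\xi|^2}i\xi\times\widehat h_0 ,
\]
where we used $\xi\times\widehat w_0=\xi\times\widehat h_0$. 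Similarly, the symbol of $\tfrac12\curl e^{\mu t\Delta}u_0-\tfrac14\Delta e^{\mu t\Delta}\P w_0$ is $\tfrac12 e^{-\mu t|\xi|^2}i\xi\times\widehat u_0+\tfrac{|\xi|^2}{4}e^{-\mu t|\xi|^2}\widehat h_0$.

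Subtracting, the error for $\ul$ is
\[
\big(E_{1,1}-e^{-\mu t|\xi|^2}\big)\widehat u_0+\big(E_{2,1}-\tfrac12 e^{-\mu t|\xi|^2}\big)i\xi\times\widehat h_0 .
\]
The error for $\wl$ is
\[
\big(E_{2,1}-\tfrac12 e^{-\mu t|\xi|^2}\big)i\xi\times\widehat u_0+\big(E_{2,2}-\tfrac{|\xi|^2}4e^{-\mu t|\xi|^2}\big)\widehat h_0+e^{-4\chi t}e^{-(\gamma+\kappa)t|\xi|^2}\frac{\xi(\xi\cdot\widehat w_0)}{|\xi|^2}.
\]
Each multiplier difference is bounded pointwise by Proposition~\ref{prop-e}, and we take $L^2$ norms term by term.

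For $\gamma>0$, the first term of the $\ul$ error is bounded by $\frac Ct|\widehat u_0|$. For the second we use
\[
\frac Ct|\xi|e^{-ct|\xi|^2}\le \frac{C}{t^{3/2}}\sup_y |y|e^{-cy^2},
\]
so this term is $\le Ct^{-3/2}|\widehat h_0|$, and Plancherel gives $\|\widehat h_0\|_{L^2}\lesssim\|\P w_0\|_{L^2}$. For $\wl$, the same trick bounds the $E_{2,1}$ term by $Ct^{-3/2}|\widehat u_0|$ and the $E_{2,2}$ term by $Ct^{-2}|\widehat h_0|$. The gradient part satisfies $\big|\xi(\xi\cdot\widehat w_0)/|\xi|^2\big|\le|\widehat w_0|$, so it is bounded by $e^{-4\chi t}|\widehat w_0|\lesssim t^{-2}|\widehat w_0|$ for $t\ge1$. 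Since $\|h_0\|_{L^2}\le\|w_0\|_{L^2}$, this gives both estimates.

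For $\gamma=0$ the bounds in Proposition~\ref{prop-e} contain extra terms of the form $\frac{C}{1+|\xi|^2}e^{-ct}$ and $Ce^{-ct}$, and these are the only place where care is needed. The term $\frac{C}{1+|\xi|^2}e^{-ct}|\widehat u_0|$ is $\le Ce^{-ct}|\widehat u_0|\lesssim t^{-1}|\widehat u_0|$. In the cross terms this extra piece multiplies $|\xi|$, and since
\[
\frac{|\xi|}{1+|\xi|^2}\le\tfrac12 ,
\]
it contributes at most $Ce^{-ct}$ times $|\widehat h_0|$ or $|\widehat u_0|$. Exponential decay beats any power for $t\ge1$. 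Finally, $Ce^{-ct}|\widehat h_0|\lesssim t^{-2}|\widehat h_0|$. So the same estimates hold, with $C$ depending only on the coefficients through Proposition~\ref{prop-e}.

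The main point to check is that every multiplier of $\widehat h_0$ in the $\ul$ error carries the factor $|\xi|$, which is what gives the extra $t^{-1/2}$. This is where the structure $\xi\times\widehat h_0$ and the curl-invariance $\xi\times\widehat w_0=\xi\times\widehat h_0$ matter. In the $\gamma=0$ case one must also confirm that the non-heat parts of the multipliers are uniformly bounded by $|\xi|/(1+|\xi|^2)$ or constants, so that they never lose a derivative. Otherwise the proof is routine bookkeeping from Proposition~\ref{prop-e} and Plancherel.
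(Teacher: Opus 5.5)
Your proposal is correct and follows the paper's proof essentially line by line. Both arguments apply Plancherel to the explicit Fourier representation \eqref{solou3} and use the multiplier bounds of Proposition~\ref{prop-e}. Both then use the scaling bound $\sup_\xi|\xi|e^{-ct|\xi|^2}\lesssim t^{-1/2}$ for the terms carrying $i\xi\times\widehat h_0$, and the trivial bound $e^{-4\chi t}\lesssim t^{-2}$ for the gradient part.
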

\begin{proof}
To estimate $\ul $ we use the first line of \eqref{solou3} together with Proposition \ref{prop-e}. Observing also that $\curl h_0=\curl w_0$, we write by the Plancherel theorem, 
\begin{align*}
\big\|\ul -e^{\mu t\Delta}u_0-\frac12\curl e^{\mu t\Delta}w_0\big\|_{L^2} & 
=\big\|\ul -e^{\mu t\Delta}u_0-\frac12\curl e^{\mu t\Delta}h_0\big\|_{L^2}  \\
&=C\big\|\widehat \ul -e^{-\mu t|\xi|^2}\widehat{u_0}-\frac12e^{-\mu t|\xi|^2}i\xi\times \widehat{h_0}\big\|_{L^2}\\
&=C\big\|(E_{1,1}-e^{-\mu t|\xi|^2})\widehat{u_0}+(E_{2,1}-\frac12e^{-\mu t|\xi|^2})i\xi\times \widehat{h_0}\big\|_{L^2}\\
&\leq C\sup_\xi|E_{1,1}-e^{-\mu t|\xi|^2}|\, \|\widehat{u_0}\|_{L^2}+C\sup_\xi(|\xi||E_{2,1}-\frac12e^{-\mu t|\xi|^2}|)\|\widehat{h_0}\|_{L^2}.
\end{align*}
For both cases $\gamma=0$ and $\gamma>0$  we use Proposition \ref{prop-e} to bound
\begin{equation*}
C\sup_\xi|E_{1,1}-e^{-\mu t|\xi|^2}|
\leq \frac Ct \sup_\xi e^{-ct|\xi|^2} +\sup_\xi\frac{C}{1+|\xi|^2}e^{-ct}\leq \frac Ct 
\end{equation*}
and
\begin{equation*}
\sup_\xi(|\xi||E_{2,1}-\frac12e^{-\mu t|\xi|^2}|)
\leq \frac Ct \sup_\xi (|\xi|e^{-ct|\xi|^2})
+\sup_\xi \frac{C|\xi|}{1+|\xi|^2}e^{-c t}
\leq \frac C{t^{\frac32}}
\end{equation*}
so
\begin{equation*}
\big\|\ul -e^{\mu t\Delta}u_0-\frac12\curl e^{\mu t\Delta}w_0\big\|_{L^2} \leq \frac Ct \|u_0\|_{L^2}+\frac C{t^{\frac32}} \|h_0\|_{L^2}.
\end{equation*}

Similarly, we use  the second line of \eqref{solou3} together with Proposition \ref{prop-e} to estimate
\begin{align*}
\big\|\wl -\frac12&\curl e^{\mu t\Delta}u_0+\frac14\Delta e^{\mu t\Delta}\P w_0\big\|_{L^2}  
=C\big\|\widehat \wl -\frac12e^{-\mu t|\xi|^2}i\xi\times \widehat{u_0}-\frac14|\xi|^2 e^{-\mu t|\xi|^2}\widehat{h_0}\big\|_{L^2} \\
&=C\big\|(E_{2,1}-\frac12e^{-\mu t|\xi|^2})i\xi\times \widehat{u_0}+(E_{2,2}-\frac14|\xi|^2 e^{-\mu t|\xi|^2})\widehat{h_0}+e^{-4\chi t}e^{-(\gamma+\kappa)t|\xi|^2}\frac{\xi(\xi\cdot\widehat{w_0})}{|\xi|^2}\big\|_{L^2} \\
&\leq C\sup_\xi(|\xi||E_{2,1}-\frac12e^{-\mu t|\xi|^2}|)\, \|\widehat{u_0}\|_{L^2}+C\sup_\xi|E_{2,2}-\frac14|\xi|^2e^{-\mu t|\xi|^2}|\, \|\widehat{w_0}\|_{L^2}+Ce^{-4\chi t}\|\widehat{w_0}\|_{L^2}\\
&\leq \frac C{t^\frac32}\|\widehat{u_0}\|_{L^2}+\big[\frac C{t^2} \sup_\xi e^{-ct|\xi|^2}+Ce^{-c t}\big]\|\widehat{w_0}\|_{L^2}+Ce^{-4\chi t}\|\widehat{w_0}\|_{L^2}\\
&\leq \frac C{t^{\frac32}} \|u_0\|_{L^2}+\frac C{t^2} \|w_0\|_{L^2}.
\end{align*}
where we also used that $h_0=\P w_0$ and that $\P$ is an $L^2$ orthogonal projection.
\end{proof}

\section{Proof of the main result}
\label{sec:decay}

The aim of this section is to prove Theorem~\ref{th:decay}. We will show that the decay estimates stated in that theorem hold true for mollified solutions with constants independent of $\ep$, then we will pass to the limit $\ep\to0$.

We first collect in the following lemma some energy estimates verified by mollified solutions.
\begin{lemma}
Let $\zep=(\uep,\wep)$ 
be a mollified solution to the mollified problem~\eqref{MMP}, as obtained in Proposition~\ref{existep}.
There exist some constants $c_1,\dots,c_4>0$, depending only on the material coefficients, such that
\begin{equation}\label{EI}
\|\uep(t)\|_{L^2}^2+\|\wep(t)\|_{L^2}^2
+c_1\int_s^t \|\nabla \uep\|_{L^2}^2
+c_2\int_s^t\|\wep\|_{L^2}^2
\le \|\uep(s)\|_{L^2}^2+\|\wep(s)\|_{L^2}^2
\end{equation}
and 
\begin{multline}
\label{EID2}
 \|(\zep-\zl)(t)\|_{L^2}^2- \|(\zep-\zl)(s)\|_{L^2}^2
 + c_3\int_s^t\Bigl( 
 \|\nabla (\uep-\ul)\|_{L^2}^2+\|(\wep-\wl)\|_{L^2}^2\Bigr)\\
 \le c_4\int_s^t \|\uep\|_{L^2}^2\Bigl(\|\ul\|_{L^\infty}^2 +\|\nabla \wl\|_{L^\infty}^2\Bigr)  
\end{multline}
for all $t\geq s\geq0$.
\end{lemma}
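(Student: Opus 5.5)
For \eqref{EI} we start from the energy equality \eqref{enereqep} of Proposition~\ref{existep}. We drop the nonnegative $\gamma$ and $\kappa$ terms. It then suffices to show that for some $c_1,c_2>0$
\[
2\mu\|\nabla u\|_{L^2}^2+2\chi\|\curl u-2w\|_{L^2}^2\ \ge\ c_1\|\nabla u\|_{L^2}^2+c_2\|w\|_{L^2}^2 .
\]
Write $2w=\curl u-(\curl u-2w)$ and use $\|\curl u\|_{L^2}=\|\nabla u\|_{L^2}$. This gives
\[
4\|w\|_{L^2}^2\le 2\|\nabla u\|_{L^2}^2+2\|\curl u-2w\|_{L^2}^2 .
\]
Now split $2\mu\|\nabla u\|^2=\mu\|\nabla u\|^2+\mu\|\nabla u\|^2$. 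Use one half as $c_1\|\nabla u\|^2$. Combine the other half with the $\chi$ term: $\mu\|\nabla u\|^2+2\chi\|\curl u-2w\|^2\ge \min(\mu,2\chi)\,(\|\nabla u\|^2+\|\curl u-2w\|^2)\ge 2\min(\mu,2\chi)\|w\|^2$. So $c_1=\mu$ and $c_2=2\min(\mu,2\chi)$ work.

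For \eqref{EID2}, set $U=\uep-\ul$ and $W=\wep-\wl$. Subtracting \eqref{LMP} from \eqref{MMP} gives the linear micropolar system for $(U,W)$ with forcing terms $-\P((\jep\uep)\cdot\nabla\uep)$ and $-(\jep\uep)\cdot\nabla\wep$. The linear solution $\zl$ is smooth for $t>0$, at least in $u$; in the $\gamma=0$ case $\wl$ is regular if $z_0$ is. So the energy identity for the difference can be justified by expanding $\|\zep-\zl\|^2=\|\zep\|^2-2\langle\zep,\zl\rangle+\|\zl\|^2$. We use \eqref{enereqep} for the first term, the linear energy equality for the last, and test the weak formulation of \eqref{MMP} against $\zl$ for the cross term. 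We then obtain the analogue of \eqref{energ} for $(U,W)$ with right-hand side
\[
-\int (\jep\uep)\cdot\nabla\uep\cdot U-\int(\jep\uep)\cdot\nabla\wep\cdot W .
\]
Since $\dive\jep\uep=0$, we can write $\uep=U+\ul$ and $\wep=W+\wl$. The terms $\int(\jep\uep)\cdot\nabla U\cdot U$ and $\int(\jep\uep)\cdot\nabla W\cdot W$ vanish. What remains is $-\int(\jep\uep)\cdot\nabla\ul\cdot U-\int(\jep\uep)\cdot\nabla\wl\cdot W$.

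We treat the first of these by integrating by parts: it equals $\int(\jep\uep)\cdot\nabla U\cdot\ul$, which is bounded by $\|\uep\|_{L^2}\|\ul\|_{L^\infty}\|\nabla U\|_{L^2}$. This is the reason $\|\ul\|_{L^\infty}$ appears rather than $\|\nabla\ul\|_{L^\infty}$. We bound the second by $\|\uep\|_{L^2}\|\nabla\wl\|_{L^\infty}\|W\|_{L^2}$, using $\|\jep\uep\|_{L^2}\le\|\uep\|_{L^2}$. Young's inequality absorbs $\|\nabla U\|^2$ and $\|W\|^2$ into the dissipation. Here the same coercivity argument as for \eqref{EI}, applied to $(U,W)$, provides $c_3(\|\nabla U\|^2+\|W\|^2)$. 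Integrating from $s$ to $t$ yields \eqref{EID2}.

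The main obstacle is rigor in the case $\gamma=0$. There $\wep$ has no $H^1$ regularity, and $\wl$ is smooth only if $w_0$ is. I would therefore first prove the estimate for mollified data $\jal z_0$ in the linear part, or test against $\zl$ via the weak formulation with $\zl$ as an admissible test function; this is valid because $\nabla\wl\in L^\infty$ is finite in the case of interest. Alternatively, one can pass to the limit using the commutator argument \eqref{commutator} exactly as in Case 3 of Proposition~\ref{existep}. If $\nabla\wl\notin L^1_tL^\infty$, the right-hand side of \eqref{EID2} is infinite and the inequality is trivial.
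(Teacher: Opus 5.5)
Your proof is correct. Its analytic core is the paper's: the same cancellations, the integration by parts that places $\|\ul\|_{L^\infty}$ on the velocity term, the damping that absorbs $\|\nabla\wl\|_{L^\infty}\|\wep-\wl\|_{L^2}$, and Young's inequality. Your coercivity step via $2w=\curl u-(\curl u-2w)$, giving $c_1=\mu$ and $c_2=2\min(\mu,2\chi)$, is a tidier form of the paper's computation. The paper expands $\|\curl u-2w\|_{L^2}^2$ and applies Young with a parameter $\eta\in(\frac12,\frac{\mu+\chi}{2\chi})$.

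The genuine difference is how you justify the energy inequality for $\zep-\zl$. The paper multiplies the difference system by $\zep-\zl$ and repeats, for $\wep-\wl$, the $J_\alpha$-commutator argument of Case~3 of Proposition~\ref{existep}. You instead expand $\|\zep-\zl\|_{L^2}^2$, take $\|\zep\|_{L^2}^2$ from \eqref{enereqep}, and only test the weak formulation against the linear solution.

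Of your options, mollifying the linear data is the one to keep. Testing directly against $\wl$ is not immediate, because $(\jep\uep)\cdot\nabla\wep$ is only in $H^{-1}$ and $\wl$ need not be in $H^1$ even when $\nabla\wl\in L^\infty$. By contrast, $J_\alpha\zl$ is the smooth linear solution with data $J_\alpha z_0$, so testing against it is harmless. The limit $\alpha\to0$ is then immediate. The left-hand side converges, since $\zl\in L^\infty_tL^2$ and $\nabla\ul\in L^2_tL^2$. The right-hand side for $J_\alpha\zl$ is dominated by the one for $\zl$, because $\|J_\alpha f\|_{L^\infty}\le\|f\|_{L^\infty}$.

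What each route buys: yours needs no new commutator estimate, uses only the energy inequality for $\zep$ on $[s,t]$, and makes your case distinction on the finiteness of the right-hand side unnecessary. The paper's route works directly on the difference equation, at the cost of repeating the regularization argument.

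One correction: for $\gamma=0$ the linear solution is not smooth in $u$ for $t>0$. Since $\alpha-\sqrt D\to 4\chi\mu/(\mu+\chi)$ as $|\xi|\to\infty$, the multiplier $|\xi|E_{2,1}$ decays only like $|\xi|^{-1}$ at high frequencies. Hence $\ul(t)$ is in general only in $H^1$ and need not be bounded. This does not hurt your argument. At fixed $\ep$, the velocity pairing only requires $\ul\in L^2_{loc}H^1$ with $\partial_t\ul\in L^2_{loc}H^{-1}$, and the mollified-data route makes everything smooth anyway.
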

\begin{proof}
From the energy equality~\eqref{enereqep} we deduce, for all $0\le s\le t$,
\begin{multline}\label{E1}
\|\uep(t)\|_{L^2}^2+\|\wep(t)\|_{L^2}^2  +2(\mu+\chi)\int_s^t\|\nabla \uep\|_{L^2}^2+2\gamma\int_s^t\|\nabla \wep\|_{L^2}^2
+8\chi\int_s^t\|\wep\|_{L^2}^2\\
+2\kappa\int_s^t\|\div \wep\|_{L^2}^2 
\le   \|\uep(s)\|_{L^2}^2+\|\wep(s)\|_{L^2}^2 +8\chi\int_s^t\|\wep\|_{L^2}\,\|\nabla \uep\|_{L^2}.
\end{multline}
But, for any $\eta>0$, we have $2\|\wep\|_{L^2}\,\|\nabla \uep\|_{L^2}\le \frac{1}{\eta}\|\wep\|_{L^2}^2+\eta\|\nabla \uep\|_{L^2}^2$.
We choose  $\frac12< \eta<\frac{\mu+\chi}{2\chi}$, for example,
\[
\eta=\textstyle\frac14(1+\frac{\mu+\chi}{\chi}).
\]
Dropping in \eqref{E1} the positive terms with coefficients $\gamma$ and $\kappa$ implies \eqref{EI} with constants $c_1,c_2>0$ given by
\begin{equation*}
c_1=2(\mu+\chi-2\chi\eta)\quad\text{and}\quad
c_2=2\chi(4-2/\eta).
\end{equation*}

We prove next \eqref{EID2}. Taking the difference between the mollified system \eqref{MMP} and \eqref{LMP}, we obtain
\begin{equation*}
\left\{
\begin{aligned}
&\dert (\uep-\ul)+\P((\jep\uep)\cdot \nabla \uep)=(\mu+\chi)\Delta (\uep-\ul)
	+2\chi\curl  (\wep-\wl),\\
&\dert (\wep-\wl)+(\jep\uep)\cdot\nabla\wep=\gamma\Delta (\wep-\wl)+\kappa\nabla(\dive  (\wep-\wl))
+2\chi\curl  (\uep-\ul)-4\chi (\wep-\wl),\\
&\dive  (\uep-\ul)=0.
\end{aligned}
\right.
\end{equation*}
We will multiply the first equation by $\uep-\ul$ and the second equation by $\wep-\wl$, add the result and do similar estimates as in the first part of the proof. There are three different things to deal with: to justify that the equation of $\uep-\ul$ can be multiplied by $\uep-\ul$, to justify that the equation of $\wep-\wl$ can be multiplied by $\wep-\wl$ and to estimate the nonlinear terms.

The fact that the equation of $\uep-\ul$ can be multiplied by $\uep-\ul$ follows as in the proof of \eqref{enerequ}. Similarly, the fact that the equation of $\wep-\wl$ can be multiplied by $\wep-\wl$ follows as in the proof of \eqref{enereqw}.

Now we deal with the nonlinear terms. There are two of them. The first one is the following:
\[
\begin{split}
\Bigl|\int ((\jep\uep)\cdot\nabla \uep)\cdot(\uep-\ul)\dd x\Bigr|
&=\Bigl|\int ((\jep\uep)\cdot\nabla \uep)\cdot\ul\Bigr|\\
&=\Bigl|\int ((\jep\uep)\cdot\nabla (\uep-\ul))\cdot\ul\Bigr|\\
&\le \|\jep\uep\|_{L^2}\,\|\nabla(\uep-\ul)\|_{L^2}\,\|\ul\|_{L^\infty}\\
&\le \frac{c_1}{4}\,\|\nabla (\uep-\ul)\|_{L^2}^2+\frac{1}{c_1}\|\uep\|_{L^2}^2\|\ul\|_{L^\infty}^2.
\end{split}
\] 
We estimate the second nonlinear term in a different way, relying in a deeper way on the damping in the equation of~$\wep$: 
\begin{multline}\label{weplmult}
\Big|\int ((\jep\uep)\cdot\nabla \wep)\cdot (\wep-\wl)\Big| =\Big|\int((\jep\uep)\cdot\nabla\wl)\cdot(\wep-\wl)\Big|\\
\le \|\jep\uep\|_{L^2} \|\nabla \wl\|_{L^\infty}\,\|\wep-\wl\|_{L^2}
\le \frac{c_2}{4}\|\wep-\wl\|_{L^2}^2+\frac{1}{c_2}\|\uep\|_{L^2}^2\|\nabla\wl\|_{L^\infty}^2.
\end{multline}

In fact, the first term in the relation above is not a convergent integral. But if we recall the proof of \eqref{enereqw}, this term is actually equal to
\begin{multline*}
\lim_{\al\to0}\int \jal((\jep\uep)\cdot\nabla \wep)\cdot \jal(\wep-\wl)
=\lim_{\al\to0}\Big(\int \jal((\jep\uep)\cdot\nabla (\wep-\wl))\cdot \jal(\wep-\wl)\\
+\int \jal((\jep\uep)\cdot\nabla \wl)\cdot \jal(\wep-\wl)\Big).
\end{multline*}
The same commutator estimate as in \eqref{commutator} shows that the first term on the right-hand side above goes to 0 as $\al\to0$, while the second term passes to the limit easily. Then
\begin{equation*}
\lim_{\al\to0}\int \jal((\jep\uep)\cdot\nabla \wep)\cdot \jal(\wep-\wl)
=\int((\jep\uep)\cdot\nabla\wl)\cdot(\wep-\wl)
\end{equation*}
which justifies the first equality in \eqref{weplmult}.

The same calculations as in the proof of \eqref{EI} together with the two estimates of the nonlinear terms imply the following inequality:
\begin{multline*}
\|(\zep-\zl)(t)\|_{L^2}^2
+c_1\int_s^t \|\nabla (\uep-\ul)\|_{L^2}^2
+c_2\int_s^t\|(\wep-\wl)\|_{L^2}^2
\le \|(\zep-\zl)(s)\|_{L^2}^2\\
+\frac{c_1}{2}\int_s^t\|\nabla (\uep-\ul)\|_{L^2}^2+\frac{2}{c_1}\int_s^t\|\uep\|_{L^2}^2\|\ul\|_{L^\infty}^2
+\frac{c_2}{2}\int_s^t\|\wep-\wl\|_{L^2}^2+\frac{2}{c_2}\int_s^t\|\uep\|_{L^2}^2\|\nabla\wl\|_{L^\infty}^2.
\end{multline*}
Relation \eqref{EID2} now follows with constants $c_3,c_4>0$  given by
\[
 c_3=\min\{\textstyle\frac{c_1}{2},\frac{c_2}{2}\}
 \quad\text{and}\quad
 c_4=\max\{\frac{2}{c_1},\frac{2}{c_2}\}.
\]
\end{proof}

The main part of the proof of Theorem~\ref{th:decay} consists in showing the following $\ep$-independent decay estimates for mollified solutions. 
\begin{proposition}
\label{prop:decay}
Let $\mu,\chi>0$ and $\gamma,\kappa\ge0$.
Let $(u_0,w_0)\in L^2_\sigma(\R^3)\times L^2(\R^3)$, $\varepsilon>0$
and $\zep=(\uep,\wep)$ be a mollified solution as constructed in~Proposition~\ref{existep}.
Then we have the following
\begin{itemize}
\item[i)] $\|\zep(t)\|_{L^2}\to0$ as $t\to+\infty$ uniformly with respect to $\varepsilon$.
\item[ii)] If $0\le \Gamma\le 5/2$ and if there exists $C_\Gamma>0$ such that the solution $\zl$ of the linear problem~\eqref{LMP}
starting from $z_0$ satisfies $\|\zl(t)\|_{L^2}^2\le C_\Gamma(1+t)^{-\Gamma}$,
 then $\|\zep(t)\|_{L^2}^2\le C(1+t)^{-\Gamma}$,
 for some constant $C=C(\mu,\chi,C_\Gamma,\|z_0\|_{L^2})>0$ independent of $t$ and of $\varepsilon$.
\item[iii)] Under the condition of the previous item (when $\gamma=0$ we also require the additional
condition $\int (1+|\xi|)|\widehat z_0(\xi)|\dd\xi <\infty $), we have, for some constant $C=C(\mu,\gamma,\chi,C_\Gamma,\|z_0\|_{L^2})>0$
(depending also on $\int (1+|\xi|)|\widehat z_0(\xi)|\dd\xi$
in the case $\gamma=0$), independent of $t$ and of $\varepsilon$,
	\[
	\|(\zep-\zl)(t)\|_{L^2}^2\le 
	\begin{cases}
	C(1+t)^{-1/2}\zeta(t) &\text{if $\Gamma=0$}\\
	C(1+t)^{-1/2-2\Gamma}& \text{if $0<\Gamma<1$}\\
	C(1+t)^{-5/2}(\log(e+t))^2 &\text{if $\Gamma=1$}\\
	C(1+t)^{-5/2}&\text{if $1<\Gamma\le 5/2$}
	\end{cases}
	\]
	with $\lim_{t\to+\infty}\zeta(t)=0$ uniformly with respect to $\varepsilon$.
\end{itemize} 
\end{proposition}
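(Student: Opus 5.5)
The plan is a Fourier splitting argument in the style of Schonbek and Wiegner, applied to the inequalities \eqref{EI} and \eqref{EID2}. The damping term $c_2\int\|\wep\|_{L^2}^2$ stands in for the missing dissipation of $w$ when $\gamma=0$. All constants will depend only on $\|z_0\|_{L^2}$, the material coefficients and $C_\Gamma$, and this is what makes the bounds uniform in $\ep$.

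\emph{Step 1: a differential inequality for the energy.} Inequality \eqref{EI} holds for all $t\ge s\ge 0$, so $E(t)=\|\zep(t)\|_{L^2}^2$ is nonincreasing and satisfies, in the sense of distributions,
\[
E'(t)+c_1\|\nabla\uep\|_{L^2}^2+c_2\|\wep\|_{L^2}^2\le0 .
\]
Fix a radius $\rho(t)^2=m/(c_1(1+t))$ with $m$ large, and split
\[
c_1\|\nabla\uep\|_{L^2}^2\ge c_1\rho^2\|\uep\|_{L^2}^2-c_1\rho^2\int_{|\xi|\le\rho}|\widehat\uep|^2 .
\]
For $t$ large, $c_2\ge c_1\rho^2$, so we obtain
\[
E'+\frac{m}{1+t}E\le \frac{m}{1+t}\int_{|\xi|\le\rho}|\widehat\uep|^2 .
\]
Only the low frequencies of $u$ appear on the right; this is why no hypothesis on $\wl$ is needed. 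The weight $(1+t)^m$ then gives
\[
(1+t)^mE(t)\lesssim E(0)+\int_0^t m(1+s)^{m-1}\int_{|\xi|\le\rho(s)}|\widehat\uep(s)|^2\,ds .
\]

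\emph{Step 2: the low-frequency estimate.} Use Duhamel with the kernel $K$ of \eqref{formulaK}:
\[
\widehat\zep(t)=\widehat\zl(t)+\int_0^tK(\xi,t-\tau)\widehat N(\tau)\,d\tau ,
\]
where $N$ collects $-\P\dive(\jep\uep\otimes\uep)$ and $-\dive(\jep\uep\otimes\wep)$. The product structure gives
\[
|\widehat N(\xi,\tau)|\le|\xi|\,\|\uep(\tau)\|_{L^2}\|\zep(\tau)\|_{L^2},
\]
and $K$ is bounded by \eqref{infoK}. Hence, on $|\xi|\le\rho$,
\[
|\widehat\uep|\le|\widehat\ul|+C\rho\int_0^t\|\zep\|_{L^2}^2 .
\]
Integrating over the ball $|\xi|\le\rho$ yields
\[
\int_{|\xi|\le\rho}|\widehat\uep|^2\le 2\|\ul\|_{L^2}^2+C\rho^5\Big(\int_0^t E\Big)^2 .
\]

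For i), use Lebesgue dominated convergence on $\widehat\zl=K\widehat z_0$ via \eqref{infoK}; this depends on $z_0$ but not on $\ep$. Combined with the crude bound $E\le\|z_0\|_{L^2}^2$, it gives decay to $0$ uniformly in $\ep$.

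For ii), bootstrap in the standard Wiegner way. First plug $E\le C$ into the nonlinear term to get $E\lesssim(1+t)^{-\min(\Gamma,1/2)}$, then reinsert. The nonlinear contribution $(1+t)^{-5/2}(\int_0^tE)^2$ stays below $(1+t)^{-\Gamma}$ whenever $\Gamma\le 5/2$, with logarithmic bookkeeping at $\Gamma=1$. Since Step 1 involves only $\|\ul\|_{L^2}^2$, the hypothesis on $\zl$ (or on $\ul$ alone, by Proposition~\ref{prop:impro}) suffices.

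\emph{Step 3: the difference estimate.} Repeat Steps 1–2 for $D(t)=\|\zep-\zl\|_{L^2}^2$ using \eqref{EID2}. The forcing is
\[
c_4\|\uep\|_{L^2}^2\big(\|\ul\|_{L^\infty}^2+\|\nabla\wl\|_{L^\infty}^2\big)\lesssim(1+t)^{-\Gamma}(1+t)^{-3/2-\Gamma}
\]
by ii) and \eqref{AA}. The hypothesis $\int(1+|\xi|)|\widehat z_0|<\infty$ enters here when $\gamma=0$. For the low-frequency term of $\uep-\ul$, the Duhamel bound now has no linear part:
\[
\int_{|\xi|\le\rho}|\widehat\uep-\widehat\ul|^2\lesssim\rho^5\Big(\int_0^tE\Big)^2 .
\]
Inserting these bounds with the weight $(1+t)^m$ and computing $\int_0^tE$ under each range of $\Gamma$ produces the four cases: $(1+t)^{-5/2}(\int_0^tE)^2$ gives $(1+t)^{-1/2-2\Gamma}$ for $\Gamma<1$, the $\log^2$ at $\Gamma=1$, and $(1+t)^{-5/2}$ for $\Gamma>1$. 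The forcing term decays faster in every case. For $\Gamma=0$, use $E\le\eta(t)\to0$ uniformly from i), so $\int_0^tE=o(t)$, which gives $(1+t)^{-1/2}\zeta(t)$.

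\emph{Main obstacle.} The hardest step will be the rigorous low-frequency Duhamel bound for $\gamma=0$. The kernel $K$ is only bounded, not parabolic, at high frequency, and $\jep\uep\otimes\wep$ has $w$ merely in $L^2$. I would handle this by writing the $w$-nonlinearity in divergence form and using only $|\xi|\le\rho\le1$, where \eqref{infoK} suffices.
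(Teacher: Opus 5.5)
Your proposal is correct and follows essentially the paper's route: Fourier splitting applied to \eqref{EI} and \eqref{EID2}, with the damping $c_2\|\wep\|_{L^2}^2$ replacing the missing dissipation of $w$, then a Duhamel low-frequency bound using only the boundedness of $K$, then the Wiegner-type bootstrap in $\Gamma$. The paper does the weighted time integration with a singular Gronwall lemma instead of your distributional inequality, and bounds $I_{\uep}$ by the full $I_{\zep}$ where you keep only the low frequencies of $\uep$, a harmless refinement. One small inaccuracy: for $0\le\Gamma<1$ the forcing term in your Step 3 contributes at exactly the same rate as the nonlinear low-frequency term, namely $(1+t)^{-1/2-2\Gamma}$ (resp.\ $(1+t)^{-1/2}o(1)$ when $\Gamma=0$), not a faster one, but this does not change the conclusion.
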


\begin{proof}[Proof of Proposition~\ref{prop:decay}]
In this proof,  we denote the constants appearing in our estimates that are independent of~$t$ and $\varepsilon$ with the same letter $C$, even though they can change from line to line.
All the constants in this proof will be independent of~$\kappa\ge0$, but they may depend on the other parameters $\mu,\gamma,\chi$ and on the initial datum, as specified in the statement of the proposition. The proof proceeds in several steps.

\paragraph{Step 1. Fourier splitting.}

Let $0<\delta\le \min(c_1,c_3)$.
Let us introduce the function
\begin{equation}
\label{choi-g}
g(t):=\sqrt{\textstyle\frac{10}{\delta}}(1+t)^{-1/2}.
\end{equation}
Let $t_0$ be large enough, so that $t_0 \ge 10/\delta$ and $c_2(1+t_0)\ge 10$.
The above choice ensures that
\begin{equation}
\label{C1g} 
\forall t\ge t_0\colon\quad
c_2 \ge \delta\, g(t)^2\quad\text{and}\quad 0<g(t)\le 1
\end{equation}
that will be useful later on.

Following Schonbek's Fourier splitting method, see \cite{Sch85}, we introduce
\begin{equation*}
I_{\uep}(t)=\int_{|\xi|\le g(t)} |\widehat \uep(\xi,t)|^2\dd \xi,
\qquad
I_{\wep}(t)=\int_{|\xi|\le g(t)} |\widehat \wep(\xi,t)|^2\dd \xi,
\qquad
I_{\zep}(t)=\int_{|\xi|\le g(t)} |\widehat \zep(\xi,t)|^2\dd \xi.
\end{equation*}
We have
\begin{multline}\label{fin*}
\|\nabla \uep(t)\|_{L^2}^2
=\frac{1}{(2\pi)^3}\int |\xi|^2|\widehat\uep(\xi,t)|^2\dd\xi
\ge \frac{g(t)^2}{(2\pi)^3}\int_{|\xi|\ge g(t)}|\widehat \uep(\xi,t)|^2\dd\xi\\
=g(t)^2\|\uep(t)\|_{L^2}^2 - \frac{g(t)^2}{(2\pi)^3} I_{\uep}(t).
\end{multline}
By~\eqref{EI}, for all $t\geq s\ge t_0$,
\begin{equation*}
\begin{split}
\|\uep(t)\|_{L^2}^2+\|\wep(t)\|_{L^2}^2 
    &+\int_s^t 
    \Bigl[c_1 g(r)^2\|\uep(r)\|_{L^2}^2
      + c_2\|\wep(r)\|_{L^2}^2
    \Bigr]
    \dd r \\
    &\le 
    \int_s^t 
    \frac{ c_1 g(r)^2}{(2\pi)^3}  I_{\uep}(r)\dd r
    +\|\uep(s)\|_{L^2}^2+\|\wep(s)\|_{L^2}^2.
\end{split}
\end{equation*}
Using \eqref{C1g} we get, for all $t\geq s\ge t_0$,
\begin{equation}
\label{befow}
\|\zep(t)\|_{L^2}^2 
     +\int_s^t
    \delta g(r)^2 \|\zep(r)\|_{L^2}^2\dd r 
    \le 
    C\int_s^t  g(r)^2I_{\zep}(r)\dd r     +\|\zep(s)\|_{L^2}^2,
\end{equation}

Let 
\[
e(t)=\exp\Bigl(\int_{t_0}^t \delta\, g(s)^2\dd s\Bigr)
\]
so that $e(t_0)=1$. Applying the singular Gronwall-type lemma in \cite[Lemma~3.6]{Brando-Revista},
we deduce
\begin{equation}
\label{wis1}
\begin{split}
e(t)\|\zep(t)\|_{L^2}^2
&\le \|\zep(t_0)\|_{L^2}^2
    	+C\int_{t_0}^t e(s)g(s)^2I_{\zep}(s)\dd s.
\end{split}    	    	  	
\end{equation}
(Notice that~\eqref{wis1} would be obvious if the map $t\mapsto \|\zep(t)\|_{L^2}^2$ were differentiable
as in this case~\eqref{befow} leads to differential inequality for $\|\zep(t)\|_{L^2}^2$. The above mentioned
singular Gronwall-type lemma provides a rigorous justification).

\paragraph{Step 2. Low frequency estimates.}

We recall the notation introduced in \eqref{notationsgr}. Applying the Duhamel formula in \eqref{MMP} and taking the Fourier transform  we get
\begin{equation*}
\widehat \zep(\xi,t)
 = \widehat{\zl}(\xi,t)
 	+\int_0^t K(\xi,t-s)\mathcal{F}
  	\begin{pmatrix}
	-\P\dive ((\jep\uep)\otimes \uep)\\
	-\dive ((\jep\uep)\otimes \wep)
	\end{pmatrix}
	(\xi,s)\dd s.	
\end{equation*}
But $|\mathcal{F}((\jep \uep)\otimes \uep)|(\xi,t)\le \|\uep(t)\|_{L^2}^2\le \|\zep(t)\|_{L^2}^2$
and $|\mathcal{F}((\jep\uep)\otimes \wep)|(\xi,t)\le \|\uep(t)\|_{L^2}\,\|\wep\|_{L^2}\le \|\zep(t)\|_{L^2}^2$.
Moreover, the operators $\P\dive $ and $\dive $ are Fourier multipliers with symbol
bounded by $C|\xi|$.
By the first property in~\eqref{infoK}, all the terms in previous formula are well defined.
Hence,
\begin{equation*}
\begin{split}
|\widehat \zep(\xi,t)|
 &\lesssim |\widehat{\zl}(\xi,t)|
 	+C|\xi| \int_0^t \|\zep(s)\|_{L^2}^2\dd s
\end{split}
\end{equation*}
Integrating $|\widehat \zep(\xi,t)|^2$ in the ball $\{|\xi|\le g(t)\}$, we get
\begin{equation}
\label{iteration0c}
I_{\zep}(t)\le C\|e^{tM}z_0\|_{L^2}^2+Cg(t)^{5}\Bigl(\int_0^t \|\zep(s)\|_{L^2}^2\dd s\Bigr)^2.
\end{equation}

Then we deduce from~\eqref{wis1}
\begin{multline}
\label{wis2}
e(t)\|\zep(t)\|_{L^2}^2
\le \|\zep(t_0)\|_{L^2}^2
    	+C\int_{t_0}^t e(s)g(s)^2 \|e^{sM}z_0\|_{L^2}^2\dd s\\
	+C\int_{t_0}^t e(s)g(s)^7\Bigl(\int_0^s\|\zep(\tau)\|_{L^2}^2\dd\tau\Bigr)^2\dd s.
\end{multline}

But from the energy inequality~\eqref{EI}, for any $t$,
$
\|\zep(t)\|_{L^2}^2\le \|z_0\|_{L^2}^2.
$
Then we deduce from the above relation
\begin{equation*}
e(t)\|\zep(t)\|_{L^2}^2\le \|z_0\|_{L^2}^2+C\int_{t_0}^t e(s)g(s)^2\|e^{sM}z_0\|_{L^2}^2\dd s
+C\|z_0\|_{L^2}^4\int_{t_0}^te(s)s^2g(s)^7\dd s.
\end{equation*}
Recalling that $g(t)^2=\textstyle\frac{10}{\delta}(1+t)^{-1}$, and the definition of $e$, we have
\begin{equation}\label{defet}
 e(t)=\displaystyle\frac{(1+t)^{10}}{(1+t_0)^{10}}.
\end{equation}
Then, for all $t\ge t_0$,
\begin{equation*}
\begin{split}
(1+t)^{10}\|\zep(t)\|_{L^2}^2
&\le C\|z_0\|_{L^2}^2+C\int_{t_0}^t (1+s)^{9}\|e^{sM}z_0\|_{L^2}^2\dd s
  +C\|z_0\|_{L^2}^4\int_{t_0}^t (1+s)^{12-7/2}.\\
\end{split}
\end{equation*}
We infer that
\begin{equation}
\label{nodeca}
\|\zep(t)\|_{L^2}^2\le C\Psi(t),
\end{equation}
where
\begin{equation*}
\Psi(t):=\frac{1}{1+t}\int_{t_0}^t \|e^{sM}z_0\|_{L^2}^2\dd s+(\|z_0\|_{L^2}^2+\|z_0\|_{L^2}^4)(1+t)^{-1/2}.
\end{equation*}
We recall \eqref{notationsgr} and the properties of the kernel $K$ stated in \eqref{infoK}. The dominated convergence theorem implies that 
$\|e^{tM}z_0\|_{L^2}^2\to0$ as $t\to+\infty$. We deduce that $\Psi(t)\to0$ as $t\to+\infty$, and so
\[
\|\zep(t)\|_{L^2}^2\to0 \qquad\text{as $t\to+\infty$}
\]
uniformly in $\varepsilon$.

Let us now prove Assertion ii).

\paragraph{Step 3. Algebraic decay.}

Let $\Gamma>0$ and $C_\Gamma>0$ be such that $\|e^{tM}z_0\|_{L^2}^2\le C_\Gamma(1+t)^{-\Gamma}$ for all~$t\ge0$.  From~\eqref{nodeca} we can assert that
$\|\zep(t)\|_{L^2}^2\le C\Psi(t)\le C(1+t)^{-\min\{\Gamma,1/2\}}$.

Therefore, Assertion ii) is already established in the case $0<\Gamma\le1/2$. Let now  $1/2<\Gamma\le 5/2$.
In this case we can go back to~\eqref{wis2} and use therein the estimate $\|\zep(t)\|_{L^2}^2\le C(1+t)^{-1/2}$ obtained at the beginning of the present step.
In particular, $\bigl(\int_0^s\|\zep(\tau)\|_{L^2}^2\dd\tau\bigr)^2\le C(1+s)$, and we can now use this in \eqref{wis2} to obtain
\begin{equation*}
\begin{split}
(1+t)^{10}\|\zep(t)\|_{L^2}^2
&\le C+C\int_{t_0}^t (1+s)^{9}\|e^{sM}z_0\|_{L^2}^2\dd s
  +C\int_{t_0}^t (1+s)^{11-7/2}.\\
\end{split}
\end{equation*}

This allows to improve~\eqref{nodeca}, 
by an estimate of the form 
\[
\|\zep(t)\|_{L^2}^2\le C(1+t)^{-\Gamma} +C(1+t)^{-3/2},
\]
which is enough to establish Assertion~ii) in the case $1/2<\Gamma\le 3/2$.
After one more iteration of the same argument we obtain
\[
\|\zep(t)\|_{L^2}^2\le C(1+t)^{-\Gamma}+C(1+t)^{-5/2},
\]
which can no longer be improved.
Assertion~ii) is completely proved.

We prove now Assertion~iii). 

\paragraph{Step 4. Estimate for $\|\zep-\zl\|_{L^2}^2$.}
First of all, the previously obtained decay on $\|\zep(t)\|_{L^2}^2$
implies
\[
\|\zep(t)\|^2\le (1+t)^{-\Gamma}\zeta_\Gamma(t),
\]
where $\zeta_\Gamma$ is independent on $\varepsilon$, and $\zeta_\Gamma(t)=O(1)$ 
if $0<\Gamma\le 5/2$ and $\zeta_\Gamma(t)=o(1)$ if $\Gamma=0$, as $t\to+\infty$.

We rely on the energy estimate~\eqref{EID2}.
Recalling that $0<\delta\le c_3$, we can write, for all $t\ge s\ge t_0$,
\begin{equation*}
\begin{split}
 \|(\zep-\zl)(t)\|_{L^2}^2 - \|(\zep-\zl)(s)\|_{L^2}^2
 + \delta\int_s^t\Bigl( \|\nabla &(\uep-\ul)\|_{L^2}^2+\|(\wep-\wl)\|_{L^2}^2\Bigr)\\
 &\le c_4\int_s^t \|\uep\|_{L^2}^2\Bigl(\|\ul\|_{L^\infty}^2+\|\nabla \wl\|_{L^\infty}^2\Bigr)\\
 & \le C \int_s^t \zeta_\Gamma(r)\,r^{-3/2-2\Gamma}\dd r. 
 \end{split}
\end{equation*}
Here, in the last inequality we used~\eqref{AA} and the previously obtained decay on $\|\uep(t)\|_{L^2}^2$.

Let us denote
\[
I_{\uep-\ul}(t):=\int_{|\xi|\le g(t)} |\widehat{\uep}-\widehat{\ul}\,|^2(\xi,t)\dd\xi,
\qquad\text{and}\qquad
I_{\zep-\zl}(t):=\int_{|\xi|\le g(t)} |\widehat{\zep}-\widehat{\zl}\,|^2(\xi,t)\dd\xi.
\]
By Fourier splitting, see \eqref{fin*},
\[
\|\nabla(\uep-\ul)(t)\|_{L^2}^2\ge g(t)^2 \|\uep-\ul\|_{L^2}^2-\frac{g(t)^2}{(2\pi)^3}I_{\uep-\ul}(t),
\]
so, for all $t\ge s\ge t_0$,
\begin{align}
\|(\zep-\zl)(t)\|_{L^2}^2-\|(\zep-\zl)(s)\|_{L^2}^2
+\delta\int_s^t&\Bigl( g(r)^2\|(\uep-\ul)(r)\|_{L^2}^2+\|(\wep-\wl)(r)\|_{L^2}^2\Bigr)\dd r\notag\\
&\le C  \int_s^t \Bigl(g(r)^2I_{\uep-\ul}(r)+\zeta_\Gamma(r)\,r^{-3/2-2\Gamma}\Bigr)\dd r\label{zepldif}\\
&\le C  \int_s^t \Bigl(g(r)^2I_{\zep-\zl}(r)+\zeta_\Gamma(r)\,r^{-3/2-2\Gamma}\Bigr)\dd r.\notag
\end{align}
We bound now $I_{\zep-\zl}$. From the Duhamel formula, since $z-\zl$ vanishes at $t=0$, we have
\[
(\widehat \zep-\widehat{\zl}\,)(t)=
 	\int_0^t \widehat K(\xi,t-s)\mathcal{F}
  	\begin{pmatrix}
	-\P\dive ((\jep\uep)\otimes \uep)\\
	-\dive ((\jep \uep)\otimes \wep)
	\end{pmatrix}
	(\xi,s)\dd s.	
\]
Hence, as in~\eqref{iteration0c}, using the $L^2$-decay of $\zep$ established in the previous steps, we obtain
\begin{equation}
\label{low-d}
I_{\zep-\zl}(t)
\le C g(t)^5\Bigl(\int_0^t \|\zep(s)\|_{L^2}^2\dd s\Bigr)^2
\le Cg(t)^5\Bigl(\int_0^t \zeta_\Gamma(s)\,(1+s)^{-\Gamma}\dd s\Bigr)^2.
\end{equation}

We recall that for all $t\ge t_0$,  $g(t)^2\le1$, see \eqref{C1g}. From \eqref{zepldif} and \eqref{low-d} we obtain, for all $t\geq s\ge t_0$,
\begin{equation*}
\begin{split}
\|(\zep-\zl)(t)\|_{L^2}^2- \|(\zep-\zl)(s)\|_{L^2}^2&
 +\int_s^t \delta g(r)^2\|(\zep-\zl)(r)\|_{L^2}^2\dd r\\
&\le C \int_s^t \Bigl( g(r)^2I_{\zep-\zl}(r)+\zeta_\Gamma(r)\,r^{-3/2-2\Gamma}\Bigr)\dd r\\
&\le C \int_s^t \Bigl(g(r)^7\Bigl(\int_0^t \zeta_\Gamma(r)\,(1+r)^{-\Gamma}\dd r\Bigr)^2
+\zeta_\Gamma(r)\,r^{-3/2-2\Gamma}\Bigr)\dd r.
\end{split}
\end{equation*}

Let $e(t)$ as in Step~1.
Applying  the singular Gronwall-type lemma from \cite[Lemma~3.6]{Brando-Revista} in the inequality above, we obtain
\begin{multline*}
e(t)\|(\zep-\zl)(t)\|_{L^2}^2
\le \|(\zep-\zl)(t_0)\|_{L^2}^2 +C\int_{t_0}^t e(s)g(s)^7\Bigl(\int_0^s \zeta_\Gamma(r)\,(1+r)^{-\Gamma}\dd r\Bigr)^2\dd s\\
+C\int_{t_0}^t e(s)\zeta_\Gamma(s)\,s^{-3/2-2\Gamma}\dd s.
\end{multline*}

 Recalling the expression of  $g$, see~\eqref{choi-g}, and the definition of the function~$e$, see \eqref{defet},
we obtain
\begin{multline*}
(1+t)^{10}\|(\zep-\zl)(t)\|_{L^2}^2
\le C 
+C\int_{t_0}^t (1+s)^{10-7/2}\Bigr(\int_0^s\zeta_\Gamma(\tau)\,(1+\tau)^{-\Gamma}\dd \tau\Bigr)^2\dd s\\
+ C\int_{t_0}^t\zeta_\Gamma(s)\,(1+s)^{10-3/2-2\Gamma}\dd s.
\end{multline*}
Hence, recalling that $\zeta_\Gamma(t)$ is a bounded function, we obtain
\[
\|(\zep-\zl)(t)\|_{L^2}^2\le C
\begin{cases}
(1+t)^{-1/2-2\Gamma}& \text{if $0\le\Gamma<1$}\\
(1+t)^{-5/2}(\log(e+t))^2 &\text{if $\Gamma=1$}\\
(1+t)^{-5/2}&\text{if $1<\Gamma\le 5/2$}.
\end{cases}
\]
When $\Gamma=0$, we can use the stronger property that $\zeta_\Gamma(t)=o(1)$ as $t\to+\infty$.
Then, in this case, we can slightly improve the decay and obtain, uniformly with respect to $\varepsilon$,
\[
\|(\zep-\zl)(t)\|_{L^2}^2\le (1+t)^{-1/2}o(1),\qquad\text{as $t\to+\infty$}.
\]
\end{proof}

\begin{proof}[Proof of Theorem~\ref{th:decay}]
The first assertion of Theorem~\ref{th:decay} follows from Proposition~\ref{prop:decay} and from the assertion e) of Proposition~\ref{prop:exist}.
Let us prove the second and third assertion of Theorem~\ref{th:decay}.
By our assumption  
$\|\ul(t)\|_{L^2}^2=O(t^{-\Gamma})$ as $t\to+\infty$, 
for some $\Gamma\in [0,5/2]$. 
We claim that this implies
\begin{equation*}
\|\wl(t)\|_{L^2}^2=O(t^{-\Gamma-1}),
\qquad\text{as $t\to+\infty$}.
\end{equation*}
This crucial property of the linear micropolar system will be established
in the appendix (see Proposition~\ref{prop:impro} below), as a consequence of a remarkable enstrophy identity.
In particular, it follows that $\|\zl(t)\|_{L^2}^2=O(t^{-\Gamma})$ as $t\to+\infty$.
Now the two last assertions of Theorem~\ref{th:decay} directly follow applying Proposition~\ref{prop:decay} and assertions e) and f) of Proposition~\ref{prop:exist}.
\end{proof}

\section{Further remarks}

We discuss here some consequences of Theorem~\ref{th:decay}. An important application of Theorem~\ref{th:decay} is the improvement of the decay of $\|w(t)\|_{L^2}^2$.
The first assertion of the following corollary is interesting in the case $\Gamma=0$, i.e., when there is
no additional restriction on the data other than the usual $L^2$-condition.

\begin{corollary}[Stronger decay for $w$]
\label{cor:decay}
Let $\mu,\chi>0$ and $\gamma,\kappa\ge0$.
Let $(u,w)$ be a restricted Leray solution
of~\eqref{MP}, with initial data $z_0=(u_0,w_0)\in L^2_\sigma(\R^3)\times L^2(\R^3)$.
\begin{itemize}
\item[i)] The decay of $\|w(t)\|_{L^2}^2$ obtained in Theorem~\ref{th:decay}, i) can be improved as follows:
	\begin{itemize}
	\item If $\gamma>0$, then $t\|w(t)\|_{L^2}^2\to0$ as $t\to+\infty$.
	\item If $\gamma=0$ and  $\int (1+|\xi|)|\widehat z_0(\xi)|\dd\xi <\infty $, 
	then $t^{1/2}\|w(t)\|_{L^2}^2\to0$ as $t\to+\infty$.
       \end{itemize}
\item[ii)]  Let $0<\Gamma\le 5/2$  and $\|\ul(t)\|_{L^2}^2=O(t^{-\Gamma})$ as $t\to+\infty$.
	\begin{itemize}
	\item If $\gamma>0$, 	then $\|w(t)\|_{L^2}^2=O(t^{-1-\Gamma})$ as $t\to+\infty$.
	\item If $\gamma=0$  and  $\int (1+|\xi|)|\widehat z_0(\xi)|\dd\xi <\infty $, 
		then,  as $t\to+\infty$,
		\[\|w(t)\|_{L^2}^2=
			\begin{cases} 
			O(t^{-1/2-2\Gamma})&\text{if $0<\Gamma\le1/2$}\\
			O(t^{-1-\Gamma})&\text{if $1/2\le \Gamma\le3/2$}\\
			O(t^{-5/2})&\text{if $3/2\le \Gamma\le5/2$}.
			\end{cases} 
			\]
       \end{itemize}
\end{itemize}
\end{corollary}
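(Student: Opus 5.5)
The plan is to split $w=\wl+(w-\wl)$ and bound each piece:
\[
\|w(t)\|_{L^2}^2\le 2\|\wl(t)\|_{L^2}^2+2\|(w-\wl)(t)\|_{L^2}^2\le 2\|\wl(t)\|_{L^2}^2+2\|(z-\zl)(t)\|_{L^2}^2 .
\]
The second term is controlled by Theorem~\ref{th:decay}~iii), via Proposition~\ref{prop:decay} and item f) of Proposition~\ref{prop:exist}. For the linear part $\wl$ I will use the enstrophy-based improvement announced in the proof of Theorem~\ref{th:decay} (Proposition~\ref{prop:impro} in the appendix): $\|\ul(t)\|_{L^2}^2=O(t^{-\Gamma})$ implies $\|\wl(t)\|_{L^2}^2=O(t^{-\Gamma-1})$. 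When $\gamma=0$ the hypothesis $\int(1+|\xi|)|\widehat z_0|<\infty$ is exactly what Theorem~\ref{th:decay}~iii) needs, so both pieces are available.

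\textbf{Item ii).} For $\gamma=0$ I combine the two bounds. The linear term gives $t^{-1-\Gamma}$. The fluctuation gives $t^{-1/2-2\Gamma}$ for $\Gamma<1$, $t^{-5/2}\log^2 t$ for $\Gamma=1$, and $t^{-5/2}$ for $\Gamma>1$.
\begin{itemize}
\item If $\Gamma\le1/2$, then $1/2+2\Gamma\le 1+\Gamma$, so the fluctuation dominates and the rate is $O(t^{-1/2-2\Gamma})$.
\item If $1/2\le\Gamma\le3/2$, the linear term dominates. Here $1/2+2\Gamma\ge1+\Gamma$ for $\Gamma<1$. At $\Gamma=1$ the term $t^{-5/2}\log^2t$ is $O(t^{-2})$. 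For $\Gamma\in(1,3/2]$, $t^{-5/2}\le t^{-1-\Gamma}$. So the rate is $O(t^{-1-\Gamma})$.
\item If $\Gamma\ge3/2$, the fluctuation $t^{-5/2}$ dominates, giving $O(t^{-5/2})$.
\end{itemize}
For $\gamma>0$ the claimed rate $t^{-1-\Gamma}$ exceeds what item iii) gives for the full difference. So I would rework the $w$-part of the fluctuation directly. In \eqref{EID2}, the term $c_3\int\|w-\wl\|_{L^2}^2$ comes from the damping $-4\chi w$. Using the $\gamma\Delta$ dissipation, I would run a separate estimate for $w_\ep-\wl$ alone: multiply its equation by $w_\ep-\wl$ and keep the damping as a decay term. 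This gives a differential inequality $\frac{d}{dt}Y+cY\le C\|\nabla(u_\ep-\ul)\|_{L^2}^2+\text{(nonlinear)}$, hence $Y(t)\lesssim\int_0^t e^{-c(t-s)}(\dots)\,ds$.

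This step is the main obstacle. The forcing $\|\nabla(u_\ep-\ul)\|^2$ is only known in time-integrated form, and the nonlinear term $(\jep\uep)\cdot\nabla\wep$ must be handled with $\gamma\|\nabla w\|^2$. My first attempt will be to bound it by $\|\uep\|_{L^\infty}$-free quantities, using $\|\uep\|_{L^2}^2\|\nabla\wl\|_{L^\infty}^2$ together with the Schonbek-type time averaging over $[t/2,t]$. The aim is a bound of the form $\|w-\wl\|^2\lesssim t^{-1}\sup_{[t/2,t]}\|z-\zl\|^2+\dots$, which would yield the extra factor $t^{-1}$.

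\textbf{Item i).} This case ($\Gamma=0$) follows the same pattern. Since $\|\ul\|_{L^2}\to0$, the appendix mechanism should give $t\|\wl(t)\|_{L^2}^2\to0$; I would verify this with a dominated-convergence variant of Proposition~\ref{prop:impro}. For $\gamma=0$, iii) gives $t^{1/2}\|w-\wl\|^2\lesssim\zeta(t)\to0$. For $\gamma>0$ the extra power $t^{1/2}$ again comes from the dissipative argument above.
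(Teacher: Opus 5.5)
Your treatment of $\gamma=0$ is correct and is the paper's argument: split $w=\wl+(w-\wl)$, bound $\wl$ by Proposition~\ref{prop:impro} and the fluctuation by Theorem~\ref{th:decay}~iii), then compare exponents. Your case analysis is right. For the first item no new variant of the appendix is needed, since Proposition~\ref{prop:enstro} already gives $\|\wl(t)\|_{L^2}^2=o(t^{-1})$ for every $L^2$ datum. The same splitting also covers $\gamma>0$ when $1/2\le\Gamma\le3/2$.

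The genuine gap is $\gamma>0$ with $\Gamma\in(0,1/2)\cup(3/2,5/2]$, together with $t\|w(t)\|_{L^2}^2\to0$ in item~i). There you only give a plan, and it fails exactly at the step you flag. Reduced to its core (formally, or for mollified solutions), multiply the $w$-equation by $w$; the transport term cancels as in \eqref{enereqw}, so there is no need to subtract $\wl$. This gives
\[
\frac{\dd}{\dd t}\|w\|_{L^2}^2+4\chi\|w\|_{L^2}^2\le\chi\|\nabla u\|_{L^2}^2,
\qquad
\|w(t)\|_{L^2}^2\lesssim e^{-2\chi t}\|w(t/2)\|_{L^2}^2+\int_{t/2}^t e^{-4\chi(t-s)}\|\nabla u(s)\|_{L^2}^2\dd s.
\]
The kernel $e^{-4\chi(t-s)}$ only averages over a window of length $O(1)$. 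So the information you actually have, $\int_{t/2}^t\|\nabla u\|_{L^2}^2\lesssim\|z(t/2)\|_{L^2}^2=O(t^{-\Gamma})$, yields only $\|w(t)\|_{L^2}^2=O(t^{-\Gamma})$, with no extra factor $t^{-1}$. What you need is the pointwise bound $\|\nabla u(t)\|_{L^2}^2=O(t^{-1-\Gamma})$, or $o(t^{-1})$ for the first item. A Schonbek-type averaging over $[t/2,t]$ produces this only if $t\mapsto\|\nabla u(t)\|_{L^2}^2$ is essentially non-increasing, that is, from an enstrophy inequality for the \emph{nonlinear} system. Nothing in the paper supplies this: the identity of Proposition~\ref{prop:enstro} is purely linear. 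Moreover, in your inequality the term $\gamma\|\nabla w\|_{L^2}^2$ is simply discarded, so it cannot be the source of the gain. Your target bound $\|w-\wl\|_{L^2}^2\lesssim t^{-1}\sup_{[t/2,t]}\|z-\zl\|_{L^2}^2+\dots$ therefore has no justification.

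The paper does not prove the $\gamma>0$ case from scratch. It uses Theorem~\ref{th:decay}~ii) to get $\|u(t)\|_{L^2}^2=O(t^{-\Gamma})$. It then invokes the implication $\|u(t)\|_{L^2}^2=O(t^{-\Gamma})\Rightarrow\|w(t)\|_{L^2}^2=O(t^{-1-\Gamma})$, together with $t\|w(t)\|_{L^2}^2\to0$ when $\Gamma=0$. Both are established for $\gamma>0$ in \cite{Guterres2,Guterres3}, where the dissipation $\gamma\Delta w$ is used in an essential way. Without that external input, or an actual proof of the pointwise decay of $\|\nabla u(t)\|_{L^2}$, your proposal proves the corollary only for $\gamma=0$, plus the range $1/2\le\Gamma\le3/2$ when $\gamma>0$.
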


\begin{proof}[Proof of Corollary~\ref{cor:decay}]
The case $\gamma>0$ and $\Gamma=0$ is already known, see \cite{Guterres2} and \cite{Guterres3}.
For the case $\gamma>0$ and $0\le \Gamma\le 5/2$, we observe that with our assumptions, by Theorem~\ref{th:decay}, $\|u(t)\|_{L^2}^2=O(t^{-\Gamma})$. But, as proved in~\cite{Guterres2} and \cite{Guterres3},
the implication $\|u(t)\|_{L^2}^2=O(t^{-\Gamma})\;\Rightarrow \|w(t)\|_{L^2}^2=O(t^{-1-\Gamma})$ holds true, so the assertion follows in this case.

It remains to study the case $\gamma=0$.
We start observing that, for any $\Gamma\ge0$, applying Proposition \ref{prop:impro}, we obtain from the assumption
$\|\ul(t)\|_{L^2}^2=O(t^{-\Gamma})$, that
$\|\wl(t)\|_{L^2}^2=O(t^{-1-\Gamma})$.
But $\|w(t)\|_{L^2}^2\le 2\|\wl(t)\|_{L^2}^2+2\|(w-\wl)(t)\|_{L^2}^2$, so the conclusion follows applying Part~iii) of Theorem~\ref{th:decay}.
 \end{proof}

As an illustration,
let us consider the following particular case.
We assume as usual that $\int (1+|\xi|)|\widehat z_0(\xi)|\dd\xi <\infty$ when $\gamma=0$.
Let $1\le p\le 3/2$ and  
let $u_0\in (L^p\cap L^2_\sigma)(\R^3)$. For $w_0$, we put no additional integrability condition, besides $w_0\in L^2(\R^3)$.
From the usual $L^p$-$L^2$ heat kernel estimates we infer, as $t\to+\infty$, $\|e^{\mu t\Delta}u_0\|_{L^2}=O(t^{-(3/2)(1/p-1/2)})$
and $\|\curl e^{\mu t\Delta}w_0\|_{L^2}=O(t^{-1/2})$.
When $6/5\le p\le 3/2$, the latter decay is faster (or equal) than the previous one and
therefore, by Theorem~\ref{th:linear-pro}, $\|\ul(t)\|_{L^2}=O(t^{-(3/2)(1/p-1/2)})$, as $t\to+\infty$.
Corollary~\ref{cor:decay} applies in the range $1/2\le\Gamma=3(1/p-1/2)\le 3/2$ and it follows that, for $6/5\le p\le 3/2$,
as $t\to+\infty$,
\[
\|u(t)\|_{L^2}^2=O(t^{-3(1/p-1/2)})
\quad\text{and}\quad
\|w(t)\|_{L^2}^2=O(t^{-1-3(1/p-1/2)}).
\]
When $1\le p<6/5$, to reach the same conclusion as above, we need
the better integrability condition $w_0\in (L^q\cap L^2)(\R^3)$, with $\frac1q=\frac1p-\frac13$. In particular, for $p=1$, we deduce that, when  
$(u_0,w_0)\in (L^1\cap L^2_\sigma)\times (L^{3/2}\cap L^2)(\R^3)$, then
as $t\to+\infty$,
\[
\|u(t)\|_{L^2}^2=O(t^{-3/2})
\quad\text{and}\quad
\|w(t)\|_{L^2}^2=O(t^{-5/2}).
\]

\section*{Appendix}
\addcontentsline{toc}{section}{Appendix}
\renewcommand{\theequation}{A.\arabic{equation}}
\renewcommand{\thepropappendix}{A.\arabic{propappendix}}

The aim of this appendix is twofold. First, we will prove an enstrophy identity that holds true for solutions of the linear system \eqref{LMP}, see Proposition \ref{prop:enstro} below. This is similar to the 2D enstrophy identity proved in \cite[Lemma 3.1]{brandolese_2d_2024}, except that, due to a crucial cancellation of the nonlinear terms valid in 2D only, the 2D enstrophy identity holds true for the nonlinear system too and not only for the linear system. Next, we show that the enstrophy identity implies the following property: a rate of decay as $t\to\infty$ for the energy of $\ul$ implies a better rate of decay for the energy of $\wl$ and $\nabla\ul$. This implication is the key property that allowed us to simplify the hypothesis of Theorem \ref{th:decay}, where no decay condition on $\|\wl\|_{L^2}$ was required. These two properties are new and important. We put it in a appendix because their proofs are essentially the same as in dimension two, see  Lemma 3.1, Corollary 3.4, Proposition 4.3 and Remark 4.4 from \cite{brandolese_2d_2024}.

We use the notation from Section \ref{linearsect}. We define in addition the following quantity:
\begin{equation*}
\el  :=\hl-\frac12\Omegal
\end{equation*}
and we recall that $\Omegal = \curl  \ul$ and $\hl=\P\wl$. Let us also introduce the following constant:
\begin{equation}
\label{choice-a}
 a:=\frac{\gamma\chi+\mu\chi+2\mu\gamma}{4\chi^2}.
\end{equation}

\begin{propappendix}
\label{prop:enstro}
Let $(u_0,w_0)\in L^2_\sigma(\R^3)\times L^2(\R^3)$
and $\chi,\mu>0$, $\gamma\ge0$.
The following enstrophy identity holds:
\begin{multline}
 \label{enEO2}
 \frac12\frac{\dd }{\dd t}\Bigl(\|\el \|_{L^2}^2+a\|\Omegal\|_{L^2}^2\Bigr)
 +4\chi\|\el \|_{L^2}^2
 +(\gamma+\chi)\bigl\|\nabla\el -\frac{\mu}{2\chi}\nabla\Omegal\bigr\|_{L^2}^2\\
 +\frac{\mu\gamma}{4\chi^2}(\mu+\chi)\|\nabla\Omegal\|_{L^2}^2=0.
\end{multline}
Moreover,
\[
\|\el(t)\|_{L^2}^2=o(t^{-1}),\qquad \|\Omegal(t)\|_{L^2}^2=o(t^{-1}),\qquad
\| \hl(t)\|_{L^2}^2=o(t^{-1}),\qquad
\text{as $t\to+\infty$.}
\]
\end{propappendix}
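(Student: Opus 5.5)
The plan is to derive the identity \eqref{enEO2} from a reformulation of \eqref{LVMP} in the variables $(\el,\Omegal)$, and then to get the $o(t^{-1})$ decay from a monotonicity argument combined with the integrability supplied by the linear energy equality.

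\textbf{Step 1: the system for $(\el,\Omegal)$.} Substituting $\hl=\el+\frac12\Omegal$ into \eqref{LVMP}, I expect
\begin{equation*}
\dert\Omegal=\mu\Delta\Omegal-2\chi\Delta\el,\qquad
\dert\el=(\gamma+\chi)\Delta\el+\frac{\gamma-\mu}{2}\Delta\Omegal-4\chi\el .
\end{equation*}
I will do this computation on the Fourier side, where \eqref{FVL} is a linear ODE in $t$ for each fixed $\xi$.

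\textbf{Step 2: the identity, frequency by frequency.} For fixed $\xi$, I multiply the $\el$-equation by $\overline{\widehat\el}$ and the $\Omegal$-equation by $a\overline{\widehat\Omegal}$, take real parts and add. The coefficient of the cross term $|\xi|^2\,\mathrm{Re}(\widehat\el\cdot\overline{\widehat\Omegal})$ is $\frac{\gamma-\mu}{2}-2\chi a$.

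The constant \eqref{choice-a} is designed to make this cross coefficient equal $-\frac{(\gamma+\chi)\mu}{\chi}$. Indeed, both sides then equal $\frac{\gamma-\mu}2+\mu+\frac{\mu\gamma}{\chi}$. With that coefficient the gradient terms complete to the square $(\gamma+\chi)\bigl|\nabla\el-\frac{\mu}{2\chi}\nabla\Omegal\bigr|^2$.

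The leftover coefficient of $|\nabla\Omegal|^2$ is
\begin{equation*}
a\mu-\frac{(\gamma+\chi)\mu^2}{4\chi^2}=\frac{\mu\gamma(\mu+\chi)}{4\chi^2},
\end{equation*}
which is the last term of \eqref{enEO2}. This gives a pointwise-in-$\xi$ identity.

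\textbf{Step 3: integrating in $\xi$ (the main obstacle).} For $t>0$ I must check that all quantities are integrable in $\xi$. The delicate case is $\gamma=0$: the datum $\Omegal^0=\curl u_0$ lies only in $H^{-1}$, and $\hl$ is not smoothed at high frequencies. I will use \eqref{solou2} together with Proposition~\ref{prop-e} (and the uniform bound on $K$) to check the following for $t\ge t_1>0$:
\begin{itemize}
\item $|\xi|E_{1,1}$ and $|\xi|^2E_{2,1}$ are bounded, so $\Omegal(t)\in L^2$;
\item the terms $|\xi|\times$(the bounds above) still give $\nabla\el-\frac{\mu}{2\chi}\nabla\Omegal\in L^2$ in time-integrated form.
\end{itemize}
If the pointwise bounds turn out too weak for the gradient terms, the fallback is to prove \eqref{enEO2} for frequency-truncated data $\EE_n z_0$, where everything is smooth. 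I would then pass to the limit in the time-integrated form of \eqref{enEO2}, using monotone convergence for the dissipation terms.

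\textbf{Step 4: decay.} Set $F=\|\el\|_{L^2}^2+a\|\Omegal\|_{L^2}^2$; by \eqref{enEO2}, $F$ is nonincreasing on $(0,\infty)$. The linear analogue of \eqref{energ} gives $\int_0^\infty\|\Omegal\|_{L^2}^2=\int_0^\infty\|\nabla\ul\|_{L^2}^2<\infty$. It also gives $\int_0^\infty\|\curl\ul-2\wl\|_{L^2}^2<\infty$.

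Since $\curl\ul-2\wl=-2\el-2\nabla q$ is an $L^2$-orthogonal decomposition into a divergence-free part and a gradient, we get $\|\el\|_{L^2}^2\le\frac14\|\curl\ul-2\wl\|_{L^2}^2$. Hence $\int_0^\infty F<\infty$.

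Monotonicity then yields
\begin{equation*}
\frac t2F(t)\le\int_{t/2}^tF(s)\dd s\to0 ,
\end{equation*}
so $F(t)=o(t^{-1})$. This gives the claims for $\el$ and $\Omegal$, and $\hl=\el+\frac12\Omegal$ gives the claim for $\hl$.
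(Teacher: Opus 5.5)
Your proposal is correct and follows the paper's proof: the same elimination $\hl=\el+\frac12\Omegal$, the same choice of $a$ and completion of the square, and the same monotonicity-plus-time-integrability argument for the $o(t^{-1})$ decay. Your bound $\|\el\|_{L^2}^2\le\frac14\|\curl\ul-2\wl\|_{L^2}^2$, obtained from Helmholtz orthogonality, simply replaces the paper's Young-inequality bound on $\int_0^\infty\|\hl\|_{L^2}^2$. Your Step 3 addresses rigor that the paper skips, but for $\gamma=0$ its second bullet will not follow from the separate bounds of Proposition~\ref{prop-e}: in general $\nabla\Omegal(t)\notin L^2$, and only the combination $\nabla\el-\frac{\mu}{2\chi}\nabla\Omegal$ is controlled. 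What actually does the work is your fallback, which amounts to Tonelli applied to the pointwise-in-$\xi$ identity (its dissipation is nonnegative) once $\|\Omegal(s)\|_{L^2}<\infty$ for $s>0$.
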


\begin{proof}

We have from~\eqref{sysL}
\begin{equation*}
 \begin{cases}
 \dert \Omegal=(\mu+\chi)\Delta \Omegal-2\chi\Delta \hl\\
 \dert \hl=\gamma\Delta \hl+2\chi\Omegal-4\chi \hl.
 \end{cases}
\end{equation*}
Eliminating $\hl=\el +\frac12\Omegal$ from \eqref{LVMP}, we obtain the following new system
\begin{equation*}
\begin{cases}
\dert\Omegal=\mu\Delta\Omegal-2\chi\Delta\el \\ 
 \dert\el =(\gamma+\chi)\Delta\el +\frac{\gamma-\mu}{2}\Delta\Omegal-4\chi\el .
\end{cases}
\end{equation*}
Multiplying the first equation by $\Omegal$ and the second equation by $\el $, we obtain, after a few integrations by parts
\[
\begin{cases}
\frac12\frac{\dd }{\dd t}\|\Omegal\|_{L^2}^2+\mu\int|\nabla \Omegal|^2 -2\chi\int\nabla\Omegal\cdot\nabla\el =0\\
\frac12\frac{\dd }{\dd t}\|\el \|_{L^2}^2+(\gamma+\chi)\int|\nabla \el |^2
+\frac{\gamma-\mu}{2} \int\nabla\Omegal\cdot\nabla\el +4\chi\int|\el |^2=0.
\end{cases}
\]
Multiplying by $a$ the first identity and adding to the second identity
we obtain
\begin{multline}\label{a3bis}
\frac12\frac{\dd }{\dd t}\Bigl(\|\el \|_{L^2}^2+a\|\Omegal\|_{L^2}^2\Bigr)
 +4\chi\int|\el |^2
 + (\gamma+\chi)\int|\nabla\el |^2\\
 +\Bigl(\frac{\gamma-\mu}{2}-2\chi a\Bigr)\int\nabla\el \cdot\nabla\Omegal
 +a\mu\int|\nabla\Omegal|^2=0.
\end{multline}
The non-negativity property
\[
(\gamma+\chi)|\nabla\el |^2
 +\Bigl(\frac{\gamma-\mu}{2}-2\chi a\Bigr)\nabla\el \cdot\nabla\Omegal
 +a\mu|\nabla\Omegal|^2 \ge0
\]
for all $\nabla\el \in \R^{3\times3}$ and all  $\nabla\Omegal\in \R^{3\times3}$,
is guaranteed by the following condition
\begin{equation*}
 \Bigl(\frac{\gamma-\mu}{2}-2\chi a\Bigr)^2-4(\gamma+\chi)a\mu\le0,
\end{equation*}
that can be rewritten  as
\begin{equation*}
 16\chi^2a^2-8(\gamma\chi+\mu\chi+2\mu\gamma)a+(\gamma-\mu)^2\le0.
\end{equation*}
The choice of $a$ given in \eqref{choice-a} is indeed suitable, and corresponds to the minimizer of the above expression. When $\mu\gamma=0$, this is in fact the only possible choice. With this choice of $a$, applying the square reduction yields
\[
\begin{split}
  (\gamma+\chi)\int|\nabla\el |^2&
 +\Bigl(\frac{\gamma-\mu}{2}-2\chi a\Bigr)\int\nabla\el \cdot\nabla\Omegal
 +a\mu\int|\nabla\Omegal|^2\\
 &=(\gamma+\chi)\int|\nabla\el |^2-\frac{\mu(\chi+\gamma)}{\chi}\int\nabla\el \cdot\nabla\Omegal
 +\frac{\mu}{4\chi^2}(\gamma\chi+\mu\chi+2\mu\gamma)\int|\nabla\Omegal|^2\\
 &=(\gamma+\chi)\int\Bigl|\nabla\el -\frac{\mu}{2\chi}\nabla\Omegal\Bigr|^2
 +\frac{\mu\gamma}{4\chi^2}(\mu+\chi)\int|\nabla\Omegal|^2.
 \end{split}
\]
Using this in \eqref{a3bis}  implies identity~\eqref{enEO2}.

The same argument as in the proof of \eqref{E1} allows to prove that the solution to the linear system~\eqref{sysL} satisfies the energy inequality, for all $t\ge0$,
\begin{multline*}
\|\ul(t)\|_{L^2}^2+\|\hl(t)\|_{L^2}^2  +2(\mu+\chi)\int_0^t\|\nabla \ul(s)\|_{L^2}^2\dd s+2\gamma\int_s^t\|\nabla \hl(s)\|_{L^2}^2\dd s
+8\chi\int_0^t\|\hl(s)\|_{L^2}^2\dd s\\
\le   \|u_0\|_{L^2}^2+\|w_0\|_{L^2}^2 +8\chi\int_0^t\|\hl(s)\|_{L^2}\,\|\nabla \ul(s)\|_{L^2}\dd s.
\end{multline*}
But, for any $\eta>0$, we have 
$2\|\hl\|_{L^2}\,\|\nabla \ul\|_{L^2}
\le \frac{1}{\eta}\|\hl\|_{L^2}^2+\eta\|\nabla \ul\|_{L^2}^2$.
Choosing  $\frac12< \eta<\frac{\mu+\chi}{2\chi}$, for example,
\[
\eta=\textstyle\frac14(1+\frac{\mu+\chi}{\chi}),
\]
implies
\begin{multline*}
\|\ul(t)\|_{L^2}^2+\|\hl(t)\|_{L^2}^2
+c_1\int_0^t \|\nabla \ul(s)\|_{L^2}^2\dd s
	+2\gamma\int_0^t \|\nabla \hl(s)\|_{L^2}^2\dd s
+c_2\int_0^t\|\hl(s)\|_{L^2}^2\dd s\\
\le \|u_0\|_{L^2}^2+\|w_0\|_{L^2}^2,
\end{multline*}
with $c_1,c_2>0$ given by
\begin{equation*}
c_1=2(\mu+\chi-2\chi\eta)\quad\text{and}\quad
c_2=2\chi(4-2/\eta).
\end{equation*}
We infer that
\[
\int_0^\infty \big(c_1\|\nabla\ul\|_{L^2}^2
+c_2\|\hl\|_{L^2}^2\bigr)\le\|u_0\|_{L^2}^2+\|w_0\|_{L^2}^2.
\]
Then, recalling the definition of $\el $, we deduce
\begin{equation}\label{elint}
\int_0^\infty \bigl(\|\el \|_{L^2}^2+a\|\Omegal\|_{L^2}^2\bigr)
\lesssim \|u_0\|_{L^2}^2+\|w_0\|_{L^2}^2.
\end{equation}
Now, the enstrophy identity~\eqref{enEO2} implies that
$\frac{\dd }{\dd t}\bigl(\|\el \|_{L^2}^2+a\|\Omegal\|_{L^2}^2\bigr)\le0$,
so the function $t\mapsto \|\el \|_{L^2}^2+a\|\Omegal\|_{L^2}^2$ is monotonically non-increasing.
Then, for all $t>0$, we can bound from below
\[
 \int_{t/2}^{t} \bigl(\|\el \|_{L^2}^2+a\|\Omegal\|_{L^2}^2\bigr)
\geq \frac t2\bigl(\|\el (t)\|_{L^2}^2+a\|\Omegal(t)\|_{L^2}^2\bigr).
\]
Since, by \eqref{elint}, the function $t\mapsto \|\el(t) \|_{L^2}^2+a\|\Omegal(t)\|_{L^2}^2$ belongs to $L^1(\R_+)$, the lhs of the relation above goes to 0 as $t\to\infty$. Then $t\bigl(\|\el (t)\|_{L^2}^2+a\|\Omegal(t)\|_{L^2}^2\bigr)\to0$ as
$t\to+\infty$, so it holds also $t\|\hl(t)\|_{L^2}^2\to0$ as
$t\to+\infty$.
\end{proof}

As an application of this enstrophy estimate, we illustrate how
this estimate can be used to improve the decay of the linear micro-rotation.

\begin{propappendix}\label{prop:impro}
Let $(u_0,w_0)\in L^2_\sigma(\R^3) \times L^2(\R^3)$ and let $\Gamma\ge0$.
If the solution $(\ul,\wl)$ of the linear problem~\eqref{LMP} satisfies $\|\ul(t)\|_{L^2}^2=O(t^{-\Gamma})$ as $t\to+\infty$, then
\begin{equation}\label{a3}
\|\wl(t)\|_{L^2}^2+\|\nabla\ul(t)\|_{L^2}^2=O(t^{-\Gamma-1}),
\qquad\text{as $t\to+\infty$}.
\end{equation}
\end{propappendix}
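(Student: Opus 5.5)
The plan is to combine the monotone enstrophy functional of Proposition~\ref{prop:enstro} with a time-weighted version of the linear energy inequality. Write $F(t):=\|\el(t)\|_{L^2}^2+a\|\Omegal(t)\|_{L^2}^2$. By \eqref{enEO2}, $F$ is non-increasing. We have $\|\nabla\ul\|_{L^2}=\|\Omegal\|_{L^2}$ and $\|\hl\|_{L^2}\le\|\el\|_{L^2}+\frac12\|\Omegal\|_{L^2}$. Moreover, $\wl=\hl-\nabla q$, where by \eqref{soldivw} the gradient part satisfies $\|\nabla q(t)\|_{L^2}\le e^{-4\chi t}\|w_0\|_{L^2}$, which decays exponentially. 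Hence \eqref{a3} reduces to showing $F(t)=O(t^{-\Gamma-1})$.

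\textbf{Step 1: a linear energy inequality on $[s,t]$.} I would redo the energy argument from the proof of Proposition~\ref{prop:enstro} on an arbitrary interval $[s,t]$, for the linear system \eqref{sysL}. This is legitimate since the linear solution is smooth for $t>0$: the Fourier multiplier $K$ is explicit, and for $\gamma=0$ the argument still works because only $\Omegal$, $\hl$ and $\ul$ enter. It gives
\[
\|\ul(t)\|_{L^2}^2+\|\hl(t)\|_{L^2}^2+\int_s^t\bigl(c_1\|\nabla\ul\|_{L^2}^2+c_2\|\hl\|_{L^2}^2\bigr)\le\|\ul(s)\|_{L^2}^2+\|\hl(s)\|_{L^2}^2 .
\]
Since $F\lesssim\|\nabla\ul\|_{L^2}^2+\|\hl\|_{L^2}^2$, taking $s=t/2$ yields
\[
\int_{t/2}^{t}F\lesssim\|\ul(t/2)\|_{L^2}^2+\|\hl(t/2)\|_{L^2}^2 .
\]
By monotonicity of $F$, the left side is at least $\frac t2F(t)$, so
\[
F(t)\lesssim t^{-1}\bigl(\|\ul(t/2)\|_{L^2}^2+\|\hl(t/2)\|_{L^2}^2\bigr).
\]

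\textbf{Step 2: bootstrap on the decay of $\hl$.} The obstacle is that the hypothesis gives decay only for $\ul$, not for $\hl$. From Step~1, $\|\hl(t)\|_{L^2}^2\lesssim F(t)$, so
\[
F(t)\lesssim t^{-1}\bigl(t^{-\Gamma}+F(t/2)\bigr).
\]
Proposition~\ref{prop:enstro} gives the starting bound $F(t)=o(t^{-1})$. I would iterate: if $F(t)=O(t^{-\beta})$, then $F(t)=O(t^{-1-\min(\Gamma,\beta)})$. Each iteration raises the exponent by $1$ until it reaches $\Gamma+1$, after finitely many steps, since $\Gamma$ is finite. To make the constants clean, the iteration can be run on $\sup_{\tau\ge t}\tau^{\beta}F(\tau)$.

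\textbf{Conclusion and main difficulty.} With $F(t)=O(t^{-\Gamma-1})$, the estimate \eqref{a3} follows from the reductions in the first paragraph. I expect the main point needing care to be the rigorous justification of the energy inequality on $[s,t]$ and of the differentiated identity \eqref{enEO2} when $\gamma=0$. I would handle both directly on the Fourier side, using the explicit symbol $K$: the quantities involved are finite for $t>0$, because the factor $e^{-ct}$ controls the high frequencies, which lets one differentiate under the integral.
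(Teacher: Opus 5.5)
Your argument is correct, and it differs from the paper's in how decay is extracted from the enstrophy identity. The paper multiplies \eqref{enEO2} by $2t^{\alpha+1}$ and the linear energy inequality by $t^{\alpha}$, with $\alpha>\Gamma$, and integrates from $0$. This gives $t^{\alpha+1}F(t)\le(\alpha+1)\int_0^t s^\alpha F(s)\dd s$ and $\int_0^t s^\alpha(\|\nabla\ul\|_{L^2}^2+\|\hl\|_{L^2}^2)\dd s=O(t^{\alpha-\Gamma})$, i.e.\ the implication \eqref{impro1}. That implication is then iterated case by case ($\Gamma\le1$, $1<\Gamma\le2$, \dots), starting from $\|\hl\|_{L^2}^2=o(t^{-1})$.

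You instead combine the monotonicity of $F=\|\el\|_{L^2}^2+a\|\Omegal\|_{L^2}^2$ with the unweighted energy inequality on $[t/2,t]$. This is the device the paper uses only to obtain the $o(t^{-1})$ bound in Proposition~\ref{prop:enstro}. It gives the single recursion $F(t)\lesssim t^{-1}\bigl(\|\ul(t/2)\|_{L^2}^2+F(t/2)\bigr)$, which makes the bootstrap uniform in $\Gamma$.

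Your route is more elementary. It needs no weight exponent and no control of a boundary term at $s=0$, where $F$ may blow up since $\Omegal(0)=\curl u_0$ is only in $\dot H^{-1}$. Also, the energy bound $\|\ul\|_{L^2}^2+\|\hl\|_{L^2}^2\le\|z_0\|_{L^2}^2$ already seeds your recursion with $F=O(t^{-1})$, so from Proposition~\ref{prop:enstro} you really only need monotonicity. The paper's weighted version yields time-integrated decay bounds as a byproduct.

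One side remark in your proposal should be corrected. For $\gamma=0$ the linear solution is \emph{not} smooth for $t>0$: $E_{2,2}(\xi,t)\to e^{-4\chi\mu t/(\mu+\chi)}$ as $|\xi|\to\infty$, so $\hl(t)$ is in general no smoother than $h_0$, and there is no gain of regularity. What makes your steps rigorous is that the energy and enstrophy identities hold pointwise in $\xi$ for the ODE \eqref{FVL}. They can then be integrated in $\xi$ and in time by Tonelli, since the dissipative terms are nonnegative. This is essentially what your last paragraph proposes.
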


\begin{proof}
We observe first that \eqref{a3} is equivalent to the following relation:
\begin{equation}
\label{a4}
\|\hl(t)\|_{L^2}^2+\|\nabla\ul(t)\|_{L^2}^2=O(t^{-\Gamma-1}),
\qquad\text{as $t\to+\infty$}.
\end{equation}
Indeed, we have from \eqref{soldivw} that $\hl-\wl$ decays exponentially fast in $L^2$ as $t\to\infty$.

We will show that \eqref{a4} holds true. We start by proving a weaker assertion, namely that
\begin{equation}
\label{impro1}
\forall \,\Gamma\ge0,\quad
\|\ul(t)\|_{L^2}^2+\|\hl(t)\|_{L^2}^2 =O(t^{-\Gamma})
\;\Longrightarrow\;
\begin{cases}
\|\hl(t)\|_{L^2}^2 =O(t^{-\Gamma-1}),\\
\|\nabla \ul(t)\|_{L^2}^2=O(t^{-\Gamma-1})
\end{cases}
\end{equation}
as $t\to+\infty$.

Let $\alpha>\Gamma$.
Multiplying the enstrophy identity~\eqref{enEO2} by $2t^{\alpha+1}$ and integrating by parts in time
gives
\begin{multline*}
 t^{\alpha+1}\Bigl(\|\el \|_{L^2}^2+a\|\Omegal\|_{L^2}^2\Bigr)(t)
 -(\alpha+1)\int_0^t s^\alpha\Bigl(\|\el \|_{L^2}^2+a\|\Omegal\|_{L^2}^2\Bigr)(s)\dd s\\
 +2\int_0^t s^{\alpha+1}\biggl[ 4\chi\|\el \|_{L^2}^2
 +(\gamma+\chi)\int\Bigl|\nabla\el -\frac{\mu}{2\chi}\nabla\Omegal\Bigr|^2
 +\frac{\mu\gamma}{4\chi^2}(\mu+\chi)\int|\nabla\Omegal|^2\biggr]\dd s=0.
\end{multline*}
Hence, for all $t\ge0$.
\begin{equation}
\label{porau}
 t^{\alpha+1}\Bigl(\|\el \|_{L^2}^2
 +a\|\Omegal\|_{L^2}^2\Bigr)(t)
 \le (\alpha+1)\int_0^t s^\alpha\Bigl(\|\el \|_{L^2}^2+a\|\Omegal\|_{L^2}^2\Bigr)(s)\dd s.
\end{equation}
Similarly, from~\eqref{sysL}, multiplying the first and the second equation of the linear micropolar  system by $t^\alpha \ul$ and $t^\alpha\hl$ respectively,
and integrating in space and by parts in time, gives the following variant
of the energy inequality~\eqref{EI}:
 \begin{multline*}
\label{EI}
t^\alpha\Bigl(\|\ul(t)\|_{L^2}^2+\|\hl(t)\|_{L^2}^2\Bigr)
+\int_0^t s^\alpha\biggl[ c_1\|\nabla \ul(s)\|_{L^2}^2
+c_2\|\hl(s)\|_{L^2}^2\biggr]\dd s\\
\le \alpha\int_0^t s^{\alpha-1}\Bigl(\|\ul(s)\|_{L^2}^2+\|\hl(s)\|_{L^2}^2\Bigr)\dd s,
\end{multline*}
Therefore, the condition $\|\ul(t)\|_{L^2}^2+\|\hl(t)\|_{L^2}^2 =O(t^{-\Gamma})$
implies
\[
\int_0^t s^\alpha\Bigl( \|\nabla \ul(s)\|_{L^2}^2
	+\|\hl(s)\|_{L^2}^2\Bigr)\dd s=O(t^{\alpha-\Gamma})
	\qquad\text{as $t\to+\infty$}.
\]
But, by the definition of $\el $ and the triangular inequality,
we have $s^\alpha(\|\el \|_{L^2}^2+a\|\Omegal\|_{L^2}^2)
\lesssim s^\alpha(\|\nabla\ul\|_{L^2}^2+\|\hl\|_{L^2}^2)$,
hence, from~\eqref{porau},
\begin{equation*}
 t^{\alpha+1}\Bigl(\|\el \|_{L^2}^2+a\|\Omegal\|_{L^2}^2\Bigr)(t)
 = O(t^{\alpha-\Gamma}).
\end{equation*}
But, since $\|\Omegal\|_{L^2}=\|\nabla\ul\|_{L^2}$, by the triangular inequality we deduce
$\|\hl\|_{L^2}^2
+\|\nabla \ul\|_{L^2}^2\lesssim \|\el \|_{L^2}^2+a\|\Omegal\|_{L^2}^2$ and finally
\[
\|\hl\|_{L^2}^2+\|\nabla \ul\|_{L^2}^2=O(t^{-\Gamma-1}).
\]
This establishes the validity of the implication~\eqref{impro1}.

Now, to prove the assertion of Proposition~\ref{prop:impro}, we start considering
the case $0\le \Gamma\le 1$ and assume $\|\ul(t)\|_{L^2}^2=O(t^{-\Gamma})$ as 
$t\to+\infty$.
By Proposition~\ref{prop:enstro}, we have $\|\hl(t)\|_{L^2}^2=o(t^{-1})=O(t^{-\Gamma})$ as $t\to+\infty$, and the assertion of Proposition~\ref{prop:impro}
follows from implication~\eqref{impro1} in this case.
In the case $1< \Gamma\le 2$, from the assumption 
$\|\ul(t)\|_{L^2}^2=O(t^{-\Gamma})$ we have $\|\ul(t)\|_{L^2}^2=O(t^{-1})$ as $t\to+\infty$.
But we have also $\|\hl(t)\|_{L^2}^2=O(t^{-1})$ as $t\to+\infty$, by Proposition~\ref{prop:enstro}. Hence, from implication~\eqref{impro1},
$\|\hl(t)\|_{L^2}^2=O(t^{-2})=O(t^{-\Gamma})$ as $t\to+\infty$.
Now, re-applying implication~\eqref{impro1} gives the conclusion for $1<\Gamma\le 2$. The case $\Gamma>2$ follows by bootstrapping.
\end{proof}

\noindent
\begin{small}
\textbf{Acknowledgments.}
The authors were partially supported by CAPES/COFECUB project N.1040-24 WINPDE.
L. Brandolese was partially supported by ANR-25-CE40-4532.
C. F. Perusato was partially supported by CNPq grant No. 200124/2024-2, CNPq research fellowship No. 310444/2022-5 and by Universit\'e Jean-Monnet.
\end{small} 

\bibliographystyle{myabbrv}
\bibliography{zotero}

\medskip

\begin{description}
\item[Lorenzo Brandolese] 
		Université Lyon 1, ICJ UMR5208, 69622 Villeurbanne, France.\\
Email: \href{mailto:brandolese@math.univ-lyon1.fr}{brandolese@math.univ-lyon1.fr}
\item[Pablo Braz e Silva] Departamento de Matem\'atica. 
		Universidade Federal de Pernambuco, Recife, PE 50740-560. Brazil\\
Email: \href{mailto:pablo.braz@ufpe.br}{pablo.braz@ufpe.br}
\item[Adriana Valentina Busuioc] Université Jean Monnet, CNRS, Centrale Lyon, INSA Lyon, Université  Lyon 1, ICJ UMR5208, 42023 Saint-Etienne, France. \\
Email: \href{mailto:valentina.busuioc@univ-st-etienne.fr}{valentina.busuioc@univ-st-etienne.fr}\
Web page: \url{https://perso.univ-st-etienne.fr/busuvale/}
\item[Dragoş Iftimie] Université Lyon 1, ICJ UMR5208, 69622 Villeurbanne, France.\\
Email: \href{mailto:iftimie@math.univ-lyon1.fr}{iftimie@math.univ-lyon1.fr}\
Web page: \url{http://math.univ-lyon1.fr/~iftimie/}
\item[Cilon F. Perusato] Departamento de Matem\'atica. 
		Universidade Federal de Pernambuco, Recife, PE 50740-560. Brazil \\
Email: \href{mailto:cilon.perusato@ufpe.br}{cilon.perusato@ufpe.br}
\ Web page: \url{http://cilonperusato.com}

\end{description}

\end{document}